\theoremstyle{plain}
\newtheorem{theorem}{Theorem}[section]
\newtheorem{lemma}[theorem]{Lemma}
\theoremstyle{definition}
\newtheorem{remark}[theorem]{Remark}
\numberwithin{equation}{section}
\newcommand{\ba}{{\bf a}}
\newcommand{\bb}{{\bf b}}
\newcommand{\bc}{{\bf c}}
\newcommand{\be}{{\bf e}}
\newcommand{\bx}{{\bf x}}
\newcommand{\bn}{{\bf n}}
\newcommand{\bu}{{\bf u}}
\newcommand{\bv}{{\bf v}}
\newcommand{\bw}{{\bf w}}
\newcommand{\bV}{{\bf V}}
\newcommand{\bC}{{\bf C}}
\newcommand{\bD}{{\bf D}}
\newcommand{\bbf}{{\bf f}}
\newcommand{\Pih}{\Pi_h}
\newcommand{\Pz}{\mathcal{P}_0}
\newcommand{\cR}{\mathcal{R}}
\newcommand{\jump}[1]{[ #1 ]}
\newcommand{\avg}[1]{\{ #1\}}
\newcommand{\Hone}{H^1(\Omega)}
\newcommand{\Honezd}{[H^1_0(\Omega)]^d}
 \newcommand{\Ltwoz}{L^2_0(\Omega)}
 \newcommand{\Vh}{\bV_h}
  \newcommand{\Ch}{\bC_h}
  \newcommand{\Dh}{\bD_h}
\newcommand{\Th}{\mathcal{T}_h}
\newcommand{\Eh}{\mathcal{E}_h}
\newcommand{\Eho}{\mathcal{E}_h^o}
\newcommand{\Ehb}{\mathcal{E}_h^b}
\newcommand{\norm}[1]{\lVert #1\rVert}
\newcommand{\enorm}[1]{\lVert #1\rVert_{\mathcal{E}}}
\newcommand{\snorm}[1]{|#1|}
\newcommand{\trinorm}[1]{{\left\vert\kern-0.25ex\left\vert\kern-0.25ex\left\vert #1 \right\vert\kern-0.25ex\right\vert\kern-0.25ex\right\vert}}
\newcommand{\buh}{\bu_h}
\newcommand{\ph}{p_h}
\title{A uniform and pressure-robust enriched Galerkin method for the Brinkman equations\thanks{Lin Mu's work was supported in part by the U.S. National Science Foundation grant DMS-2309557.}
}
\author{Seulip Lee,\thanks{Department of Mathematics, University of Georgia, Athens, GA 30602 (\texttt{seulip.lee@uga.edu})}
\and Lin Mu,\thanks{Department of Mathematics, University of Georgia, Athens, GA 30602 (\texttt{linmu@uga.edu})}}
\date{%	\today
}
\begin{document}
	\maketitle
	
	\begin{abstract}
		This paper presents a pressure-robust enriched Galerkin (EG) method for the Brinkman equations with minimal degrees of freedom based on EG velocity and pressure spaces. The velocity space consists of linear Lagrange polynomials enriched by a discontinuous, piecewise linear, and mean-zero vector function per element, while piecewise constant functions approximate the pressure. We derive, analyze, and compare two EG methods in this paper: standard and robust methods. The standard method requires a mesh size to be less than a viscous parameter to produce stable and accurate velocity solutions, which is impractical in the Darcy regime. Therefore, we propose the pressure-robust method by utilizing a velocity reconstruction operator and replacing EG velocity functions with a reconstructed velocity. The robust method yields error estimates independent of a pressure term and shows uniform performance from the Stokes to Darcy regimes, preserving minimal degrees of freedom. We prove well-posedness and error estimates for both the standard and robust EG methods. We finally confirm theoretical results through numerical experiments with two- and three-dimensional examples and compare the methods' performance to support the need for the robust method.
		
		\vskip 10pt
		\noindent\textbf{Keywords:} enriched Galerkin finite element methods, Brinkman equations, pressure-robust, velocity reconstruction, uniform performance
	\end{abstract}

%%%%%%%%%%%%%%%%%%%%%%%%%%%%%%%%%%
% 1. Introduction
%%%%%%%%%%%%%%%%%%%%%%%%%%%%%%%%%%

\section{Introduction}
\label{sec:intro}
We consider the stationary Brinkman equations in a bounded domain $\Omega\subset\mathbb{R}^d$ for $d=2,3$ with simply connected Lipschitz boundary $\partial\Omega$: Find fluid velocity $\bu:\Omega\rightarrow \mathbb{R}^d$ and pressure $p:\Omega\rightarrow\mathbb{R}$ such that
\begin{subequations}\label{sys:governing}
\begin{alignat}{2}
-\mu\Delta \mathbf{u} + \frac{\mu}{K} \mathbf{u} + \nabla p &= \mathbf{f}\quad\text{ in }\Omega,\label{eqn: governing1}\\
\nabla\cdot\mathbf{u} &= 0\quad\text{ in }\Omega,\label{eqn: governing2}\\
\bu &=0\quad\text{ on } \partial\Omega\label{eqn: governing3},
\end{alignat}
\end{subequations}
where $\mu$ is fluid viscosity, $K$ is media permeability, and $\bbf$ is a given body force.
The Brinkman equations describe fluid flow in porous media characterized by interconnected pores that allow for the flow of fluids, considering both the viscous forces within the fluid and the resistance from the porous media. The Brinkman equations provide a mathematical framework for studying and modeling complex phenomena such as groundwater flow, multiphase flow in oil reservoirs, blood flow in biological tissues, and pollutant transport in porous media.
In this paper, for simplicity, we consider the scaled Brinkman equations
\begin{subequations}\label{sys: scaled_Brinkman}
\begin{alignat}{2}
-\nu\Delta \mathbf{u} + \mathbf{u} + \nabla p &= \mathbf{f}\quad\text{ in }\Omega,\label{eqn: scaled_Brinkman1}\\
\nabla\cdot\mathbf{u} &= 0\quad\text{ in }\Omega, \label{eqn: scaled_Brinkman2}\\
\bu &=0\quad\text{ on } \partial\Omega \label{eqn: scaled_Brinkman3},
\end{alignat}
\end{subequations}
where $\nu\in[0,1]$ is a viscous parameter.
Mathematically, the Brinkman equations can be seen as a combination of the Stokes and Darcy equations.
When $\nu\rightarrow1$, the Brinkman equations approach a Stokes regime affected by the viscous forces, so standard mixed formulations require the $H^1$-conformity for velocity.
On the other hand, since the Darcy model becomes more prominent as $\nu\rightarrow 0$, finite-dimensional spaces for velocity are forced to satisfy the $H(\text{div})$-conformity.
This compatibility in velocity spaces makes it challenging to construct robust numerical solvers for the Brinkman equations in both the Stokes and Darcy regimes.
The numerical tests in \cite{hannukainen2011computations,mardal2002robust} show that standard mixed methods with well-known inf-sup stables Stokes elements, such as MINI and Taylor-Hood elements,  produce suboptimal orders of convergence in the Darcy regime.
Moreover, with piecewise constant approximations for pressure, the standard methods' velocity errors do not converge in the Darcy regime, while mesh size decreases.
On the other hand, Darcy elements such as Raviart-Thomas and Brezzi-Douglas-Marini do not work for the Stokes domain because they do not satisfy the $H^1$-conformity.
Therefore, the development of robust numerical solvers for the Brinkman equations has had considerable attention.

There have been three major categories in developing robust numerical methods for the Brinkman equations. The first category considers Stokes/Darcy elements and adds stabilization (or penalty) terms or degrees of freedom to impose normal/tangential continuity, respectively. This approach allows Stokes elements to cover the Darcy regime \cite{burman2005stabilized,xie2008uniformly} or $H(\text{div})$-conforming finite elements to be extended to the Stokes regime \cite{konno2010non,konno2012numerical,xie2008uniformly,johnny2012family}. Also, the stabilized method in \cite{burman2008pressure} coarsens a pressure space and applies a stabilization term on pressure, while the robust method in \cite{mardal2002robust} uses an enlarged velocity space. The second approach is to introduce another meaningful unknown and define its suitable formulation and finite-dimensional space, such as velocity gradient \cite{zhao2020new,fu2019parameter,gatica2015analysis,howell2016dual}, vorticity \cite{correa2009unified,anaya2015augmented,vassilevski2014mixed}, and Lagrange multipliers at elements' boundaries \cite{jia2021hybridized}. The third direction is the development of a velocity reconstruction operator, first introduced in \cite{linke2012divergence}, mapping Stokes elements into an $H(\text{div})$-conforming space. In a discrete problem for the Brinkman equations, reconstructed velocity functions replace Stokes elements in the Darcy term and the test function on the right-hand side. This idea has been adopted for a uniformly robust weak Galerkin method for the Brinkman equations \cite{mu2020uniformly}, which inspires our work because of its simplicity in modification.

Our research focuses on developing a robust numerical method for the Brinkman equations with minimal degrees of freedom. The enriched Galerkin (EG) velocity and pressure spaces have been proposed by \cite{YiEtAl22Stokes} for solving the Stokes equations with minimal degrees of freedom. The velocity space consists of linear Lagrange polynomials enriched by a discontinuous, piecewise linear, and mean-zero vector function per element, while piecewise constant functions approximate the pressure. More precisely, a velocity function $\bv=\bv^C+\bv^D$ consists of a continuous linear Lagrange polynomial $\bv^C$ and a discontinuous piecewise linear enrichment function $\bv^D$, so interior penalty discontinuous Galerkin (IPDG) formulations are adopted to remedy the discontinuity of $\bv^D$. These velocity and pressure spaces satisfy the inf-sup condition for the Stokes equations, so they are stable Stokes elements.
We first observe a standard EG method derived from adding the Darcy term $(\bu,\bv)_\Omega$ to the Stokes discrete problem in \cite{YiEtAl22Stokes}.
Our numerical analysis and experiments show that the standard EG method provides stable solutions and convergent errors for the Brinkman equations if a mesh size satisfies the condition $h<\sqrt{\nu}$ that is impractical in the Darcy regime ($\nu\rightarrow0$). Hence, inspired by \cite{mu2020uniformly}, we use the velocity reconstruction operator \cite{HuLeeMuYi} mapping the EG velocity to the first-order Brezzi-Douglas-Marini space, whose consequent action is preserving the continuous component $\bv^C$ and mapping only the discontinuous component $\bv^D$ to the lowest-order Raviart-Thomas space. Then, we replace the EG velocity in the Darcy term and the test function on the right-hand side with the reconstructed linear $H(\text{div})$-conforming velocity.
Therefore, with this simple modification, our resulting EG method yields pressure-robust error estimates and shows uniform performance from the Stokes to Darcy regime without any restriction in a mesh size, which is verified by our numerical analysis and experiments. Through two- and three-dimensional examples, we compare the numerical performance of our robust EG and the standard EG methods with the viscous parameter $\nu$ and mesh size $h$. The numerical results demonstrate why the standard EG method is not suitable for the Brinkman equations in the Darcy regime and show that the robust EG method has uniform performance in solving the Brinkman equations.

The remaining sections of this paper are structured as follows:
Some important notations and definitions are introduced in Section~\ref{sec:prelim}.
In Section~\ref{sec:egmethods}, we introduce the standard and robust EG methods for the Brinkman equations, recalling the EG velocity and pressure spaces \cite{YiEtAl22Stokes} and the velocity reconstruction operator \cite{HuLeeMuYi}.
We prove the well-posedness and error estimates of the standard EG method in Section~\ref{sec:steg}.
In Section~\ref{sec:ureg}, we show the robust method's well-posedness and error estimates that mathematically verify the uniform performance from the Stokes to Darcy regimes.
Section~\ref{sec:num_exp} validates our theoretical results through numerical
experiments in two and three dimensions. Finally, we summarize our contribution in this paper and discuss
related future research in Section~\ref{sec:conclusion}.

%%%%%%%%%%%%%%%%%%%%%%%%%%%%%%%%%%
% 2. Preliminaries
%%%%%%%%%%%%%%%%%%%%%%%%%%%%%%%%%%

\section{Preliminaries}
\label{sec:prelim}
In this section, we introduce some notations and definitions used in this paper.
For a bounded Lipschitz domain $\mathcal{D}\in\mathbb{R}^d$, where $d=2,3$, we denote the Sobolev space as $H^s(\mathcal{D})$ for a real number $s\geq 0$.
Its norm and seminorm are denoted by $\|\cdot\|_{s,\mathcal{D}}$ and $|\cdot|_{s,\mathcal{D}}$, respectively.
The space $H^0(\mathcal{D})$ coincides with $L^2(\mathcal{D})$, and the $L^2$-inner product is denoted by $(\cdot,\cdot)_\mathcal{D}$.
When $\mathcal{D}=\Omega$, the subscript $\mathcal{D}$ will be omitted.
This notation is generalized to vector- and tensor-valued Sobolev spaces.
The notation $H_0^1(\mathcal{D})$ means the space of $v\in H^1(\mathcal{D})$ such that $v=0$ on $\partial\mathcal{D}$, and $L_0^2(\mathcal{D})$ means the space of $v\in L^2(\mathcal{D})$ such that $(v,1)_\mathcal{D}=0$.
The polynomial spaces of degree less than or equal to $k$ are denoted as $P_k(\mathcal{D})$.
We also introduce the Hilbert space
\begin{equation*}
    H(\text{div},\mathcal{D}):=\{\bv\in [L^2(\mathcal{D})]^d:\text{div}\;\bv\in L^2(\mathcal{D})\}
\end{equation*}
with the norm
\begin{equation*}
    \norm{\bv}_{H(\text{div},\mathcal{D})}^2:=\norm{\bv}_{0,\mathcal{D}}^2+\norm{\text{div}\;\bv}_{0,\mathcal{D}}^2.
\end{equation*}

For discrete setting, we assume that there exists a shape-regular triangulation $\Th$ of $\Omega$ whose elements $T\in \Th$ are triangles in two dimensions and tetrahedrons in three dimensions.
Also, $\Eh$ denotes the collection of all edges/faces in $\Th$, and $\Eh=\Eho\cup\Ehb$, where $\Eho$ is the collection of all the interior edges/faces and $\Ehb$ is that of the boundary edges/faces.
For each element $T\in\Th$, let $h_T$ denote the diameter of $T$ and $\bn_T$ (or $\bn$) denote the outward unit normal vector on $\partial T$.
For each interior edge/face $e\in \Eho$ shared by two adjacent elements $T^+$ and $T^-$, we let $\bn_e$ be the unit normal vector from $T^+$ to $T^-$.
For each $e\in\Ehb$, $\bn_e$ denotes the outward unit normal vector on $\partial \Omega$.
In a triangulation $\Th$, the broken Sobolev space is defined as
\begin{equation*}
    H^s(\Th):=\{v\in L^2(\Omega):v|_T\in H^s(T),\ \forall T\in\Th\},
\end{equation*}
equipped with the norm
\begin{equation*}
    \norm{v}_{s,\Th}:=\left(\sum_{T\in\Th} \norm{v}^2_{s,T}\right)^{1/2}.
\end{equation*}
When $s=0$, the $L^2$-inner product on $\Th$ is denoted by $(\cdot,\cdot)_{\Th}$.
Also, the $L^2$-inner product on $\Eh$ is denoted as $\langle\cdot,\cdot\rangle_{\Eh}$, and the $L^2$-norm on $\Eh$ is defined as
\begin{equation*}
    \norm{v}_{0,\Eh}:=\left(\sum_{e\in\Eh} \norm{v}^2_{0,e}\right)^{1/2}.
\end{equation*}
The piecewise polynomial space corresponding to the broken Sobolev space is defined as
\begin{equation*}
    P_k(\Th) = \{v\in L^2(\Omega): v|_T\in P_k(T),\ \forall T\in\Th\}.
\end{equation*}
In addition, the jump and average of $v$ on $e\in \Eh$ are defined as
\begin{equation*}
    \jump{v}:=\left\{\begin{array}{cl}
        v^+-v^- & \text{on}\ e\in \Eho, \\
        v & \text{on}\ e\in\Ehb,
    \end{array}\right.
    \quad
    \avg{v}:=\left\{\begin{array}{cl}
        (v^++v^-)/2 & \text{on}\ e\in \Eho, \\
        v & \text{on}\ e\in\Ehb,
    \end{array}\right.
\end{equation*}
where $v^{\pm}$ is the trace of $v|_{T^\pm}$ on $e\in \partial T^+\cap\partial T^-$. These definitions are extended to vector- and tensor-valued functions.
We finally introduce the trace inequality that holds for any function $v\in H^1(T)$,
\begin{equation}
    \norm{v}_{0,e}^2\leq C\left(h_T^{-1}\norm{v}_{0,T}^2+h_T\norm{\nabla v}_{0,T}^2\right).\label{eqn: trace}
\end{equation}

%%%%%%%%%%%%%%%%%%%%%%%%%%%%%%%%%%
% 3. Enriched Galerkin Methods for the Brinkman equations
%%%%%%%%%%%%%%%%%%%%%%%%%%%%%%%%%%

\section{Enriched Galerkin Methods for the Brinkman equations}
\label{sec:egmethods}

We first introduce the enriched Galerkin (EG) finite-dimensional velocity and pressure spaces \cite{YiEtAl22Stokes}.
The space of continuous components for velocity is
\begin{equation*}
\Ch = \{\bv^C \in \Honezd : \bv^C|_{T} \in [P_1(T)]^d,\ \forall T \in \Th \}.
\end{equation*}
The space of discontinuous components for velocity is defined as
\begin{equation*}
    \Dh = \{\bv^D \in L^2(\Omega) : \bv^D|_{T} = c (\bx - \bx_T),\ c \in \mathbb{R},\ \forall T \in \Th\},
\end{equation*}
where $\bx_T$ is the barycenter of $T\in\Th$.
Thus, the EG finite-dimensional velocity space is defined as
\begin{equation*}
    \Vh := \Ch \oplus \Dh.
\end{equation*}
We note that any function $\bv\in\Vh$ consists of unique continuous and discontinuous components, $\bv=\bv^C+\bv^D$ for $\bv^C\in\Ch$ and $\bv^D\in\Dh$.
At the same time, the EG pressure space is
\begin{equation*}
    Q_h := \{ q \in \Ltwoz : q|_T \in P_0(T),\ \forall T \in \Th \}.
\end{equation*}
Therefore, we formulate a standard EG method for the Brinkman equations with the pair of the EG spaces $\Vh\times Q_h$ by adding the Darcy term to the Stokes formulation \cite{YiEtAl22Stokes}.
\begin{algorithm}[H]
\caption{Standard enriched Galerkin (\texttt{ST-EG}) method} \label{alg:EG}
Find $( \bu_h, \ph) \in \Vh \times Q_h $ such that
\begin{subequations}\label{sys: eg}
\begin{alignat}{2}
\nu\ba(\buh,\bv) +\bc(\buh,\bv)  - \bb(\bv, \ph) &= (\bbf,  \bv) &&\quad \forall \bv \in\Vh, \label{eqn: eg1}\\
\bb(\buh,q) &= 0 &&\quad \forall q\in Q_h,  \label{eqn: eg2}
\end{alignat}
\end{subequations}
where 
\begin{subequations}\label{sys: bilinear}
\begin{align}
\ba(\bv,\bw) &:= (\nabla \bv,\nabla \bw)_{\Th} -  \langle \avg{\nabla\bv} \bn_e, \jump{\bw} \rangle_{\Eh} \nonumber\\
&\qquad\qquad\qquad\qquad- \langle \avg {\nabla\bw} \bn_e,\jump{\bv}\rangle_{\Eh}  +  \rho_1 \langle h_e^{-1}\jump{\bw},
\jump{\bv}\rangle_{\Eh}, \label{eqn: bia} \\
\bc(\bv,\bw)&:=(\bv,\bw)_{\Th}+\rho_2 \langle h_e\jump{\bw},
\jump{\bv}\rangle_{\Eh},\label{eqn: EGbic}\\
\bb(\bw,q)&:= (\nabla\cdot\bw, q)_{\Th} - \langle \jump{\bw}\cdot\bn_e,\avg{q} \rangle_{\Eh}.  \label{eqn: bib}
\end{align}
\end{subequations}
In this case, $\rho_1 >0$ is an $H^1$-penalty parameter, $\rho_2>0$ is an $L^2$-penalty parameter, and $h_e = |e|^{1/(d-1)}$, where $|e|$ is the length/area of the edge/face $e \in \Eh$. 
\end{algorithm}

\begin{remark}
This algorithm employs interior penalty discontinuous Galerkin (IPDG) formulations because any EG velocity function in $\mathbf{V}_h$ has a discontinuity.
IPDG formulations include two penalty terms scaled by $h_e$ with the penalty parameters $\rho_1$ and $\rho_2$.
The \texttt{ST-EG} method provides reliable numerical solutions in the Stokes regime.
However, this approach may not be effective in solving the Brinkman equations in the Darcy regime because it requires
$H(\text{div})$-conforming discrete velocity functions. Moreover, the \texttt{ST-EG} method's velocity error bounds may depend on a pressure term inversely proportional to $\nu$.
\end{remark}

For this reason, we develop a pressure-robust EG method that produces stable and accurate solutions to Brinkman problems with any value of $\nu\in(0,1]$. 
First, the velocity reconstruction operator \cite{HuLeeMuYi} is defined as $\cR: \Vh \to\mathcal{B}DM_1(\Th)\subset H(\text{div},\Omega)$ such that
\begin{subequations}\label{sys: BDM}
\begin{alignat}{2}
\int_e (\cR \bv) \cdot\bn_e  p_1\  ds & = \int_e \avg{\bv}\cdot\bn_e p_1 \ ds,
 && \quad \forall p_1 \in P_1(e), \ \forall e \in \Eho,  \\
\int_e (\cR \bv) \cdot\bn_e  p_1\  ds & = 0,  && \quad \forall p_1 \in P_1(e), \ \forall e \in \Ehb,
\end{alignat}
\end{subequations}
where $\mathcal{B}DM_1(\Th)$ is the Brezzi-Douglas-Marini space of index 1 on $\Th$.
Then, we propose the pressure-robust EG method as follows.

\begin{algorithm}[H]
\caption{Pressure-robust enriched Galerkin (\texttt{PR-EG}) method} \label{alg:UREG}
Find $( \bu_h, \ph) \in \Vh \times Q_h $ such that
\begin{subequations}\label{sys: robust-eg}
\begin{alignat}{2}
\nu\ba(\buh,\bv) +\tilde{\mathbf{c}}(\buh,\bv)  - \bb(\bv, \ph) &= (\bbf,  \cR\bv) &&\quad \forall \bv \in\Vh, \label{eqn: robust-eg1}\\
\bb(\buh,q) &= 0 &&\quad \forall q\in Q_h,  \label{eqn: robust-eg2}
\end{alignat}
\end{subequations}
where $\ba(\bv,\bw)$ and $\bb(\bv,\bw)$ are defined in \eqref{eqn: bia} and \eqref{eqn: bib}, respectively, and
\begin{equation}
\tilde{\mathbf{c}}(\bv,\bw):=(\cR\bv,\cR\bw)_{\Th}+\rho_2 \langle h_e\jump{\bw},
\jump{\bv}\rangle_{\Eh}.\label{eqn: robust-bic}
\end{equation}
\end{algorithm}

\begin{remark}
    Using the velocity reconstruction operator $\cR$, we force discrete velocity functions in $\Vh$ to be $H(\text{div})$-conforming.
We replace the velocity functions in the bilinear form $(\bv,\bw)_{\Th}$ in \eqref{eqn: EGbic} and the right-hand side with the reconstructed velocity $\cR\bv$.
Thus, the term $(\cR\bv,\cR\bw)_{\Th}$ with the $H(\text{div})$-conforming velocity dominates the \texttt{PR-EG} formulation when $\nu$ approaches to 0 (the Darcy regime).
Moreover, the reconstructed velocity on the right-hand side allows us to obtain error bounds independent of a pressure term inversely proportional to $\nu$.
\end{remark}

%%%%%%%%%%%%%%%%%%%%%%%%%%%%%%%%%%
% 4. Well-Posedness and Error Analysis for ST-EG
%%%%%%%%%%%%%%%%%%%%%%%%%%%%%%%%%%

\section{Well-Posedness and Error Analysis for ST-EG (Algorithm~\ref{alg:EG})}
\label{sec:steg}

First of all, we introduce the discrete $H^1$-norm in \cite{YiEtAl22Stokes} for all $\bv\in\Vh$,
\begin{equation*}
    \enorm{\bv}^2 := \norm{\nabla \bv}_{0, \Th}^2 + \rho_1 \norm{h_e^{-1/2}  \jump{\bv}}_{0, \Eh}^2,
\end{equation*}
where $\rho_1$ is an $H^1$-penalty parameter. With this norm, the coercivity and continuity results for the bilinear form $\ba(\cdot,\cdot)$ have been proved in \cite{YiEtAl22Stokes}: For a sufficiently large $H^1$-penalty parameter $\rho_1$, there exist positive constants $\kappa_1$ and $\kappa_2$ independent of $\nu$ and $h$ such that
\begin{alignat}{2}
\ba(\bv, \bv) & \ge \kappa_1 \enorm{\bv}^2 && \quad \forall \bv \in \Vh, \label{eqn: ba_coer} \\
|\ba(\bv, \bw)| & \leq \kappa_2 \enorm{\bv}\enorm{\bw} && \quad \forall \bv, \bw \in \Vh. \label{eqn: ba_conti}
\end{alignat}
Then, we define an energy norm for Brinkman problems involving the discrete $H^1$-norm and $L^2$-norm,
\begin{equation*}
    \trinorm{\bv}^2 := \nu\enorm{\bv}^2 + \norm{\bv}_{0}^2 +\rho_2 \norm{h_e^{1/2}  \jump{\bv}}_{0, \Eh}^2.
\end{equation*}
In this case, $\rho_2$ is an $L^2$-penalty parameter that should be sufficiently large for well-posedness, and its simple choice is $\rho_2=\rho_1$.
The following lemma shows an essential norm equivalence between $\trinorm{\cdot}$ and $\enorm{\cdot}$ scaled by $\nu$ and $h$.

\begin{lemma} \label{lemma: norm_equiv}
For given $\nu$ and $h$, we define a positive constant $C_{\textsc{ne}}$ (Norm Equivalence) as
\begin{equation*}
C_{\textsc{ne}}:=C\sqrt{\nu+h^2\left(\frac{\rho_2}{\rho_1}+1\right)},
\end{equation*}
where $C$ is a generic positive constant independent of $\nu$ and $h$.
Then, the following norm equivalence holds: For any $\bv\in\Vh$, we have
\begin{equation}\label{eqn: norm_equiv}
\sqrt{\nu}\enorm{\bv}\leq\sqrt{\nu+c_1 h^2}\enorm{\bv}\leq\trinorm{\bv}\leq C_{\textsc{ne}}\enorm{\bv},
\end{equation}
for some small $0<c_1<1$. Moreover, the constant $C_{\textsc{ne}}$ is bounded as
\begin{equation}\label{eqn: bound_cnb}
    C_{\textsc{ne}}\leq C( \sqrt{\nu}+h)
\end{equation}
for some generic constant $C>0$.

\end{lemma}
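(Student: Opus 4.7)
The plan is to establish the three inequalities in \eqref{eqn: norm_equiv} in turn, and then deduce \eqref{eqn: bound_cnb} as an elementary consequence. The first bound is immediate since $c_1 h^2\geq 0$, so the real work is in the middle and right-hand inequalities.

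For the middle inequality $\sqrt{\nu + c_1 h^2}\,\enorm{\bv}\leq \trinorm{\bv}$, I would expand both sides and reduce to proving
\[
c_1 h^2\, \enorm{\bv}^2 \leq \norm{\bv}_0^2 + \rho_2\, \norm{h_e^{1/2}\jump{\bv}}_{0,\Eh}^2,
\]
and then handle the two pieces of $\enorm{\bv}^2$ separately. For the gradient contribution I would apply the elementwise inverse inequality $\norm{\nabla\bv}_{0,T}\leq C h_T^{-1}\norm{\bv}_{0,T}$ (valid because $\bv|_T\in[P_1(T)]^d$), sum over $T\in\Th$, and invoke shape regularity so $h\sim h_T$, obtaining $h^2\norm{\nabla\bv}_{0,\Th}^2\leq C\norm{\bv}_0^2$. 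For the jump contribution, the bound $h^2 h_e^{-1}\leq C h_e$ (from $h_e\leq Ch$) gives $h^2\rho_1\norm{h_e^{-1/2}\jump{\bv}}_{0,\Eh}^2 \leq C(\rho_1/\rho_2)\,\rho_2\norm{h_e^{1/2}\jump{\bv}}_{0,\Eh}^2$. Choosing $c_1\in(0,1)$ small enough (depending only on the inverse/shape-regularity constants and $\rho_1/\rho_2$) absorbs both contributions into the right-hand side.

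For the upper bound $\trinorm{\bv}\leq C_{\textsc{ne}}\enorm{\bv}$, I would estimate each piece of $\trinorm{\bv}^2$. The term $\nu\enorm{\bv}^2$ supplies the $\nu$ factor directly, and the jump term is again scaled with $h_e\leq Ch$ to give $\rho_2\norm{h_e^{1/2}\jump{\bv}}_{0,\Eh}^2 \leq C(\rho_2/\rho_1)h^2\enorm{\bv}^2$. The remaining $L^2$ term $\norm{\bv}_0^2$ requires a Poincaré/Friedrichs-type argument that exploits the decomposition $\bv=\bv^C+\bv^D$: the enrichment $\bv^D|_T = c_T(\bx-\bx_T)$ has zero mean on each element, so local Poincaré yields $\norm{\bv^D}_{0,T}\leq C h_T\norm{\nabla\bv^D}_{0,T}$, while $\bv^C\in\Honezd$ is handled by Friedrichs combined with the boundary/interior jump penalty already present in $\enorm{\cdot}$; taken together they deliver $\norm{\bv}_0^2\leq C h^2 \enorm{\bv}^2$. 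Summing the three bounds produces $\trinorm{\bv}^2\leq C(\nu + h^2(\rho_2/\rho_1+1))\enorm{\bv}^2 = C_{\textsc{ne}}^2\,\enorm{\bv}^2$, and \eqref{eqn: bound_cnb} follows from $\sqrt{a+b}\leq\sqrt{a}+\sqrt{b}$ together with $\sqrt{h^2(\rho_2/\rho_1+1)}\leq Ch$.

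The main obstacle is extracting the $O(h)$ scaling of $\norm{\bv}_0$: the zero-mean structure of $\bv^D$ on each element gives this scaling cleanly via local Poincaré, but $\bv^C\in\Honezd$ only enjoys the global Friedrichs bound with an $h$-independent constant. The sharp scaling therefore relies on a careful stability estimate for the $\Vh=\Ch\oplus\Dh$ splitting together with the discrete jump control built into $\enorm{\cdot}$; the remainder of the argument is standard scaling and inverse-inequality bookkeeping.
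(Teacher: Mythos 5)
Your outline reproduces the paper's strategy (bound each piece of $\trinorm{\bv}^2$ against $\enorm{\bv}^2$ for the upper inequality, and use the inverse inequality for the lower one), but the step you yourself flag as ``the main obstacle'' is a genuine gap, not a piece of deferred bookkeeping. The upper bound $\trinorm{\bv}\leq C_{\textsc{ne}}\enorm{\bv}$ with $C_{\textsc{ne}}\leq C(\sqrt{\nu}+h)$ requires $\norm{\bv}_0\leq Ch\enorm{\bv}$ for \emph{every} $\bv\in\Vh$. As you note, this holds for the enrichment part $\bv^D$ by the elementwise zero-mean Poincar\'e inequality, but for the continuous part only the $h$-independent Friedrichs bound is available --- and no ``careful stability estimate for the splitting'' can improve it, because the inequality is simply false on $\Ch\subset\Vh$. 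Take $\bv=\bv^C$ to be the nodal interpolant of a fixed nonzero field in $[C_0^\infty(\Omega)]^d$: then $\jump{\bv}=0$ on every edge, while $\norm{\bv}_0$ and $\norm{\nabla\bv}_{0,\Th}$ both tend to fixed positive limits as $h\to0$, so $\norm{\bv}_0/(h\enorm{\bv})\to\infty$. On this example, for $\nu$ and $h$ both small, $\trinorm{\bv}^2\geq\norm{\bv}_0^2=O(1)$ whereas $C(\sqrt{\nu}+h)\enorm{\bv}=O(\sqrt{\nu}+h)$, so the asserted upper bound in \eqref{eqn: norm_equiv} cannot be recovered by any argument; your proof cannot close at this point.

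For what it is worth, you have located exactly the weak point of the paper's own proof, which disposes of this term with the one-line claim that ``a scaling argument implies $\norm{\bv}_0\leq Ch\norm{\nabla\bv}_{0,\Th}$'' because $\bv|_T$ is linear; that is the inverse inequality written in the wrong direction and fails already for a nonzero constant on $T$. What \emph{can} be proved along your lines is $\norm{\bv^D}_0+\rho_2^{1/2}\norm{h_e^{1/2}\jump{\bv}}_{0,\Eh}\leq Ch\enorm{\bv}$ together with the discrete Poincar\'e bound $\norm{\bv}_0\leq C\enorm{\bv}$, yielding $\trinorm{\bv}\leq C\sqrt{\nu+1}\,\enorm{\bv}$; obtaining the stated $O(\sqrt{\nu}+h)$ constant would require restating the lemma. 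The rest of your proposal is sound and coincides with the paper: the first inequality is trivial, the lower bound via the elementwise inverse inequality and the jump rescaling is correct (though note that converting $h^2h_e^{-1}\leq Ch_e$ needs $h\leq Ch_e$, i.e.\ quasi-uniformity, not $h_e\leq Ch$ as you wrote), and \eqref{eqn: bound_cnb} follows from subadditivity of the square root as you say.
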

\begin{proof}
We observe each term in the energy norm $$\trinorm{\bv}^2=\nu\enorm{\bv}^2 + \norm{\bv}_{0}^2 +\rho_2 \norm{h_e^{1/2}  \jump{\bv}}_{0, \Eh}^2.$$ 
Since $\left.\bv\right|_T$ is a linear polynomial in the second term, a scaling argument implies
\begin{equation*}
    \norm{\bv}_0\leq Ch\norm{\nabla\bv}_{0,\Th}\leq Ch\enorm{\bv}.
\end{equation*}
For the trace term, we have
\begin{equation*}
    \rho_2 \norm{h_e^{1/2}  \jump{\bv}}_{0, \Eh}^2\leq Ch^2\left(\frac{\rho_2}{\rho_1}\right)\rho_1\norm{h_e^{-1/2}  \jump{\bv}}_{0, \Eh}^2\leq Ch^2\left(\frac{\rho_2}{\rho_1}\right)\enorm{\bv}^2.
\end{equation*}
Thus, we obtain
\begin{equation*}
    \trinorm{\bv}^2\leq C\left(\nu+h^2\left(\frac{\rho_2}{\rho_1}+1\right)\right)\enorm{\bv}^2.
\end{equation*}
On the other hand, the inverse inequality and the same argument for the trace term lead to
\begin{equation*}
    \enorm{\bv}^2\leq C h^{-2}\left(\norm{\bv}^2_0+\rho_2 \norm{h_e^{1/2}  \jump{\bv}}_{0, \Eh}^2\right),
\end{equation*}
 where $C$ contains $\rho_1/\rho_2$. In this case, we assume $C>1$ and set $c_1=1/C$, so
\begin{equation*}
    (\nu+c_1h^2)\enorm{\bv}^2\leq \trinorm{\bv}^2.
\end{equation*}
\end{proof}

Let us introduce the interpolation operator in \cite{yi2022locking} $\Pih: [H^2(\Omega)]^d \to \Vh$ defined by
\begin{equation*}
\Pi_h\bw=\Pi_h^C\bw+\Pi_h^D\bw,
\end{equation*}
where $\Pi_h^C\bw\in \bC_h$
is the nodal value interpolant of $\bw$ and $\Pi_h^D\bw\in \bD_h$ satisfies 
$(\nabla\cdot\Pi_h^D\bw,1)_T=(\nabla\cdot(\bw - \Pi_h^C \bw), 1)_{T}$ for all $T\in \Th$.
The following interpolation error estimates and stability \cite{yi2022locking} are used throughout our numerical analysis:
\begin{subequations}\label{sys: Pih}
\begin{alignat}{2}
& |\bw - \Pih \bw | _{j,\Th} \leq C h^{m-j} |\bw|_{m},&&\quad 0 \leq j \leq m \leq 2, \quad\forall\bw\in[H^2(\Omega)]^d, \label{eqn: Pih_err} \\
& \enorm{\bw - \Pih \bw} \leq C h \norm{\bw}_2, &&\quad\forall \bw \in [H^2(\Omega)]^d, \label{eqn: Pih_energy_err}
\\
& \enorm{\Pih \bw} \leq C \snorm{\bw}_1,
&&\quad\forall \bw \in \Honezd. \label{eqn: Pih_stability}
\end{alignat}
\end{subequations}
For the pressure, we introduce the local $L^2$-projection $\mathcal{P}_0: \Hone \to Q_h$ such that $(q - \Pz q, 1)_T = 0$ for all $T\in\Th$. Its interpolation error estimate is given as,
\begin{equation}
  \norm{ q - 
  \Pz q}_0 \leq C h \norm{q}_1,  \quad \forall q \in H^1(\Omega).  \label{eqn: P_err}  
\end{equation}

\subsection{Well-posedness}
We first prove the coercivity and continuity results concerning the energy norm $\trinorm{\cdot}$.
\begin{lemma}\label{lemma: coer_conti}
For any $\bv,\bw\in \mathbf{V}_h$, we have the coercivity and continuity results:
\begin{align}
    \nu\ba(\bv,\bv)+\bc(\bv,\bv)&\geq K_1\trinorm{\bv}^2,\label{eqn: ac_coer}\\
    |\nu\ba(\bv,\bw)+\bc(\bv,\bw)|&\leq K_2\trinorm{\bv}\trinorm{\bw},\label{eqn: ac_conti}
\end{align}
where $K_1=\min(\kappa_1,1)$ and $K_2=\max(\kappa_2,1)$.
\end{lemma}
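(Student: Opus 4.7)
The plan is to reduce each inequality to the coercivity/continuity of $\ba(\cdot,\cdot)$ already recorded in \eqref{eqn: ba_coer}--\eqref{eqn: ba_conti}, and to directly recognize that $\bc(\bv,\bv)$ is exactly the sum of the last two contributions in $\trinorm{\bv}^2$. The key observation is that the three pieces of $\trinorm{\bv}^2$ split cleanly between the two bilinear forms: $\nu\ba(\cdot,\cdot)$ produces the $\nu\enorm{\bv}^2$ piece, while $\bc(\cdot,\cdot)$ produces the $\|\bv\|_0^2$ and $\rho_2\|h_e^{1/2}\jump{\bv}\|_{0,\Eh}^2$ pieces. No quantitative interaction between the two forms is required, so the bounds are obtained with constants that are simply the worst of the two individual constants.

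For the coercivity \eqref{eqn: ac_coer}, I would first invoke \eqref{eqn: ba_coer} to write $\nu\ba(\bv,\bv)\geq \kappa_1\nu\enorm{\bv}^2$. Then, by the definition \eqref{eqn: EGbic} of $\bc$, $\bc(\bv,\bv)=\|\bv\|_0^2+\rho_2\|h_e^{1/2}\jump{\bv}\|_{0,\Eh}^2$ identically (no penalty-type inequality is needed). Adding the two and extracting the smaller of the coefficients yields the bound $\min(\kappa_1,1)\trinorm{\bv}^2$, which gives $K_1=\min(\kappa_1,1)$.

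For the continuity \eqref{eqn: ac_conti}, I would bound $|\nu\ba(\bv,\bw)|$ using \eqref{eqn: ba_conti} as $\kappa_2(\sqrt{\nu}\enorm{\bv})(\sqrt{\nu}\enorm{\bw})$, noting that the factor $\sqrt{\nu}\enorm{\cdot}$ is dominated by $\trinorm{\cdot}$ according to Lemma~\ref{lemma: norm_equiv}. For $\bc$, I would apply Cauchy--Schwarz elementwise and edgewise to get
\begin{equation*}
|\bc(\bv,\bw)|\leq \|\bv\|_0\|\bw\|_0+\rho_2\|h_e^{1/2}\jump{\bv}\|_{0,\Eh}\|h_e^{1/2}\jump{\bw}\|_{0,\Eh}.
\end{equation*}
Setting $a_1=\sqrt{\nu}\enorm{\bv}$, $a_2^2=\|\bv\|_0^2+\rho_2\|h_e^{1/2}\jump{\bv}\|_{0,\Eh}^2$ and similarly for $\bw$, the sum $|\nu\ba(\bv,\bw)+\bc(\bv,\bw)|$ is bounded by $\kappa_2 a_1b_1+a_2b_2\leq \max(\kappa_2,1)(a_1b_1+a_2b_2)$. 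A discrete Cauchy--Schwarz in $\mathbb{R}^2$ then gives $\max(\kappa_2,1)\sqrt{a_1^2+a_2^2}\sqrt{b_1^2+b_2^2}=K_2\trinorm{\bv}\trinorm{\bw}$.

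No step here is a serious obstacle: the structure of $\trinorm{\cdot}$ was tailored precisely so that $\bc(\bv,\bv)$ supplies its $L^2$ and jump components, so nothing beyond \eqref{eqn: ba_coer}--\eqref{eqn: ba_conti} and elementary Cauchy--Schwarz is needed. The only mildly delicate point is keeping the constant in the continuity estimate as $\max(\kappa_2,1)$ rather than $\kappa_2+1$, which is why I would package the two contributions into the $2$-vector Cauchy--Schwarz above rather than bounding them separately and summing.
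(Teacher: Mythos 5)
Your proof is correct and follows essentially the same route as the paper: coercivity comes from \eqref{eqn: ba_coer} plus the exact identity $\bc(\bv,\bv)=\|\bv\|_0^2+\rho_2\|h_e^{1/2}\jump{\bv}\|_{0,\Eh}^2$, and continuity from \eqref{eqn: ba_conti}, Cauchy--Schwarz, and the discrete Cauchy--Schwarz packaging that keeps the constant at $\max(\kappa_2,1)$ (which the paper leaves implicit but uses in the same way). The only cosmetic difference is that you invoke Lemma~\ref{lemma: norm_equiv} to dominate $\sqrt{\nu}\enorm{\cdot}$ by $\trinorm{\cdot}$, which is unnecessary since that bound is immediate from the definition of $\trinorm{\cdot}$ and is not actually needed in your final vector Cauchy--Schwarz step.
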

\begin{proof}
If we observe the bilinear forms $\ba(\cdot,\cdot)$ and $\bc(\cdot,\cdot)$ and use the coercivity \eqref{eqn: ba_coer}, then we have
\begin{align*}
\nu\ba(\bv,\bv)+\bc(\bv,\bv)&\geq \kappa_1\nu\enorm{\bv}^2+\norm{\bv}_{0}^2 +\rho_2 \norm{h_e^{1/2}  \jump{\bv}}_{0, \Eh}^2\\
&\geq \min(\kappa_1,1)\trinorm{\bv}^2.
\end{align*}
Moreover, it follows from the Cauchy-Schwarz inequality and the continuity \eqref{eqn: ba_conti} that
\begin{align*}
|\nu\ba(\bv,\bw)+\bc(\bv,\bw)|& \leq \kappa_2\nu\enorm{\bv}\enorm{\bw}+\norm{\bv}_{0}\norm{\bw}_{0}\\
    &\qquad\qquad + \left(\sqrt{\rho_2}\norm{h_e^{1/2}\jump{\bv}}_{0,\Eh}\right)\left(\sqrt{\rho_2}\norm{h_e^{1/2}\jump{\bw}}_{0,\Eh}\right)\\
    &\leq \max(\kappa_2,1)\trinorm{\bv}\trinorm{\bw}.
\end{align*}
\end{proof}

Next, we prove the discrete inf-sup condition for the problem \eqref{sys: eg} in Algorithm~\ref{alg:EG}.
\begin{lemma} \label{thm: dis inf-sup}
Assume that the penalty parameters $\rho_1$ and $\rho_2$ are sufficiently large.
Then, there exists a positive constant $C_{1}:=C_{\textsc{is}}/C_{\textsc{ne}}$ such that
\begin{equation}
\sup_{\bv\in\Vh}\frac{\bb(\bv,q)}{\trinorm{\bv}}\geq C_{1}\norm{q}_0,\quad \forall q\in Q_h,\label{eqn: inf-sup}
\end{equation}
where $C_{\textsc{is}}>0$ (Inf-Sup), independent of $\nu$ and $h$, is the constant for the inf-sup condition for $\enorm{\cdot}$ in \cite{YiEtAl22Stokes}.
\end{lemma}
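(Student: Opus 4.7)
The plan is to reduce this inf-sup bound on $\trinorm{\cdot}$ to the already-established inf-sup bound on $\enorm{\cdot}$ from \cite{YiEtAl22Stokes} by using the norm equivalence in Lemma~\ref{lemma: norm_equiv}. The key observation is that $\trinorm{\bv} \le C_{\textsc{ne}} \enorm{\bv}$ immediately yields $1/\trinorm{\bv} \ge 1/(C_{\textsc{ne}}\enorm{\bv})$, so any lower bound on $\sup_{\bv}\bb(\bv,q)/\enorm{\bv}$ transfers, with a factor $1/C_{\textsc{ne}}$, to a lower bound on $\sup_{\bv}\bb(\bv,q)/\trinorm{\bv}$.

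Concretely, I would proceed in three short steps. First, I would recall the discrete inf-sup condition for the Stokes-type norm from \cite{YiEtAl22Stokes}: for the penalty parameter $\rho_1$ sufficiently large, there is a constant $C_{\textsc{is}} > 0$, independent of $\nu$ and $h$, such that
\begin{equation*}
\sup_{\bv \in \Vh} \frac{\bb(\bv,q)}{\enorm{\bv}} \ge C_{\textsc{is}} \norm{q}_0, \qquad \forall q \in Q_h.
\end{equation*}
Second, I would invoke the upper bound $\trinorm{\bv} \le C_{\textsc{ne}} \enorm{\bv}$ from \eqref{eqn: norm_equiv} in Lemma~\ref{lemma: norm_equiv}, which gives, for every $\bv \in \Vh$ with $\bv \neq \bzero$,
\begin{equation*}
\frac{\bb(\bv,q)}{\trinorm{\bv}} \ge \frac{\bb(\bv,q)}{C_{\textsc{ne}}\,\enorm{\bv}}.
\end{equation*}

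Third, I would take the supremum over $\bv \in \Vh$ on both sides and combine with the inf-sup condition above to obtain
\begin{equation*}
\sup_{\bv \in \Vh} \frac{\bb(\bv,q)}{\trinorm{\bv}} \ge \frac{1}{C_{\textsc{ne}}} \sup_{\bv \in \Vh} \frac{\bb(\bv,q)}{\enorm{\bv}} \ge \frac{C_{\textsc{is}}}{C_{\textsc{ne}}} \norm{q}_0,
\end{equation*}
for all $q \in Q_h$. Setting $C_1 := C_{\textsc{is}}/C_{\textsc{ne}}$ completes the proof. There is no real obstacle here, since all the work was absorbed into proving the norm equivalence in Lemma~\ref{lemma: norm_equiv} and into the already known inf-sup for $\enorm{\cdot}$. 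The only subtle point worth flagging is that $C_1$ inherits $\nu$- and $h$-dependence through $C_{\textsc{ne}}$, so in the Darcy regime $C_1$ behaves like $1/\sqrt{\nu+h^2}$; this is consistent with the remark in the paper that the standard \texttt{ST-EG} method is not robust as $\nu \to 0$, and it motivates the subsequent analysis of the pressure-robust variant.
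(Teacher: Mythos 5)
Your proof is correct and is essentially identical to the paper's: both reduce the claim to the $\enorm{\cdot}$-inf-sup condition from \cite{YiEtAl22Stokes} via the upper bound $\trinorm{\bv}\leq C_{\textsc{ne}}\enorm{\bv}$ of Lemma~\ref{lemma: norm_equiv}. Your closing remark on the $\nu$- and $h$-dependence of $C_1$ is a correct and worthwhile observation, matching the paper's motivation for the pressure-robust variant.
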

\begin{proof}
It follows from the discrete inf-sup condition in \cite{YiEtAl22Stokes} and the upper bound of $\trinorm{\bv}$ in \eqref{eqn: norm_equiv} that
\begin{equation*}
    C_{\textsc{is}}\norm{q}_0\leq \sup_{\bv\in\Vh}\frac{\bb(\bv,q)}{\enorm{\bv}}\leq C_{\textsc{ne}}\sup_{\bv\in\Vh}\frac{\bb(\bv,q)}{\trinorm{\bv}}.
\end{equation*}
\end{proof}

Furthermore, Lemma~\ref{lemma: norm_equiv} yields the continuity of $\bb(\cdot,\cdot)$ with $\trinorm{\bv}$.
\begin{lemma}\label{lemma: conti_b}
For any $\bv\in\Vh$ and $q\in Q_h$, there exists a positive constant $C$ independent of $\nu$ and $h$ such that
\begin{equation}
    |\bb(\bv,q)|\leq \frac{C}{\sqrt{\nu+c_1 h^2}}\norm{q}_0\trinorm{\bv}.\label{eqn: contib}
\end{equation}
\end{lemma}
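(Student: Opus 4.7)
The plan is to first bound $|\bb(\bv,q)|$ by $\enorm{\bv}\norm{q}_0$ (up to a constant independent of $\nu$ and $h$), and then apply the lower bound in the norm equivalence \eqref{eqn: norm_equiv} from Lemma~\ref{lemma: norm_equiv}, namely $\sqrt{\nu+c_1h^2}\enorm{\bv}\leq\trinorm{\bv}$, to convert the $\enorm{\bv}$ factor into $\trinorm{\bv}/\sqrt{\nu+c_1 h^2}$, which yields exactly \eqref{eqn: contib}.

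The bilinear form $\bb(\bv,q)$ has two pieces. For the volume term $(\nabla\cdot\bv,q)_{\Th}$, I would just apply the Cauchy--Schwarz inequality together with the elementary bound $\norm{\nabla\cdot\bv}_{0,\Th}\leq C\norm{\nabla\bv}_{0,\Th}\leq C\enorm{\bv}$, giving $|(\nabla\cdot\bv,q)_{\Th}|\leq C\enorm{\bv}\norm{q}_0$. For the edge/face term $\langle\jump{\bv}\cdot\bne,\avg{q}\rangle_{\Eh}$, I would introduce the usual $h_e^{\pm 1/2}$ weighting and apply Cauchy--Schwarz:
\begin{equation*}
|\langle\jump{\bv}\cdot\bne,\avg{q}\rangle_{\Eh}|\leq \norm{h_e^{-1/2}\jump{\bv}}_{0,\Eh}\,\norm{h_e^{1/2}\avg{q}}_{0,\Eh}.
\end{equation*}
The first factor is controlled by $\enorm{\bv}/\sqrt{\rho_1}$ directly from the definition of $\enorm{\cdot}$. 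For the second factor, since $q\in Q_h$ is piecewise constant, a standard scaling (or direct computation using $h_e=|e|^{1/(d-1)}$ and summing over elements) gives $\norm{h_e^{1/2}\avg{q}}_{0,\Eh}\leq C\norm{q}_0$.

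Combining the two estimates shows $|\bb(\bv,q)|\leq C\enorm{\bv}\norm{q}_0$ with a constant independent of $\nu$ and $h$. Then, using the leftmost inequalities in \eqref{eqn: norm_equiv} to rewrite $\enorm{\bv}\leq\trinorm{\bv}/\sqrt{\nu+c_1h^2}$ completes the proof. I expect no real obstacles here: the only item that requires care is the bound on $\norm{h_e^{1/2}\avg{q}}_{0,\Eh}$ for piecewise constant $q$, which follows from shape-regularity of $\Th$ and a trivial scaling argument (or from \eqref{eqn: trace} applied to the constant $q|_T$ on each element).
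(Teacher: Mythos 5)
Your proposal is correct and follows essentially the same route as the paper: both reduce the claim to the continuity bound $|\bb(\bv,q)|\leq C\norm{q}_0\enorm{\bv}$ and then apply the lower bound $\sqrt{\nu+c_1h^2}\,\enorm{\bv}\leq\trinorm{\bv}$ from Lemma~\ref{lemma: norm_equiv}. The only difference is that the paper simply cites that continuity estimate from \cite{YiEtAl22Stokes}, whereas you prove it directly (correctly) via Cauchy--Schwarz on the volume and edge terms and a scaling argument for $\norm{h_e^{1/2}\avg{q}}_{0,\Eh}$.
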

\begin{proof}
It follows from
the continuity of $\bb(\cdot,\cdot)$ in \cite{YiEtAl22Stokes} and
the upper bound of $\enorm{\bv}$ in \eqref{eqn: norm_equiv} that
    \begin{equation*}
    |\bb(\bv,q)|\leq C\norm{q}_0\enorm{\bv}\leq \frac{C}{\sqrt{\nu+c_1 h^2}}\norm{q}_0\trinorm{\bv}.
\end{equation*}

\end{proof}

Thus, we obtain the well-posedness of the \texttt{ST-EG} method in Algorithm~\ref{alg:EG}.
\begin{theorem}\label{thm: well_posed}
There exists a unique solution $(\buh,\ph)\in \Vh\times Q_h$ to the \texttt{ST-EG} method. 
\end{theorem}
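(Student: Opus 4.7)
The plan is to apply the standard saddle-point theory for mixed finite element problems, since all the ingredients have essentially been assembled in Lemmas 4.1--4.3: coercivity and continuity of $\nu\ba(\cdot,\cdot)+\bc(\cdot,\cdot)$ with respect to $\trinorm{\cdot}$, the discrete inf-sup condition for $\bb(\cdot,\cdot)$, and continuity of $\bb(\cdot,\cdot)$. Because $\Vh \times Q_h$ is finite-dimensional, the linear system defined by (3.2) is square, and so existence of a solution is equivalent to uniqueness. I will therefore reduce to uniqueness by showing that the homogeneous problem ($\bbf=\bzero$) admits only the trivial solution.

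First I would handle the velocity. Setting $\bv=\buh$ in \eqref{eqn: eg1} and $q=\ph$ in \eqref{eqn: eg2} and then subtracting, the mixed contribution $\bb(\buh,\ph)$ cancels and leaves $\nu\ba(\buh,\buh)+\bc(\buh,\buh)=0$. Applying the coercivity estimate \eqref{eqn: ac_coer} from Lemma~\ref{lemma: coer_conti} yields $K_1\trinorm{\buh}^2 \le 0$, so $\trinorm{\buh}=0$. Since $\trinorm{\cdot}$ is a norm on $\Vh$ (its discrete $H^1$-component controls the $L^2$ mass as well), this forces $\buh=\bzero$.

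Next I would recover uniqueness of the pressure. With $\buh=\bzero$, equation \eqref{eqn: eg1} collapses to $\bb(\bv,\ph)=0$ for every $\bv\in\Vh$. The discrete inf-sup condition \eqref{eqn: inf-sup} of Lemma~\ref{thm: dis inf-sup} then gives
\begin{equation*}
C_{1}\norm{\ph}_0 \le \sup_{\bv\in\Vh}\frac{\bb(\bv,\ph)}{\trinorm{\bv}}=0,
\end{equation*}
so $\ph=0$. Since $C_1=C_{\textsc{is}}/C_{\textsc{ne}}>0$ whenever the penalty parameters $\rho_1,\rho_2$ are sufficiently large (which is the standing hypothesis), this final step is valid.

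I do not anticipate any serious obstacle: the argument is a routine saddle-point uniqueness argument, and the only mild subtlety is confirming that the inf-sup constant $C_1$ is strictly positive for the given $\nu\in[0,1]$ and $h>0$, which is immediate from the bound $C_{\textsc{ne}}\le C(\sqrt{\nu}+h)$ in \eqref{eqn: bound_cnb}. Combining the two steps shows that the homogeneous system has only the trivial solution, and hence by the finite-dimensional dichotomy the nonhomogeneous system \eqref{sys: eg} admits a unique solution $(\buh,\ph)\in\Vh\times Q_h$.
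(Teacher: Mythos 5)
Your proposal is correct and follows essentially the same route as the paper: reduce to the homogeneous problem by finite-dimensionality, test with $\bv=\buh$ and $q=\ph$ to eliminate the $\bb$-terms and conclude $\buh=\bzero$ from the coercivity \eqref{eqn: ac_coer}, then invoke the inf-sup condition \eqref{eqn: inf-sup} to get $\ph=0$. The only cosmetic difference is that the paper adds the two tested equations rather than subtracting them, which is immaterial since $\bb(\buh,\ph)=0$ either way.
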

\begin{proof}
It suffices to show that $\bu_h=\mathbf{0}$ and $p_h=0$ when $\bbf=\mathbf{0}$ because $\Vh$ and $Q_h$ are finite-dimensional spaces.
Choosing $\bv=\bu_h$ in \eqref{eqn: eg1} and $q=p_h$ in \eqref{eqn: eg2} and adding the two equations imply $\nu\ba(\bu_h,\bu_h)+\bc(\bu_h,\bu_h)=0$.
Hence, $\trinorm{\bu_h}=0$ by \eqref{eqn: ac_coer}, so $\bu_h=\mathbf{0}$.
If $\bu_h=\mathbf{0}$ in \eqref{sys: eg}, then $\bb(\bv,p_h)=0$ for all $\bv\in\Vh$. Therefore, the inf-sup condition \eqref{eqn: inf-sup} yields $\norm{p_h}_0=0$, so $p_h=0$.
\end{proof}

\subsection{Error estimates}
Let $(\bu,p)\in [H_0^1(\Omega)\cap H^2(\Omega)]^d\times [L_0^2(\Omega)\cap H^1(\Omega)]$ be the solution to \eqref{eqn: governing1}-\eqref{eqn: governing3}.
We define the error functions used in the error estimates
\begin{equation*}
    \bm{\chi}_h:=\bu-\Pi_h\bu,\quad\mathbf{e}_h:=\Pi_h\bu-\bu_h,\quad\xi_h:=p-\Pz p,\quad\epsilon_h:=\Pz p-p_h.
\end{equation*}

First, we derive error equations in the following lemma.

\begin{lemma}\label{lemma: eg_erreqn}
For any $\bv\in\Vh$ and $q\in Q_h$, we have
\begin{subequations}\label{sys: eg_erreqn}
\begin{alignat}{2}
\nu\ba(\be_h,\bv)+\bc(\be_h,\bv)-\bb(\bv,\epsilon_h)&=l_1(\bu,\bv)+l_2(\bu,\bv)+\mathbf{s}(\Pi_h\bu,\bv)+\bb(\bv,\xi_h),\label{eqn: eg_erreqn1}\\
\bb(\be_h,q)&=-\bb(\bm{\chi}_h,q),\label{eqn: eg_erreqn2}
\end{alignat}
\end{subequations}
where the supplemental bilinear forms are defined as follows:
\begin{align*}
    &l_1(\bu,\bv):=\nu\ba(\Pi_h\bu-\bu,\bv),\\
    &l_2(\bu,\bv):=(\Pi_h\bu-\bu, \bv)_{\Th},\\
    &\mathbf{s}(\Pi_h\bu,\bv):=\rho_2\langle h_e\jump{\Pi_h\bu},\jump{\bv}\rangle_{\Eh}.
\end{align*}
\end{lemma}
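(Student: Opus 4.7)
The plan is to derive the error equations by establishing a consistency identity for the exact solution $(\bu,p)$, subtracting the discrete momentum and mass equations, and then substituting the decompositions $\bu-\bu_h = \bm{\chi}_h + \be_h$ and $p-p_h = \xi_h+\epsilon_h$. Since $\bu \in [H^2(\Omega)\cap H_0^1(\Omega)]^d$, the jumps $\jump{\bu}$ and $\jump{\nabla\bu\,\bn_e}$ vanish on every edge (interior edges by regularity, boundary edges by the Dirichlet condition), so the symmetry and penalty terms of $\ba$ involving $\bu$ collapse, and likewise the $L^2$-penalty contribution inside $\bc$ applied to $\bu$ vanishes.

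First, I would test \eqref{eqn: scaled_Brinkman1} against an arbitrary $\bv\in\Vh$ elementwise and integrate by parts. The Laplacian term produces $\nu(\nabla\bu,\nabla\bv)_{\Th}-\nu\langle\avg{\nabla\bu}\bn_e,\jump{\bv}\rangle_{\Eh}$, which coincides with $\nu\ba(\bu,\bv)$ once the $\jump{\bu}$-terms are dropped. The pressure gradient produces $-\bb(\bv,p)$ via a similar manipulation using $\avg{p}=p$. The result is the consistency identity
\[
\nu\ba(\bu,\bv) + (\bu,\bv)_{\Th} - \bb(\bv,p) = (\bbf,\bv), \quad \forall\bv\in\Vh.
\]

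Subtracting \eqref{eqn: eg1} and expanding $\bc(\bu_h,\bv)=(\bu_h,\bv)_{\Th}+\rho_2\langle h_e\jump{\bv},\jump{\bu_h}\rangle_{\Eh}$ yields
\[
\nu\ba(\bu-\bu_h,\bv) + (\bu-\bu_h,\bv)_{\Th} - \rho_2\langle h_e\jump{\bv},\jump{\bu_h}\rangle_{\Eh} - \bb(\bv, p-p_h) = 0.
\]
I would then substitute $\bu-\bu_h=\bm{\chi}_h+\be_h$ and use the identity $\jump{\bu_h}=\jump{\Pi_h\bu}-\jump{\be_h}$ (which follows from $\jump{\bu}=0$) to split the penalty contribution as $-\mathbf{s}(\Pi_h\bu,\bv)+\rho_2\langle h_e\jump{\bv},\jump{\be_h}\rangle_{\Eh}$. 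The latter term merges with $(\be_h,\bv)_{\Th}$ to form $\bc(\be_h,\bv)$, while the $\bm{\chi}_h$ contributions, when moved to the right-hand side, become $l_1(\bu,\bv)$ and $l_2(\bu,\bv)$; decomposing $p-p_h=\xi_h+\epsilon_h$ yields $\bb(\bv,\xi_h)$ on the right. This produces \eqref{eqn: eg_erreqn1}.

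For \eqref{eqn: eg_erreqn2}, the incompressibility $\nabla\cdot\bu=0$ together with $\jump{\bu}=0$ immediately gives $\bb(\bu,q)=0$ for all $q\in Q_h$, which combined with \eqref{eqn: eg2} yields $\bb(\bu-\bu_h,q)=0$; substituting $\bu-\bu_h=\bm{\chi}_h+\be_h$ finishes the derivation. The main bookkeeping hurdle is the penalty splitting in the previous step: the continuous $\bu$ contributes no $L^2$-jump penalty while the discrete $\bu_h$ does, so one must route the discrepancy through $\Pi_h\bu$ so that $\mathbf{s}(\Pi_h\bu,\bv)$ emerges cleanly on the right-hand side and the remainder combines with $(\be_h,\bv)_{\Th}$ to restore $\bc(\be_h,\bv)$ on the left.
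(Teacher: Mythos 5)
Your proposal is correct and follows essentially the same route as the paper: establish the consistency identity $\nu\ba(\bu,\bv)+(\bu,\bv)_{\Th}-\bb(\bv,p)=(\bbf,\bv)$, subtract the discrete momentum and mass equations, and reorganize via the splittings $\bu-\bu_h=\bm{\chi}_h+\be_h$ and $p-p_h=\xi_h+\epsilon_h$; the paper merely performs the add-and-subtract of $\Pi_h\bu$ inside each bilinear form before comparing with the scheme, whereas you form the full error equation first, which is equivalent bookkeeping. One cosmetic remark: the identity $\jump{\bu_h}=\jump{\Pi_h\bu}-\jump{\be_h}$ follows directly from the definition $\be_h=\Pi_h\bu-\bu_h$ by linearity of the jump and does not require $\jump{\bu}=0$.
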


\begin{proof}
We have $-(\Delta\bu,\bv)
_{\Th}=\ba(\bu,\bv)$ for any $\bv\in\Vh$ from \cite{YiEtAl22Stokes}, which implies that
\begin{align*}
    -\nu(\Delta\bu,\bv)_{\Th}
    =\nu\ba(\Pi_h\bu,\bv)-\nu\ba(\Pi_h\bu-\bu,\bv).
\end{align*}
The definition of $\bc(\cdot,\cdot)$ also gives
\begin{align*}
    (\bu,\bv)_{\Th}=\bc(\Pi_h\bu,\bv)-(\Pi_h\bu-\bu, \bv)_{\Th}-\rho_2\langle h_e\jump{\Pi_h\bu},\jump{\bv}\rangle_{\Eh},
\end{align*}
and integration by parts and continuity of $p$ lead to
\begin{equation*}
    (\nabla p,\bv)_{\Th} = \sum_{T\in\Th}\langle p,\bv\cdot\bn\rangle_{\partial T} -(p,\nabla\cdot \bv)_T= -\bb(\bv,p).
\end{equation*}
Thus, the equation \eqref{eqn: governing1} imposes
\begin{equation*}
    \nu\ba(\Pi_h\bu,\bv)+\bc(\Pi_h\bu,\bv)-\bb(\bv,p)=(\bbf,\bv)+l_1(\bu,\bv)+l_2(\bu,\bv)+\mathbf{s}(\Pi_h\bu,\bv).
\end{equation*}
By comparing this equation with \eqref{eqn: eg1} in the \texttt{ST-EG} method, we arrive at
\begin{equation*}
    \nu\ba(\be_h,\bv)+\bc(\be_h,\bv)-\bb(\bv,\epsilon_h)=l_1(\bu,\bv)+l_2(\bu,\bv)+\mathbf{s}(\Pi_h\bu,\bv)+\bb(\bv,\xi_h).
\end{equation*}
Moreover, it follows from the continuity of $\bu$ and \eqref{eqn: eg2} that
\begin{equation*}
    (\nabla\cdot\bu,q)_{\Th}=\bb(\bu,q)=0=\bb(\bu_h,q),
\end{equation*}
which implies \eqref{eqn: eg_erreqn2}.

\end{proof}

In what follows, we prove estimates for the supplemental bilinear forms in Lemma~\ref{lemma: eg_erreqn}.
\begin{lemma}\label{lemma: eg_supp_est}
Assume that $\bw\in[H^2(\Omega)]^d$ and $\bv\in \Vh$. Then, we have
\begin{subequations}\label{sys: eg_supp_est}
\begin{alignat}{2}
& \left|l_1(\bw,\bv)\right|\leq C\sqrt{\nu} h\norm{\bw}_2\trinorm{\bv},\label{eqn: eg_suppest1} \\
& \left|l_2(\bw,\bv)\right|\leq C h^2\norm{\bw}_2\trinorm{\bv},\label{eqn: eg_suppest2} \\
& \left|\mathbf{s}(\Pi_h\bw,\bv)\right|\leq Ch^2\norm{\bw}_2\trinorm{\bv},\label{eqn: eg_suppest4}
\end{alignat}
\end{subequations}
where $C$ is a generic positive constant independent of $\nu$ and $h$ and may vary in each case.
\end{lemma}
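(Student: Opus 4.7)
The plan is to treat each of the three bounds separately, leveraging the bilinear-form continuity results, the $\Pih$ interpolation estimates in \eqref{sys: Pih}, and the built-in control that the energy norm $\trinorm{\cdot}$ gives over its constituent pieces. Throughout, the key inequalities I will lean on are
\begin{equation*}
\sqrt{\nu}\enorm{\bv}\leq\trinorm{\bv},\quad \norm{\bv}_0\leq\trinorm{\bv},\quad \sqrt{\rho_2}\norm{h_e^{1/2}\jump{\bv}}_{0,\Eh}\leq\trinorm{\bv},
\end{equation*}
which follow directly from the definition of $\trinorm{\cdot}$.

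For the estimate \eqref{eqn: eg_suppest1} on $l_1$, I apply the continuity of $\ba$ in \eqref{eqn: ba_conti} to get $|l_1(\bw,\bv)|\leq \nu\kappa_2\enorm{\Pih\bw-\bw}\enorm{\bv}$, then invoke the interpolation bound \eqref{eqn: Pih_energy_err} to turn $\enorm{\Pih\bw-\bw}$ into $Ch\norm{\bw}_2$, and finally absorb the remaining $\sqrt{\nu}\enorm{\bv}$ into $\trinorm{\bv}$. For the estimate \eqref{eqn: eg_suppest2} on $l_2$, Cauchy-Schwarz on $\Th$ gives $|l_2(\bw,\bv)|\leq \norm{\Pih\bw-\bw}_0\norm{\bv}_0$, and \eqref{eqn: Pih_err} with $j=0,m=2$ yields $\norm{\Pih\bw-\bw}_0\leq Ch^2\norm{\bw}_2$; then $\norm{\bv}_0\leq\trinorm{\bv}$ closes it.

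The only estimate that requires a bit of care is \eqref{eqn: eg_suppest4} on the penalty term $\mathbf{s}$. The main step is to notice that $\bw\in[H^2(\Omega)]^d$ is continuous, so $\jump{\bw}=0$ on every edge/face, and therefore $\jump{\Pih\bw}=\jump{\Pih\bw-\bw}$. Applying Cauchy-Schwarz on $\Eh$, I get
\begin{equation*}
|\mathbf{s}(\Pih\bw,\bv)|\leq \sqrt{\rho_2}\norm{h_e^{1/2}\jump{\Pih\bw-\bw}}_{0,\Eh}\cdot\sqrt{\rho_2}\norm{h_e^{1/2}\jump{\bv}}_{0,\Eh}.
\end{equation*}
The second factor is already bounded by $\trinorm{\bv}$, so the remaining work is to control $\norm{h_e^{1/2}\jump{\Pih\bw-\bw}}_{0,\Eh}$. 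For each face $e\subset\partial T$, I apply the trace inequality \eqref{eqn: trace} to $\Pih\bw-\bw$ on both incident elements, use $h_e\sim h_T$, and then invoke \eqref{eqn: Pih_err} with $j=0,1$ and $m=2$:
\begin{equation*}
h_e\norm{\jump{\Pih\bw-\bw}}_{0,e}^2\leq C\bigl(\norm{\Pih\bw-\bw}_{0,T}^2+h_T^2\norm{\nabla(\Pih\bw-\bw)}_{0,T}^2\bigr)\leq Ch^4\snorm{\bw}_{2,T}^2.
\end{equation*}
Summing over $e\in\Eh$ gives $\norm{h_e^{1/2}\jump{\Pih\bw-\bw}}_{0,\Eh}\leq Ch^2\norm{\bw}_2$, which combined with the earlier bound produces \eqref{eqn: eg_suppest4}.

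The only step that is not quite mechanical is the jump estimate for $\mathbf{s}$, because one must exploit the $H^2$-continuity of $\bw$ to convert the jump of $\Pih\bw$ into the jump of the interpolation error before the trace inequality can give an $h$-power that is high enough to produce $h^2$ on the right-hand side. Everything else is a clean application of continuity, Cauchy-Schwarz, and the standard interpolation estimates in \eqref{sys: Pih}.
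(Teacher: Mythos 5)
Your proof is correct and follows essentially the same route as the paper: continuity of $\ba$ plus \eqref{eqn: Pih_energy_err} and $\sqrt{\nu}\enorm{\bv}\leq\trinorm{\bv}$ for $l_1$, Cauchy--Schwarz plus \eqref{eqn: Pih_err} for $l_2$, and for $\mathbf{s}$ the same key observation that $\jump{\bw}=0$ lets you replace $\jump{\Pih\bw}$ by $\jump{\Pih\bw-\bw}$ before applying the trace inequality \eqref{eqn: trace} and \eqref{eqn: Pih_err}. No gaps; your elementwise detailing of the trace step is just a more explicit version of what the paper does.
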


\begin{proof}
It follows from \eqref{eqn: ba_conti}, \eqref{eqn: Pih_energy_err}, and \eqref{eqn: norm_equiv} that
\begin{align*}
    \left|l_1(\bw,\bv)\right|&=\left|\nu\ba(\Pi_h\bw-\bw,\bv)\right|\\
    &\leq\nu \kappa_2\enorm{\Pi_h\bw-\bw}\enorm{\bv}\\
    &\leq C\nu h \norm{\bw}_2\enorm{\bv}\\
    &\leq C\sqrt{\nu}h\norm{\bw}_2\trinorm{\bv}.
\end{align*}
Using the Cauchy-Schwarz inequality and \eqref{eqn: Pih_err},
we get the following upper bounds
\begin{align*}
    \left|l_2(\bw,\bv)\right|&=\left|(\Pi_h\bw-\bw,\bv)_{\Th}\right|\\
    &\leq\norm{\Pi_h\bw-\bw}_0\norm{\bv}_0\\
    &\leq Ch^2|\bw|_2\trinorm{\bv}.
\end{align*}
Finally, the Cauchy-Schwarz inequality, trace inequality \eqref{eqn: trace}, and \eqref{eqn: Pih_err} imply
\begin{align*}
    \left|\mathbf{s}(\Pi_h\bw,\bv)\right|&=\left|\rho_2\langle h_e\jump{\Pi_h\bw},\jump{\bv}\rangle_{\Eh}\right|\\
    &=\left|\rho_2\langle h_e\jump{\Pi_h\bw-\bw},\jump{\bv}\rangle_{\Eh}\right|\\
    &\leq \rho_2\norm{h_e^{1/2}\jump{\Pi_h\bw-\bw}}_{0,\Eh}\norm{h_e^{1/2}\jump{\bv}}_{0,\Eh}\\
    &\leq\norm{h_e^{1/2}\jump{\Pi_h\bw-\bw}}_{0,\Eh}\trinorm{\bv}\\
    &\leq Ch^2|\bw|_2\trinorm{\bv}.
\end{align*}
\end{proof}

In addition, we expand the continuity of $\bb(\cdot,\cdot)$ in \cite{YiEtAl22Stokes} to be relevant to the error equations \eqref{sys: eg_erreqn} because $\bm{\chi}_h=\bu-\Pi_h\bu\not\in\mathbf{V}_h$ and $\xi_h=p-\Pz p\not\in Q_h$.
\begin{lemma} For any $\bv\in\Vh$ and $q\in Q_h$, we have
    \begin{subequations}\label{sys: supp_conti}
\begin{alignat}{2}
& |\bb(\bv,\xi_h)|\leq Ch \norm{p}_1\enorm{\bv},\label{eqn: supp_conti1} \\
& |\bb(\bm{\chi}_h,q)|\leq Ch\norm{q}_0\norm{\bu}_2,\label{eqn: supp_conti2}
\end{alignat}
\end{subequations}
where $C$ is a generic positive constant independent of $\nu$ and $h$ and may vary in each case.
\end{lemma}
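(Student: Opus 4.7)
The plan is to handle the two estimates in parallel using a common strategy: expand $\bb$ using its definition, kill the volume integral via an orthogonality argument tailored to each case, and then bound the resulting edge term with Cauchy--Schwarz together with the trace inequality \eqref{eqn: trace} and the interpolation errors \eqref{eqn: Pih_err}, \eqref{eqn: P_err}.

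For \eqref{eqn: supp_conti1}, I would write
\begin{equation*}
\bb(\bv,\xi_h)=(\nabla\cdot\bv,\xi_h)_{\Th}-\langle\jump{\bv}\cdot\bne,\avg{\xi_h}\rangle_{\Eh}.
\end{equation*}
The key observation is that for any $\bv=\bv^C+\bv^D\in\Vh$, the divergence $\nabla\cdot\bv|_T$ is constant on each $T\in\Th$ (linear plus $c(\bx-\bx_T)$ both have constant divergence). Since $\xi_h=p-\Pz p$ is $L^2$-orthogonal to piecewise constants on each $T$, the volume term vanishes. Then Cauchy--Schwarz gives
\begin{equation*}
|\bb(\bv,\xi_h)|\leq \norm{h_e^{-1/2}\jump{\bv}}_{0,\Eh}\,\norm{h_e^{1/2}\avg{\xi_h}}_{0,\Eh}.
\end{equation*}
The first factor is controlled by $\enorm{\bv}/\sqrt{\rho_1}$ by definition. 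For the second, the trace inequality \eqref{eqn: trace} applied elementwise combined with $\norm{\xi_h}_{0,T}\leq Ch_T\norm{p}_{1,T}$ and $\norm{\nabla\xi_h}_{0,T}=\norm{\nabla p}_{0,T}$ (since $\Pz p|_T$ is constant) gives $\norm{\xi_h}_{0,e}^2\leq Ch_T\norm{p}_{1,T}^2$, hence $\norm{h_e^{1/2}\avg{\xi_h}}_{0,\Eh}\leq Ch\norm{p}_1$ after summation. Combining yields \eqref{eqn: supp_conti1}.

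For \eqref{eqn: supp_conti2}, expand
\begin{equation*}
\bb(\bm{\chi}_h,q)=(\nabla\cdot\bm{\chi}_h,q)_{\Th}-\langle\jump{\bm{\chi}_h}\cdot\bne,\avg{q}\rangle_{\Eh}.
\end{equation*}
The volume term vanishes by the defining property of $\Pi_h$: since $\Pi_h^D\bu$ is chosen so that $(\nabla\cdot\Pi_h\bu,1)_T=(\nabla\cdot\bu,1)_T$, one has $(\nabla\cdot\bm{\chi}_h,1)_T=0$, and this is enough because $q|_T\in P_0(T)$. Then Cauchy--Schwarz gives
\begin{equation*}
|\bb(\bm{\chi}_h,q)|\leq\norm{h_e^{-1/2}\jump{\bm{\chi}_h}}_{0,\Eh}\,\norm{h_e^{1/2}\avg{q}}_{0,\Eh}.
\end{equation*}
For the trace factor of $q$, since $q$ is piecewise constant, a scaling argument yields $\norm{h_e^{1/2}\avg{q}}_{0,\Eh}\leq C\norm{q}_0$. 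For the jump factor, apply \eqref{eqn: trace} to $\bm{\chi}_h=\bu-\Pi_h\bu$ on each $T$, then use \eqref{eqn: Pih_err} with $j=0,1$ and $m=2$ to obtain $\norm{\jump{\bm{\chi}_h}}_{0,e}^2\leq Ch_T^3\norm{\bu}_{2,T}^2$, so $\norm{h_e^{-1/2}\jump{\bm{\chi}_h}}_{0,\Eh}\leq Ch\norm{\bu}_2$. Multiplying delivers \eqref{eqn: supp_conti2}.

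The only subtle point (the part I would double-check carefully) is the divergence-vanishing in each case: in the first estimate it relies on $\nabla\cdot\Vh\subset P_0(\Th)$, which depends on the specific enrichment form $c(\bx-\bx_T)$; in the second it relies on the non-standard property of $\Pi_h$ defined via a divergence-matching condition on each element. Everything else is a routine assembly of the trace inequality with the interpolation bounds \eqref{eqn: Pih_err} and \eqref{eqn: P_err}, and the proof concludes without invoking Lemma~\ref{lemma: norm_equiv} (consistent with the fact that the bounds are in terms of $\enorm{\bv}$ and $\norm{q}_0$ rather than $\trinorm{\cdot}$).
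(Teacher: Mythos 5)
Your proof is correct, and the final bounds match the paper's. The one place you diverge from the paper's argument is the treatment of the volume terms: you observe that $(\nabla\cdot\bv,\xi_h)_{\Th}$ and $(\nabla\cdot\bm{\chi}_h,q)_{\Th}$ vanish exactly --- the first because $\nabla\cdot\Vh\subset P_0(\Th)$ (both the $P_1$ part and the enrichment $c(\bx-\bx_T)$ have constant divergence) while $\xi_h=p-\Pz p$ is elementwise mean-zero, the second by the divergence-matching property built into $\Pih^D$ together with $q|_T\in P_0(T)$. Both observations are valid. The paper does not use them: it simply retains the volume terms and bounds them by Cauchy--Schwarz, via $\norm{\nabla\bv}_{0,\Th}\norm{\xi_h}_0\leq Ch\norm{p}_1\enorm{\bv}$ using \eqref{eqn: P_err} in the first case and $\norm{\nabla\bm{\chi}_h}_{0,\Th}\norm{q}_0\leq Ch\norm{\bu}_2\norm{q}_0$ via $\enorm{\bm{\chi}_h}\leq Ch\norm{\bu}_2$ in the second. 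Since the volume contributions are already of the right order, the orthogonality buys nothing extra here in terms of rates, but it is a cleaner structural fact (and the same divergence-matching property is what makes $\Pih$ useful elsewhere in the analysis). The edge-term estimates --- trace inequality combined with $\nabla\xi_h=\nabla p$, the local scaling $\norm{h_e^{1/2}\avg{q}}_{0,\Eh}\leq C\norm{q}_0$ for piecewise constants, and \eqref{eqn: Pih_err} for $\jump{\bm{\chi}_h}$ --- coincide with the paper's.
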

\begin{proof}
First, we use the Cauchy-Schwarz inequality to get
    \begin{align*}
        |\bb(\bv,\xi_h)|&=|(\nabla\cdot\bv,\xi_h)_{\Th}-\langle \jump{\bv}\cdot\bn_e,\avg{\xi_h}\rangle_{\Eh}|\\
        &\leq C\left(\norm{\nabla \bv}_{0,\Th}\norm{\xi_h}_0+\norm{h_e^{-1/2}\jump{\bv}}_{0,\Eh}\norm{h_e^{1/2}\avg{\xi_h}}_{0,\Eh}\right).
    \end{align*}
Then, the trace term is bounded by using the trace inequality \eqref{eqn: trace} and interpolation error estimate \eqref{eqn: P_err},
    \begin{equation*}
        \norm{h_e^{1/2}\avg{\xi_h}}_{0,\Eh}^2\leq C\left(\norm{\xi_h}_0^2+h^2\norm{\nabla\xi_h}_{0,\Th}^2\right)\leq Ch^2\norm{p}_1^2
    \end{equation*}
because $\nabla\xi_h=\nabla(p-\Pz p)=\nabla p$.
Hence, the definition of the discrete $H^1$-norm and estimate \eqref{eqn: P_err} imply
    \begin{equation*}
        |\bb(\bv,\xi_h)|\leq Ch\norm{p}_1\enorm{\bv}.
    \end{equation*}
Similarly, it follows from the Cauchy-Schwarz inequality, trace inequality \eqref{eqn: trace}, and \eqref{eqn: Pih_energy_err} that
    \begin{align*}
        |\bb(\bm{\chi}_h,q)|&\leq C\left(\norm{\nabla \bm{\chi}_h}_{0,\Th}\norm{q}_0+\norm{h_e^{-1/2}\jump{\bm{\chi}_h}}_{0,\Eh}\norm{h_e^{1/2}\avg{q}}_{0,\Eh}\right)\\
        &\leq C\norm{q}_0\enorm{\bm{\chi}_h}\leq Ch\norm{q}_0\norm{\bu}_2.
    \end{align*}
\end{proof}

Therefore, we show error estimates of the \texttt{ST-EG} method in Algorithm~\ref{alg:EG} for the Brinkman equations.

\begin{theorem}\label{thm: eg_error_est1}
Let $(\bu,p)\in [H_0^1(\Omega)\cap H^2(\Omega)]^d\times [L_0^2(\Omega)\cap H^1(\Omega)]$ be the solution to \eqref{eqn: governing1}-\eqref{eqn: governing3}, and $(\buh,p_h)\in \Vh\times Q_h$ be the discrete solution from the \texttt{ST-EG} method. Then, we have the following error estimates
 \begin{align*}
      \trinorm{\Pi_h\bu-\bu_h}&\leq C\left[(\sqrt{\nu}+1)h\norm{\bu}_2 + \left( h+\frac{h}{\sqrt{\nu+c_1 h^2}}\right)\norm{p}_1 \right],\\
      \norm{\Pz p-p_h}_0&\leq C\left[ (\nu+\sqrt{\nu})h\norm{\bu}_2 + (\sqrt{\nu}+1)h\norm{p}_1  \right].
  \end{align*}
\end{theorem}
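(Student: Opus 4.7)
The plan is to treat Algorithm~\ref{alg:EG} as a perturbed saddle-point system and derive the two error estimates by combining the coercivity of $\nu\ba+\bc$ in the energy norm (Lemma~\ref{lemma: coer_conti}) with the discrete inf-sup condition (Lemma~\ref{thm: dis inf-sup}) in three steps.

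First, I would test the error equation \eqref{eqn: eg_erreqn1} with $\bv=\be_h$ and \eqref{eqn: eg_erreqn2} with $q=\epsilon_h$, then add them. The $\bb(\be_h,\epsilon_h)$ terms cancel, and after applying the coercivity \eqref{eqn: ac_coer} on the left and the estimates in Lemma~\ref{lemma: eg_supp_est}, \eqref{eqn: supp_conti1}--\eqref{eqn: supp_conti2}, and the norm equivalence of Lemma~\ref{lemma: norm_equiv} on the right, I would obtain an inequality of the form
\begin{equation*}
K_1\trinorm{\be_h}^2 \le C\Bigl[(\sqrt{\nu}+h)h\norm{\bu}_2 + \tfrac{h}{\sqrt{\nu+c_1h^2}}\norm{p}_1\Bigr]\trinorm{\be_h} + Ch\norm{\bu}_2\norm{\epsilon_h}_0,
\end{equation*}
which couples $\trinorm{\be_h}$ with $\norm{\epsilon_h}_0$ and so does not close on its own.

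Second, I would bound $\norm{\epsilon_h}_0$ via the inf-sup condition. Reading \eqref{eqn: eg_erreqn1} as an identity for $\bb(\bv,\epsilon_h)$, estimating each summand using \eqref{eqn: ac_conti}, Lemma~\ref{lemma: eg_supp_est}, \eqref{eqn: supp_conti1}, and Lemma~\ref{lemma: norm_equiv}, and then invoking \eqref{eqn: inf-sup} together with $1/C_1=C_{\textsc{ne}}/C_{\textsc{is}}\le C(\sqrt{\nu}+h)$ from \eqref{eqn: bound_cnb}, I would conclude
\begin{equation*}
\norm{\epsilon_h}_0 \le C(\sqrt{\nu}+h)\trinorm{\be_h} + C(\sqrt{\nu}+h)^2 h\norm{\bu}_2 + \tfrac{C(\sqrt{\nu}+h)h}{\sqrt{\nu+c_1h^2}}\norm{p}_1.
\end{equation*}

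Third, I would substitute this pressure bound into Step 1, apply Young's inequality to absorb all $\trinorm{\be_h}$-linear contributions into the quadratic $\trinorm{\be_h}^2$ on the left, and solve for $\trinorm{\be_h}$ to extract the velocity estimate in the stated form. Re-inserting this velocity bound back into Step 2 then yields the pressure estimate.

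The main obstacle is the bookkeeping of constants in Step 3. The inf-sup factor $1/C_1\le C(\sqrt{\nu}+h)$ multiplies both $\trinorm{\be_h}$ and the consistency term $h/\sqrt{\nu+c_1h^2}$, producing cross-terms of the shape $(\sqrt{\nu}+h)(\sqrt{\nu}+1)h$ and $(\sqrt{\nu}+h)h/\sqrt{\nu+c_1h^2}$. Collapsing the first to $(\nu+\sqrt{\nu})h$ and the second to $h$ via the elementary inequalities $(\sqrt{\nu}+h)^2\le 2(\nu+h^2)\le C(\nu+c_1h^2)$ and $h\le 1$, while absorbing the residual higher-order contributions into the stated coefficients, is the delicate step that produces the $(\nu+\sqrt{\nu})h\norm{\bu}_2$ and $(\sqrt{\nu}+1)h\norm{p}_1$ factors in the pressure estimate and the $(\sqrt{\nu}+1)h\norm{\bu}_2$ and $(h+h/\sqrt{\nu+c_1h^2})\norm{p}_1$ factors in the velocity estimate.
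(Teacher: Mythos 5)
Your proposal is correct and follows essentially the same route as the paper's proof: the paper likewise derives the intermediate pressure bound $\norm{\epsilon_h}_0\le C(\sqrt{\nu}+h)(\trinorm{\be_h}+\cdots)$ from the inf-sup condition and \eqref{eqn: bound_cnb}, tests the error equations with $\bv=\be_h$, $q=\epsilon_h$ to replace $\bb(\be_h,\epsilon_h)$ by $-\bb(\bm{\chi}_h,\epsilon_h)$, and closes the coupled system with Young's inequality and the inequality $\sqrt{\nu}+h\le(\sqrt{2/c_1})\sqrt{\nu+c_1h^2}$ that you also identify. The only difference is the order in which the two intermediate bounds are presented, which is immaterial.
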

\begin{proof}
First of all, we apply the continuity results \eqref{eqn: ac_conti}, \eqref{eqn: supp_conti1}, the estimates \eqref{sys: eg_supp_est}, and the norm equivalence \eqref{eqn: norm_equiv} to the error equation \eqref{eqn: eg_erreqn1},
\begin{align*}   \bb(\bv,\epsilon_h)&=\nu\ba(\be_h,\bv)+\bc(\be_h,\bv)-l_1(\bu,\bv)-l_2(\bu,\bv)-\mathbf{s}(\Pi_h\bu,\bv)-\bb(\bv,\xi_h)\\
&\leq C\left(\trinorm{\be_h}+\sqrt{\nu}h\norm{\bu}_2+h^2\norm{\bu}_2+\frac{h}{\sqrt{\nu+c_1 h^2}}\norm{p}_1\right)\trinorm{\bv}.
\end{align*}
Thus, the inf-sup condition \eqref{eqn: inf-sup} with \eqref{eqn: bound_cnb} implies
\begin{equation}\label{eqn: eg_inter_result2}
    \norm{\epsilon_h}_0\leq C(\sqrt{\nu}+h)\left(\trinorm{\be_h}+\sqrt{\nu}h\norm{\bu}_2+h^2\norm{\bu}_2+\frac{h}{\sqrt{\nu+c_1 h^2}}\norm{p}_1\right).
\end{equation}
We choose $\bv=\be_h$ in \eqref{eqn: eg_erreqn1} and $q=\epsilon_h$ in \eqref{eqn: eg_erreqn2} and substitute $\bb(\be_h,\epsilon_h)$ with $-\bb(\bm{\chi}_h,\epsilon_h)$ to obtain
\begin{equation*}
\nu\ba(\be_h,\be_h)+\bc(\be_h,\be_h)=-\bb(\bm{\chi}_h,\epsilon_h)+l_1(\bu,\be_h)+l_2(\bu,\be_h)+\mathbf{s}(\Pi_h\bu,\be_h)+\bb(\be_h,\xi_h).
\end{equation*}
In this case, we estimate the term  $\bb(\bm{\chi}_h,\epsilon_h)$
using \eqref{eqn: supp_conti2},
\begin{equation}\label{eqn: eg_inter_result}
    |\bb(\bm{\chi}_h,\epsilon_h)|\leq Ch\norm{\bu}_2\norm{\epsilon_h}_0.
\end{equation}
The term $\bb(\be_h,\xi_h)$ is estimated by using \eqref{eqn: supp_conti1} and \eqref{eqn: norm_equiv},
\begin{equation}\label{eqn: eg_inter-result3}
    |\bb(\be_h,\xi_h)|\leq Ch\norm{p}_1\enorm{\be_h}\leq C\frac{h}{\sqrt{\nu+c_1h^2}}\norm{p}_1\trinorm{\be_h}.
\end{equation}
Hence, it follows from \eqref{eqn: ac_coer}, \eqref{eqn: eg_inter_result}, \eqref{sys: eg_supp_est}, and \eqref{eqn: eg_inter-result3} that
\begin{equation*}
    \trinorm{\be_h}^2\leq C\left(h\norm{\bu}_2\norm{\epsilon_h}_0 + \sqrt{\nu}h\norm{\bu}_2\trinorm{\be_h}+h^2\norm{\bu}_2\trinorm{\be_h} + \frac{h}{\sqrt{\nu+c_1 h^2}}\norm{p}_1\trinorm{\be_h} \right).
\end{equation*}
We use the estimate \eqref{eqn: eg_inter_result2} and omit high-order terms ($h^3$ or $h^4$) to obtain,
\begin{align*}   
h\norm{\bu}_2\norm{\epsilon_h}_0&\leq C\left( (\sqrt{\nu}+h)h\norm{\bu}_2\trinorm{\be_h} + \nu h^2\norm{\bu}_2^2 + \frac{\sqrt{\nu}+h}{\sqrt{\nu+c_1 h^2}}h^2\norm{\bu}_2\norm{p}_1\right)\\
&\leq C\left( (\sqrt{\nu}+h)h\norm{\bu}_2\trinorm{\be_h} + \nu h^2\norm{\bu}_2^2+ h^2\norm{\bu}_2\norm{p}_1\right)
\end{align*}
because $\sqrt{\nu} +h\leq (\sqrt{2/c_1})\sqrt{\nu+c_1 h^2}$.
If we apply the Young’s inequality to each term with a positive constant $\alpha$, then we have
\begin{align*}
&\sqrt{\nu}h\norm{\bu}_2\trinorm{\be_h}\leq \frac{\nu h^2}{2\alpha}\norm{\bu}_2^2+\frac{\alpha}{2}\trinorm{\be_h}^2,\\
&h^2\norm{\bu}_2\trinorm{\be_h}\leq\frac{h^4}{2\alpha}\norm{\bu}_2^2 + \frac{\alpha}{2}\trinorm{\be_h}^2,\\
&h^2\norm{\bu}_2\norm{p}_1\leq\frac{h^2}{2\alpha}\norm{\bu}_2^2 + \frac{\alpha h^2}{2}\norm{p}_1^2,\\
&\frac{h}{\sqrt{\nu+c_1 h^2}}\norm{p}_1\trinorm{\be_h}\leq \frac{h^2}{2\alpha(\nu+c_1 h^2)}\norm{p}_1^2+\frac{\alpha}{2}\trinorm{\be_h}^2.
\end{align*}
Therefore, a proper $\alpha$ implies
\begin{equation*}
    \trinorm{\be_h}^2\leq C\left[(\nu+1)h^2\norm{\bu}_2^2 + \left( h^2+\frac{h^2}{\nu+c_1 h^2}\right)\norm{p}_1^2 \right],
\end{equation*}
so we finally get
\begin{equation}\label{eqn: inter_velo_esti}
    \trinorm{\be_h}\leq C\left[(\sqrt{\nu}+1)h\norm{\bu}_2 + \left( h+\frac{h}{\sqrt{\nu+c_1 h^2}}\right)\norm{p}_1 \right].
\end{equation}
On the other hand, we observe the intermediate estimate \eqref{eqn: eg_inter_result2} and omit high-order terms ($h^2$ or $h^3$) to show the pressure error estimate,
\begin{equation*}
    \norm{\epsilon_h}_0\leq C\left[(\sqrt{\nu}+h)\trinorm{\be_h}+\nu h\norm{\bu}_2+h\norm{p}_1\right].
\end{equation*}
Thus, we bound $\trinorm{\be_h}$ with the velocity error estimate \eqref{eqn: inter_velo_esti}, so we finally obtain
\begin{equation*}
    \norm{\epsilon_h}_0\leq C\left[ (\nu+\sqrt{\nu})h\norm{\bu}_2 + (\sqrt{\nu}+1)h\norm{p}_1  \right],
\end{equation*}
when omitting $h^2$-terms.
\end{proof}

\begin{remark}
    Theorem~\ref{thm: eg_error_est1} explains that the errors converge in the first order with $h$ under the condition $h<\sqrt{\nu}$ easily satisfied in the Stokes regime.
However, the velocity error in the Darcy regime may not decrease with $h$ due to the pressure term in the velocity error bound, that is, when $\nu\rightarrow 0$,
\begin{equation*}
    \frac{h}{\sqrt{\nu+c_1h^2}}\norm{p}_1\rightarrow\frac{1}{\sqrt{c_1}}\norm{p}_1.
\end{equation*}
We will confirm these theoretical results through numerical experiments.
For this reason, the \texttt{ST-EG} method in Algorithm~\ref{alg:EG} may not be effective in solving the Brinkman equations with small $\nu$, which motivates us to develop and analyze the \texttt{PR-EG} method in Algorithm~\ref{alg:UREG}.
\end{remark}

%%%%%%%%%%%%%%%%%%%%%%%%%%%%%%%%%%
% 5. Well-Posedness and Error Analysis for PR-EG
%%%%%%%%%%%%%%%%%%%%%%%%%%%%%%%%%%

\section{Well-Posedness and Error Analysis for PR-EG (Algorithm~\ref{alg:UREG})}
\label{sec:ureg}

In this section, we prove well-posedness and error estimates for the \texttt{PR-EG} method in Algorithm~\ref{alg:UREG}.
The error estimates show that the \texttt{PR-EG} method's velocity and pressure errors decrease in the optimal order of convergence in both the Stokes and Darcy regimes, so we expect stable and accurate numerical solutions with any $\nu$ as $h$ decreases.

We first define another energy norm by replacing $\norm{\bv}_0$ with $\norm{\cR\bv}_0$,
\begin{equation*}
    \trinorm{\bv}^2_\mathcal{R} := \nu\enorm{\bv}^2 + \norm{\cR\bv}_{0}^2 +\rho_2 \norm{h_e^{1/2}  \jump{\bv}}_{0, \Eh}^2.
\end{equation*}
We also introduce the interpolation error estimate of the operator $\cR$ in \cite{HuLeeMuYi}.
\begin{lemma} For any $\bv \in \Vh$, there exists a positive constant $C$ independent of $\nu$ and $h$ such that
\begin{equation}\label{eqn: cR-err}
\norm{ \bv - \cR\bv}_{0}\leq Ch\norm{h_e^{-1/2}\jump{\bv}}_{0,\Eh} \leq C h \enorm{\bv}.
\end{equation}
\label{lemma: cR-err}
\end{lemma}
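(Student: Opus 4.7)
The plan is to handle the two inequalities in turn. The second one, $\norm{h_e^{-1/2}\jump{\bv}}_{0,\Eh} \leq C\enorm{\bv}$, is immediate from the definition $\enorm{\bv}^2 = \norm{\nabla\bv}_{0,\Th}^2 + \rho_1\norm{h_e^{-1/2}\jump{\bv}}_{0,\Eh}^2$, with $C = 1/\sqrt{\rho_1}$. The substantive inequality is the first one, which I would prove element by element via a standard scaling argument combined with a direct computation of the edge/face moments of $\bv - \cR\bv$.

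First I would observe that on each $T \in \Th$, both $\bv|_T$ and $(\cR\bv)|_T$ lie in $[P_1(T)]^d = \mathcal{B}DM_1(T)$, so $\bw := (\bv - \cR\bv)|_T$ is a vector-valued polynomial of degree at most one on $T$. Since the edge/face normal moments against $P_1$-test functions form a unisolvent set of degrees of freedom for $\mathcal{B}DM_1(T)$, norm equivalence on the reference element, together with affine scaling and shape regularity, gives a local bound of the form
\begin{equation*}
\norm{\bw}_{0,T}^2 \leq C\, h_T^{\,2-d} \sum_{e \subset \partial T} \sum_{j} \left|\int_e \bw\cdot\bn_e\, q_j^e\, ds\right|^2,
\end{equation*}
where $\{q_j^e\}$ is a fixed $L^\infty$-normalized basis of $P_1(e)$.

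Next I would evaluate these moments using the definition \eqref{sys: BDM} of $\cR$. For an interior edge $e \in \Eho$ shared by $T$ and a neighbor $T'$,
\begin{equation*}
\int_e \bw\cdot\bn_e\, q_j^e\, ds \;=\; \int_e (\bv|_T - \avg{\bv})\cdot\bn_e\, q_j^e\, ds \;=\; \pm\tfrac{1}{2}\int_e \jump{\bv}\cdot\bn_e\, q_j^e\, ds,
\end{equation*}
while on a boundary edge $e \in \Ehb$, the convention $\jump{\bv} = \bv$ gives
\begin{equation*}
\int_e \bw\cdot\bn_e\, q_j^e\, ds \;=\; \int_e \bv\cdot\bn_e\, q_j^e\, ds \;=\; \int_e \jump{\bv}\cdot\bn_e\, q_j^e\, ds.
\end{equation*}
In both cases, Cauchy--Schwarz on $e$ together with $\norm{q_j^e}_{0,e} \leq C\, h_e^{(d-1)/2}$ gives $|\int_e \bw\cdot\bn_e\, q_j^e\, ds|^2 \leq C\, h_e^{\,d-1}\norm{\jump{\bv}}_{0,e}^2$.

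Inserting this into the local bound, using $h_e \sim h_T$ from shape regularity, and summing over all $T \in \Th$ (each interior edge contributing to exactly two elements) then yields
\begin{equation*}
\norm{\bv - \cR\bv}_0^2 \;\leq\; C\, h^2 \sum_{e \in \Eh} h_e^{-1}\norm{\jump{\bv}}_{0,e}^2 \;=\; C\, h^2 \norm{h_e^{-1/2}\jump{\bv}}_{0,\Eh}^2,
\end{equation*}
which is the first inequality. The only delicate step is carrying out the reference-element scaling carefully so that all constants remain independent of $\nu$ and $h$ (this is where shape regularity is used); once the local identification of the normal moments as DoFs and their evaluation in terms of $\jump{\bv}$ are in place, the remaining manipulations are routine.
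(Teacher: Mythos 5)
Your proof is correct. Note that the paper itself does not prove this lemma at all --- it is imported verbatim from the reference \cite{HuLeeMuYi} --- so there is no in-paper argument to compare against; what you have written is a sound, self-contained proof of the cited result. The key observations all check out: $\bv|_T$ and $(\cR\bv)|_T$ both lie in $[P_1(T)]^d=\mathcal{B}DM_1(T)$, so the difference is controlled by its normal moments (the BDM$_1$ degrees of freedom); the definition \eqref{sys: BDM} makes those moments equal to $\pm\tfrac12\int_e\jump{\bv}\cdot\bn_e\,q\,ds$ on interior faces and $\int_e\jump{\bv}\cdot\bn_e\,q\,ds$ on boundary faces; and the scaling exponent $h_T^{2-d}$ combined with $\|q_j^e\|_{0,e}\leq Ch_e^{(d-1)/2}$ produces exactly the factor $h^2\,h_e^{-1}$ needed, as one can confirm by dimensional analysis ($h_T^{2-d}\cdot h_e^{d-1}\cdot h_e^{d-1}\|\jump{\bv}\|_{L^\infty}^2\sim h^d\|\bw\|_{L^\infty}^2\sim\|\bw\|_{0,T}^2$). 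The second inequality is indeed immediate from the definition of $\enorm{\cdot}$ with constant $\rho_1^{-1/2}$, and no constant anywhere depends on $\nu$. The one step you flag as delicate --- the reference-element norm equivalence under shape regularity --- is standard and correctly invoked.
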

This interpolation error estimate allows to have the norm equivalence between $\trinorm{\bv}_\mathcal{R}$ and $\enorm{\bv}$ scaled by $\nu$ and $h$, similar to Lemma~\ref{lemma: norm_equiv}.
\begin{lemma}\label{lemma: norm_equiv_ur}
For any $\bv\in\Vh$, it holds
\begin{equation}\label{eqn: ur_norm_equiv}
\sqrt{\nu}\enorm{\bv}\leq\sqrt{\nu+c_2 h^2}\enorm{\bv}\leq\trinorm{\bv}_\mathcal{R}\leq C_{\textsc{ne}}\enorm{\bv},
\end{equation}
where $C_{\textsc{ne}}$ is the constant defined in Lemma~\ref{lemma: norm_equiv} and $0<c_2<1$ is a small constant.
\end{lemma}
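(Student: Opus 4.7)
The plan is to mimic the proof of Lemma~\ref{lemma: norm_equiv} by estimating each of the three contributions to $\trinorm{\bv}_\mathcal{R}^2$, with the key modification that $\norm{\bv}_0$ is replaced by $\norm{\cR\bv}_0$. Throughout, I will use the identity $\cR\bv = \bv + (\cR\bv - \bv)$ and control the reconstruction error via Lemma~\ref{lemma: cR-err}.

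\emph{Upper bound.} First I would split
\begin{equation*}
\norm{\cR\bv}_0^2 \le 2\norm{\bv}_0^2 + 2\norm{\cR\bv-\bv}_0^2.
\end{equation*}
A scaling argument on each element (since $\bv|_T$ is linear) yields $\norm{\bv}_0 \le Ch\enorm{\bv}$, while Lemma~\ref{lemma: cR-err} combined with $h_e\sim h$ gives $\norm{\cR\bv-\bv}_0^2 \le Ch^2\norm{h_e^{-1/2}\jump{\bv}}_{0,\Eh}^2\le (Ch^2/\rho_1)\enorm{\bv}^2$. The jump term is handled exactly as in Lemma~\ref{lemma: norm_equiv}, producing $\rho_2\norm{h_e^{1/2}\jump{\bv}}_{0,\Eh}^2 \le Ch^2(\rho_2/\rho_1)\enorm{\bv}^2$. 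Adding the three contributions gives
\begin{equation*}
\trinorm{\bv}_\mathcal{R}^2 \le C\Bigl(\nu + h^2\bigl(\tfrac{\rho_2}{\rho_1}+1\bigr)\Bigr)\enorm{\bv}^2 = C_{\textsc{ne}}^2\enorm{\bv}^2,
\end{equation*}
which matches the definition of $C_{\textsc{ne}}$ from Lemma~\ref{lemma: norm_equiv}.

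\emph{Lower bound.} This is the more delicate step. I would start from the inverse inequality already used in Lemma~\ref{lemma: norm_equiv}, namely $\enorm{\bv}^2 \le Ch^{-2}\bigl(\norm{\bv}_0^2 + \rho_2\norm{h_e^{1/2}\jump{\bv}}_{0,\Eh}^2\bigr)$ for some $C>1$. To replace $\norm{\bv}_0$ with $\norm{\cR\bv}_0$ I again invoke the triangle inequality and Lemma~\ref{lemma: cR-err}:
\begin{equation*}
\norm{\bv}_0^2 \le 2\norm{\cR\bv}_0^2 + 2\norm{\bv-\cR\bv}_0^2 \le 2\norm{\cR\bv}_0^2 + Ch^2\norm{h_e^{-1/2}\jump{\bv}}_{0,\Eh}^2.
\end{equation*}
The main obstacle is ensuring that the reconstruction error does not produce an uncontrollable factor of $\enorm{\bv}^2$ on the right. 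The crucial observation is that Lemma~\ref{lemma: cR-err} gives the sharper bound in terms of the edge jumps, which I rewrite as $Ch^2\norm{h_e^{-1/2}\jump{\bv}}_{0,\Eh}^2 \le C'\rho_2\norm{h_e^{1/2}\jump{\bv}}_{0,\Eh}^2$ (after using $h_e\sim h$ and absorbing constants into $\rho_2$). Substituting back,
\begin{equation*}
\enorm{\bv}^2 \le C h^{-2}\bigl(\norm{\cR\bv}_0^2 + \rho_2\norm{h_e^{1/2}\jump{\bv}}_{0,\Eh}^2\bigr),
\end{equation*}
with $C>1$ independent of $\nu$ and $h$. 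Setting $c_2 := 1/C \in (0,1)$ and adding $\nu\enorm{\bv}^2$ to both sides yields
\begin{equation*}
(\nu + c_2 h^2)\enorm{\bv}^2 \le \nu\enorm{\bv}^2 + \norm{\cR\bv}_0^2 + \rho_2\norm{h_e^{1/2}\jump{\bv}}_{0,\Eh}^2 = \trinorm{\bv}_\mathcal{R}^2,
\end{equation*}
which combined with the trivial $\sqrt{\nu}\le\sqrt{\nu+c_2 h^2}$ proves the first two inequalities.
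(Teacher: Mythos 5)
Your proof is correct and follows the same overall structure as the paper's: both bounds are obtained by writing $\cR\bv=\bv+(\cR\bv-\bv)$ and controlling the reconstruction error with Lemma~\ref{lemma: cR-err}, and the upper bound is essentially identical to the paper's. The one genuine difference is in the lower bound, in how you dispose of the leftover term $Ch^{-2}\cdot h^2\norm{h_e^{-1/2}\jump{\bv}}_{0,\Eh}^2$ coming from $\norm{\bv-\cR\bv}_0^2$: the paper keeps it as $C_0\norm{h_e^{-1/2}\jump{\bv}}_{0,\Eh}^2\le (C_0/\rho_1)\enorm{\bv}^2$ and absorbs it into the left-hand side, which is where the hypothesis ``$\rho_1$ sufficiently large'' enters and produces $c_2=(\rho_1-C_0)/(C\rho_1)$; you instead fold it into the $\rho_2$-weighted jump term on the right via $h^2h_e^{-1}\lesssim h_e$, which costs nothing on $\rho_1$ but does require the quasi-uniformity $h\lesssim h_e$ (an assumption the paper already uses implicitly in the inverse inequality of Lemma~\ref{lemma: norm_equiv}, so this is not a gap). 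Your phrasing ``absorbing constants into $\rho_2$'' should be tightened --- $\rho_2$ is fixed, so what actually happens is that the coefficient $(1+C/\rho_2)$ is absorbed into the generic constant $\tilde C$ defining $c_2=1/\tilde C$ --- but the argument is sound and arguably slightly cleaner, since it removes the need to enlarge $\rho_1$ at this particular step.
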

\begin{proof}
    It suffices to prove that $\norm{\cR\bv}_0\leq Ch\enorm{\bv}$ for the upper bound because $\norm{\bv}_0$ is replaced by $\norm{\cR\bv}_0$ in the norm $\trinorm{\bv}_\mathcal{R}$.
    Indeed, it follows from the triangle inequality, the error estimate \eqref{eqn: cR-err}, and the argument in the proof of Lemma~\ref{lemma: norm_equiv} that
    \begin{equation*}
        \norm{\cR\bv}_0 \leq \norm{\bv}_0 + \norm{\cR\bv-\bv}_0\leq \norm{\bv}_0+Ch\enorm{\bv}\leq Ch\enorm{\bv}.
    \end{equation*}
    Hence, we obtain
    \begin{equation*}
        \trinorm{\bv}_\mathcal{R}^2=\nu\enorm{\bv}^2 + \norm{\cR\bv}_{0}^2 +\rho_2 \norm{h_e^{1/2}  \jump{\bv}}_{0, \Eh}^2\leq C\left(\nu + h^2\left(\frac{\rho_2}{\rho_1}+1\right)\right)\enorm{\bv}^2.
    \end{equation*}
    For the lower bound, we recall the result in Lemma~\ref{lemma: norm_equiv} and apply \eqref{eqn: cR-err} to it,
    \begin{align*}
    \enorm{\bv}^2&\leq C h^{-2}\left(\norm{\bv}_0^2+\rho_2 \norm{h_e^{1/2}  \jump{\bv}}_{0, \Eh}^2\right)\\
    &\leq C h^{-2}\left(\norm{\cR\bv}_0^2+\norm{\cR\bv-\bv}_0^2+\rho_2 \norm{h_e^{1/2}  \jump{\bv}}_{0, \Eh}^2\right)\\
    &\leq Ch^{-2}\left(\norm{\cR\bv}_0^2+ h^2\norm{h_e^{-1/2}\jump{\bv}}_{0,\Eh}^2+\rho_2 \norm{h_e^{1/2}  \jump{\bv}}_{0, \Eh}^2\right)\\
    &=Ch^{-2}\left(\norm{\cR\bv}_0^2+\rho_2 \norm{h_e^{1/2}  \jump{\bv}}_{0, \Eh}^2\right)+C_0\norm{h_e^{-1/2}\jump{\bv}}_{0,\Eh}^2,
\end{align*}
where $C_0$ contains $\rho_1/\rho_2$ but is independent of $\nu$ and $h$.
Then, for a sufficiently large $\rho_1$, we have
\begin{equation*}
    \frac{\rho_1-C_0}{\rho_1}\enorm{\bv}^2\leq Ch^{-2}\left(\norm{\cR\bv}_0^2+\rho_2 \norm{h_e^{1/2}  \jump{\bv}}_{0, \Eh}^2\right).
\end{equation*}
     Therefore, we set $c_2=(\rho_1-C_0)/(C\rho_1)$ and assume $c_2<1$ to have
     \begin{equation*}
         c_2h^2\enorm{\bv}^2\leq \norm{\cR\bv}_0^2+\rho_2\norm{h_e^{1/2}\jump{\bv}}_{0,\Eh}^2,
     \end{equation*}
     which implies
     \begin{equation*}
         (\nu+c_2h^2)\enorm{\bv}\leq\trinorm{\bv}_{\cR}.
     \end{equation*}
\end{proof}

In addition, we prove the norm equivalence between $\trinorm{\bv}$ and $\trinorm{\bv}_{\cR}$ using the results in Lemma~\ref{lemma: norm_equiv}, Lemma~\ref{lemma: cR-err}, and Lemma~\ref{lemma: norm_equiv_ur}.
\begin{lemma}\label{cor: norm_eq_tri}
For any $\bv\in\Vh$, it holds
\begin{equation}\label{eqn: norm_eq_tri}
    c_*\trinorm{\bv}_{\cR}\leq \trinorm{\bv}\leq c^*\trinorm{\bv}_{\cR},
\end{equation}
where $c_*$ and $c^*$ are positive constants independent of $\nu$ and $h$.
\end{lemma}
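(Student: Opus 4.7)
The plan is to exploit the fact that $\trinorm{\cdot}$ and $\trinorm{\cdot}_{\cR}$ share the same viscous term $\nu\enorm{\bv}^2$ and the same jump penalty term $\rho_2\|h_e^{1/2}\jump{\bv}\|_{0,\Eh}^2$, and differ only in their $L^2$-type contribution ($\|\bv\|_0^2$ versus $\|\cR\bv\|_0^2$). Thus the entire equivalence reduces to comparing $\|\bv\|_0$ with $\|\cR\bv\|_0$, which is exactly what Lemma~\ref{lemma: cR-err} is designed for.

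For the upper bound, I would apply the triangle inequality followed by Lemma~\ref{lemma: cR-err}:
\begin{equation*}
\norm{\bv}_0 \leq \norm{\cR\bv}_0 + \norm{\bv - \cR\bv}_0 \leq \norm{\cR\bv}_0 + Ch\enorm{\bv}.
\end{equation*}
Squaring, adding the common viscous and jump terms to both sides, and using $(a+b)^2\le 2a^2+2b^2$ give
\begin{equation*}
\trinorm{\bv}^2 \leq 2\trinorm{\bv}_\cR^2 + 2Ch^2\enorm{\bv}^2.
\end{equation*}
The remaining $h^2\enorm{\bv}^2$ term is absorbed into $\trinorm{\bv}_\cR^2$ via the lower bound in Lemma~\ref{lemma: norm_equiv_ur}, which yields $h\enorm{\bv}\leq (1/\sqrt{c_2})\trinorm{\bv}_\cR$ uniformly in $\nu$. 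This produces a constant $c^*$ independent of $\nu$ and $h$.

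The lower bound is fully symmetric: the triangle inequality and Lemma~\ref{lemma: cR-err} give $\norm{\cR\bv}_0 \le \norm{\bv}_0 + Ch\enorm{\bv}$, and then Lemma~\ref{lemma: norm_equiv} provides $h\enorm{\bv}\leq (1/\sqrt{c_1})\trinorm{\bv}$, again uniformly in $\nu$, yielding a constant $c_*$ independent of $\nu$ and $h$. I do not anticipate any real obstacle here: the only subtle point is making sure that the constants extracted from $h\enorm{\bv}\leq C\trinorm{\bv}_{(\cR)}$ do not secretly depend on $\nu$, and this is guaranteed because both previous lemmas give the $\nu$-independent lower bound $\sqrt{c_i}\,h\,\enorm{\bv}\leq \sqrt{\nu+c_ih^2}\,\enorm{\bv}$ on each norm. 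Hence the proof is essentially a two-line triangle inequality argument followed by absorbing one term using the two previously proved norm equivalences.
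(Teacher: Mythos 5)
Your proposal is correct and follows essentially the same route as the paper: both directions reduce to comparing $\norm{\bv}_0$ with $\norm{\cR\bv}_0$ via the triangle inequality and Lemma~\ref{lemma: cR-err}, and the resulting $h^2\enorm{\bv}^2$ remainder is absorbed using the $\nu$-uniform lower bounds $\sqrt{c_1}\,h\enorm{\bv}\leq\trinorm{\bv}$ and $\sqrt{c_2}\,h\enorm{\bv}\leq\trinorm{\bv}_{\cR}$ from Lemmas~\ref{lemma: norm_equiv} and~\ref{lemma: norm_equiv_ur}. Your explicit remark on why the absorbed constants stay independent of $\nu$ is exactly the point the paper's proof relies on implicitly.
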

\begin{proof}
    It follows from the results in Lemma~\ref{lemma: cR-err} and Lemma~\ref{lemma: norm_equiv} that
    \begin{equation*}
        \nu\enorm{\bv}^2+\norm{\cR\bv}_0^2\leq C\left(\nu\enorm{\bv}^2+c_1h^2\enorm{\bv}^2+\norm{\bv}_0^2\right)\leq C\trinorm{\bv}^2.
    \end{equation*}
    Similarly, from Lemma~\ref{lemma: cR-err} and Lemma~\ref{lemma: norm_equiv_ur}, we obtain
    \begin{equation*}
        \nu\enorm{\bv}^2+\norm{\bv}_0^2\leq C\left(\nu\enorm{\bv}^2+c_2h^2\enorm{\bv}^2+\norm{\cR\bv}_0^2\right)\leq C\trinorm{\bv}^2_{\cR}.
    \end{equation*}
\end{proof}

\subsection{Well-posedness}

Most of the results for the well-posedness of the \texttt{PR-EG} method are similar to those of the \texttt{ST-EG} method. Thus, we briefly state and prove the results concerning $\trinorm{\cdot}_\mathcal{R}$ in this subsection.

\begin{lemma}
For any $\bv,\bw\in \mathbf{V}_h$, the coercivity and continuity results hold:
\begin{align}
    \nu\ba(\bv,\bv)+\tilde{\mathbf{c}}(\bv,\bv)&\geq K_1\trinorm{\bv}^2_\mathcal{R},\label{eqn: ur_ac_coer}\\
    |\nu\ba(\bv,\bw)+\tilde{\mathbf{c}}(\bv,\bw)|&\leq K_2\trinorm{\bv}_{\mathcal{R}}\trinorm{\bw}_{\mathcal{R}},\label{eqn: ur_ac_conti}
\end{align}
where $K_1=\min(\kappa_1,1)$ and $K_2=\max(\kappa_2,1)$.
\end{lemma}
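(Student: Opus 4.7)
The plan is to prove this lemma by mirroring the argument of Lemma~\ref{lemma: coer_conti} almost verbatim, with the only substantive change being that the $L^2$ term $(\bv,\bw)_{\Th}$ is replaced by $(\cR\bv,\cR\bw)_{\Th}$; this change pairs naturally with the norm $\trinorm{\cdot}_{\mathcal{R}}$, which was designed precisely so that $\norm{\cR\bv}_0$ sits in place of $\norm{\bv}_0$.

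For the coercivity \eqref{eqn: ur_ac_coer}, first I would set $\bw=\bv$ and expand
\[
\nu\ba(\bv,\bv)+\tilde{\mathbf{c}}(\bv,\bv)=\nu\ba(\bv,\bv)+\norm{\cR\bv}_0^2+\rho_2\norm{h_e^{1/2}\jump{\bv}}_{0,\Eh}^2.
\]
Then I would invoke the coercivity \eqref{eqn: ba_coer} of $\ba(\cdot,\cdot)$ to bound the first term below by $\kappa_1\nu\enorm{\bv}^2$. Pulling out the worst constant $K_1=\min(\kappa_1,1)$ from the three remaining non-negative pieces yields exactly $K_1\trinorm{\bv}_{\mathcal{R}}^2$ by the definition of $\trinorm{\cdot}_{\mathcal{R}}$. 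No new ingredient beyond what is already used in Lemma~\ref{lemma: coer_conti} is needed.

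For the continuity \eqref{eqn: ur_ac_conti}, I would apply the continuity \eqref{eqn: ba_conti} of $\ba(\cdot,\cdot)$ to the first term, and the Cauchy--Schwarz inequality to the two pieces of $\tilde{\mathbf{c}}(\bv,\bw)$, obtaining
\[
|\nu\ba(\bv,\bw)+\tilde{\mathbf{c}}(\bv,\bw)|\leq \kappa_2\nu\enorm{\bv}\enorm{\bw}+\norm{\cR\bv}_0\norm{\cR\bw}_0+\rho_2\norm{h_e^{1/2}\jump{\bv}}_{0,\Eh}\norm{h_e^{1/2}\jump{\bw}}_{0,\Eh}.
\]
Treating this as an inner product of the three-component vectors $(\sqrt{\nu}\enorm{\bv},\norm{\cR\bv}_0,\sqrt{\rho_2}\norm{h_e^{1/2}\jump{\bv}}_{0,\Eh})$ and its $\bw$-counterpart and applying Cauchy--Schwarz in $\mathbb{R}^3$, the factor $K_2=\max(\kappa_2,1)$ bounds the result by $K_2\trinorm{\bv}_{\mathcal{R}}\trinorm{\bw}_{\mathcal{R}}$.

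There is no real obstacle here: the only thing to be careful about is confirming that the definition of $\trinorm{\cdot}_{\mathcal{R}}$ indeed uses $\norm{\cR\bv}_0$ (not $\norm{\bv}_0$), so that the identity $\tilde{\mathbf{c}}(\bv,\bv)=\norm{\cR\bv}_0^2+\rho_2\norm{h_e^{1/2}\jump{\bv}}_{0,\Eh}^2$ aligns term-by-term with the norm. The norm equivalences proved in Lemmas~\ref{lemma: norm_equiv_ur} and \ref{cor: norm_eq_tri} are not needed for this lemma itself; they will become relevant only in the subsequent inf-sup and error analysis steps for the \texttt{PR-EG} method.
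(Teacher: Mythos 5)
Your proof is correct and follows exactly the argument the paper intends: the paper simply states that the proof is identical to that of Lemma~\ref{lemma: coer_conti}, and your write-up supplies precisely those details, with $(\cR\bv,\cR\bw)_{\Th}$ pairing with the $\norm{\cR\bv}_0^2$ term in $\trinorm{\cdot}_{\mathcal{R}}$ in place of $(\bv,\bw)_{\Th}$ and $\norm{\bv}_0^2$. No gaps; the observation that the norm equivalences are not needed here is also accurate.
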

\begin{proof}
    The proof is the same as that of Lemma~\ref{lemma: coer_conti}, so we omit the details here.
\end{proof}

\begin{lemma} \label{thm: ur_dis inf-sup}
Assume that the penalty parameters $\rho_1$ and $\rho_2$ are sufficiently large.
Then, we have
\begin{equation}
\sup_{\bv\in\Vh}\frac{\bb(\bv,q)}{\trinorm{\bv}_\mathcal{R}}\geq C_{1}\norm{q}_0,\quad \forall q\in Q_h,\label{eqn: ur_inf-sup}
\end{equation}
for $C_{1}=C_{\textsc{is}}/C_{\textsc{ne}}$ defined in Lemma~\ref{thm: dis inf-sup}.
\end{lemma}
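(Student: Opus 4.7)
The plan is to mimic the argument used for Lemma~\ref{thm: dis inf-sup} in the \texttt{ST-EG} section, since the only structural change is that $\trinorm{\cdot}$ has been replaced by $\trinorm{\cdot}_{\cR}$. The starting point is the known discrete inf-sup condition for the pair $\Vh\times Q_h$ in the discrete $H^1$-norm $\enorm{\cdot}$ from \cite{YiEtAl22Stokes}, namely
\begin{equation*}
C_{\textsc{is}}\,\norm{q}_0 \;\leq\; \sup_{\bv\in\Vh}\frac{\bb(\bv,q)}{\enorm{\bv}},\qquad \forall q\in Q_h,
\end{equation*}
with $C_{\textsc{is}}$ independent of $\nu$ and $h$.

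Next, I would invoke the upper bound $\trinorm{\bv}_{\cR}\leq C_{\textsc{ne}}\enorm{\bv}$ proved in Lemma~\ref{lemma: norm_equiv_ur}. Rearranging this gives $1/\enorm{\bv}\leq C_{\textsc{ne}}/\trinorm{\bv}_{\cR}$ for every nonzero $\bv\in\Vh$, so for each fixed $q\in Q_h$
\begin{equation*}
\frac{\bb(\bv,q)}{\enorm{\bv}} \;\leq\; C_{\textsc{ne}}\,\frac{\bb(\bv,q)}{\trinorm{\bv}_{\cR}},
\end{equation*}
after taking the supremum over $\bv\in\Vh$ on both sides. Dividing the resulting chain of inequalities by $C_{\textsc{ne}}>0$ and setting $C_1:=C_{\textsc{is}}/C_{\textsc{ne}}$ then yields \eqref{eqn: ur_inf-sup}.

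There is essentially no obstacle here: the work has already been done in establishing Lemma~\ref{lemma: norm_equiv_ur}, which is where the $\cR$-operator and the assumption of sufficiently large $\rho_1$ (and $\rho_2$) actually entered. The assumption on the penalty parameters $\rho_1,\rho_2$ in the statement is exactly what guarantees the upper bound constant $C_{\textsc{ne}}$ of Lemma~\ref{lemma: norm_equiv_ur} is well-defined and independent of $\nu,h$; no additional stabilization or modification of the test function is needed, because $\bb(\cdot,\cdot)$ is unchanged between Algorithms~\ref{alg:EG} and~\ref{alg:UREG}. Hence the proof is a one-line reduction to Lemma~\ref{lemma: norm_equiv_ur} combined with the Stokes inf-sup of \cite{YiEtAl22Stokes}, precisely parallel to the proof of Lemma~\ref{thm: dis inf-sup}.
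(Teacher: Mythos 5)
Your proof is correct and follows essentially the same route as the paper: the paper likewise cites the discrete inf-sup condition of \cite{YiEtAl22Stokes} in the $\enorm{\cdot}$ norm and then applies the upper bound $\trinorm{\bv}_{\cR}\leq C_{\textsc{ne}}\enorm{\bv}$ from Lemma~\ref{lemma: norm_equiv_ur} to obtain the chain $C_{\textsc{is}}\norm{q}_0\leq \sup_{\bv}\bb(\bv,q)/\enorm{\bv}\leq C_{\textsc{ne}}\sup_{\bv}\bb(\bv,q)/\trinorm{\bv}_{\cR}$. Your added remarks on where the penalty-parameter assumption enters are accurate and consistent with the paper's presentation.
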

\begin{proof}
Similar to the proof of Lemma~\ref{thm: dis inf-sup}, the discrete inf-sup condition in \cite{YiEtAl22Stokes} and the upper bound of $\trinorm{\bv}_\mathcal{R}$ in \eqref{eqn: ur_norm_equiv} imply
\begin{equation*}
    C_{\textsc{is}}\norm{q}_0\leq \sup_{\bv\in\Vh}\frac{\bb(\bv,q)}{\enorm{\bv}}\leq C_{\textsc{ne}}\sup_{\bv\in\Vh}\frac{\bb(\bv,q)}{\trinorm{\bv}_\mathcal{R}}.
\end{equation*}
\end{proof}

\begin{lemma}
For any $\bv\in\Vh$ and $q\in Q_h$, it holds
\begin{equation}
    |\bb(\bv,q)|\leq \frac{C}{\sqrt{\nu+c_2 h^2}}\norm{q}_0\trinorm{\bv}_\mathcal{R},\label{eqn: ur_contib}
\end{equation}
for a generic positive constant $C$ independent of $\nu$ and $h$.
\end{lemma}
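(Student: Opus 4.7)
The plan is to mirror the proof of Lemma~\ref{lemma: conti_b} almost verbatim, just replacing the norm equivalence from Lemma~\ref{lemma: norm_equiv} with the analogous one from Lemma~\ref{lemma: norm_equiv_ur}. The key observation is that the bilinear form $\bb(\cdot,\cdot)$ does not involve the reconstruction operator $\cR$ at all, so its continuity with respect to $\enorm{\cdot}$ is exactly the same as in the standard EG setting; only the norm on the velocity side differs between $\trinorm{\cdot}$ and $\trinorm{\cdot}_{\cR}$.

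First I would invoke the continuity of $\bb(\cdot,\cdot)$ from \cite{YiEtAl22Stokes}, which yields
\begin{equation*}
    |\bb(\bv,q)|\leq C\norm{q}_0\enorm{\bv}
\end{equation*}
for a constant $C$ independent of $\nu$ and $h$. Then I would apply the lower bound from the norm equivalence \eqref{eqn: ur_norm_equiv}, namely $\sqrt{\nu+c_2 h^2}\,\enorm{\bv}\leq \trinorm{\bv}_{\cR}$, to obtain $\enorm{\bv}\leq (\nu+c_2 h^2)^{-1/2}\trinorm{\bv}_{\cR}$. Substituting this into the continuity bound gives
\begin{equation*}
    |\bb(\bv,q)|\leq \frac{C}{\sqrt{\nu+c_2 h^2}}\norm{q}_0\trinorm{\bv}_{\cR},
\end{equation*}
which is the desired estimate.

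There is really no obstacle here: the nontrivial work has already been done in establishing Lemma~\ref{lemma: norm_equiv_ur}, where the reconstruction-based lower bound $c_2 h^2 \enorm{\bv}^2 \leq \norm{\cR\bv}_0^2 + \rho_2\norm{h_e^{1/2}\jump{\bv}}_{0,\Eh}^2$ required using Lemma~\ref{lemma: cR-err} to compare $\norm{\bv}_0$ with $\norm{\cR\bv}_0$ and absorbing a jump term via sufficiently large $\rho_1$. Given that lemma, the present statement is an immediate two-line corollary, so the proof will be very short.
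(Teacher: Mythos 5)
Your proposal is correct and follows exactly the paper's argument: invoke the continuity $|\bb(\bv,q)|\leq C\norm{q}_0\enorm{\bv}$ from \cite{YiEtAl22Stokes} and then bound $\enorm{\bv}$ from above by $(\nu+c_2h^2)^{-1/2}\trinorm{\bv}_{\cR}$ via the lower bound in \eqref{eqn: ur_norm_equiv}. Your observation that all the real work lives in Lemma~\ref{lemma: norm_equiv_ur} is also exactly how the paper treats it.
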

\begin{proof}
Similar to the proof of Lemma~\ref{lemma: conti_b}, this result is proved by the continuity of $\bb(\cdot,\cdot)$ in \cite{YiEtAl22Stokes} and the upper bound of $\enorm{\bv}$ in \eqref{eqn: ur_norm_equiv}.
\end{proof}

Finally, we obtain the well-posedness of the \texttt{PR-EG} method in Algorithm~\ref{alg:UREG}.

\begin{theorem}
There exists a unique solution $(\buh,\ph)\in \Vh\times Q_h$ to the \texttt{PR-EG} method.
\end{theorem}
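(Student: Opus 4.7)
The plan is to mirror exactly the structure of the proof of Theorem~\ref{thm: well_posed} for the \texttt{ST-EG} method, using the analogous tools already established for $\trinorm{\cdot}_{\mathcal{R}}$. Since $\Vh$ and $Q_h$ are finite-dimensional, existence follows from uniqueness, so it suffices to show that the homogeneous problem (with $\bbf=\mathbf{0}$) admits only the trivial solution $(\bu_h,p_h)=(\mathbf{0},0)$.

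First I would take $\bv = \bu_h$ in \eqref{eqn: robust-eg1} and $q = p_h$ in \eqref{eqn: robust-eg2} and add the two equations. The term $\bb(\bu_h, p_h)$ cancels exactly, leaving
\begin{equation*}
\nu\,\ba(\bu_h,\bu_h) + \tilde{\mathbf{c}}(\bu_h,\bu_h) = 0.
\end{equation*}
Applying the coercivity result \eqref{eqn: ur_ac_coer} then yields $K_1\trinorm{\bu_h}_{\mathcal{R}}^2 \le 0$, so $\trinorm{\bu_h}_{\mathcal{R}} = 0$. By the norm equivalence \eqref{eqn: ur_norm_equiv}, this forces $\enorm{\bu_h} = 0$, and since $\enorm{\cdot}$ is a norm on $\Vh$ (as $\bu_h$ has vanishing gradient and vanishing jumps, hence is a global $H^1_0$ constant, i.e. zero), we conclude $\bu_h = \mathbf{0}$.

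With $\bu_h = \mathbf{0}$ substituted into \eqref{eqn: robust-eg1} under the homogeneous data, we obtain $\bb(\bv, p_h) = 0$ for every $\bv \in \Vh$. The inf-sup condition \eqref{eqn: ur_inf-sup} then gives
\begin{equation*}
C_1\,\norm{p_h}_0 \le \sup_{\bv\in\Vh}\frac{\bb(\bv,p_h)}{\trinorm{\bv}_{\mathcal{R}}} = 0,
\end{equation*}
which forces $p_h = 0$, completing the uniqueness argument and hence the proof.

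I do not anticipate any real obstacle here: every ingredient (coercivity, inf-sup stability, and the norm equivalences relating $\trinorm{\cdot}_{\mathcal{R}}$, $\trinorm{\cdot}$, and $\enorm{\cdot}$) has been established in the preceding lemmas, and the derivation is structurally identical to that of Theorem~\ref{thm: well_posed}. The only minor point worth noting is that $\trinorm{\cdot}_{\mathcal{R}}$ uses $\norm{\mathcal{R}\bv}_0$ rather than $\norm{\bv}_0$, but Lemma~\ref{cor: norm_eq_tri} shows these two energy norms are equivalent, so vanishing of $\trinorm{\bu_h}_{\mathcal{R}}$ still implies $\bu_h = \mathbf{0}$ through $\enorm{\bu_h}=0$. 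Given the close parallel to the \texttt{ST-EG} case, the proof can simply refer back to that argument and the cited lemmas rather than being written out in detail.
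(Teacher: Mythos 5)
Your proposal is correct and follows essentially the same route as the paper, which itself just invokes the argument of Theorem~\ref{thm: well_posed} verbatim with $\trinorm{\cdot}$ replaced by $\trinorm{\cdot}_{\mathcal{R}}$ and the corresponding coercivity and inf-sup lemmas. Your extra remark that $\trinorm{\bu_h}_{\mathcal{R}}=0$ forces $\bu_h=\mathbf{0}$ via the lower bound $\sqrt{\nu+c_2h^2}\,\enorm{\bu_h}\leq\trinorm{\bu_h}_{\mathcal{R}}$ (rather than through $\norm{\cR\bu_h}_0$, which alone could not rule out a kernel of $\cR$) is a worthwhile detail the paper leaves implicit.
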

\begin{proof}
    The proof is the same as Theorem~\ref{thm: well_posed}, so we omit the details here.
\end{proof}

\subsection{Error estimates}

We recall the error functions
\begin{equation*}
    \bm{\chi}_h:=\bu-\Pi_h\bu,\quad\mathbf{e}_h:=\Pi_h\bu-\bu_h,\quad\xi_h:=p-\Pz p,\quad\epsilon_h:=\Pz p-p_h,
\end{equation*}
where $(\bu,p)\in [H_0^1(\Omega)\cap H^2(\Omega)]^d\times [L_0^2(\Omega)\cap H^1(\Omega)]$ is the solution to \eqref{eqn: governing1}-\eqref{eqn: governing3}.
Then, we derive error equations for the \texttt{PR-EG} method.

\begin{lemma}\label{lemma: ur_erreqn}
For any $\bv\in\Vh$ and $q\in Q_h$, we have
\begin{subequations}\label{sys: ur_erreqn}
\begin{alignat}{2}
\nu\ba(\be_h,\bv)+\tilde{\bc}(\be_h,\bv)-\bb(\bv,\epsilon_h)&=l_1(\bu,\bv)+l_3(\bu,\bv)+l_4(\bu,\bv)+\mathbf{s}(\Pi_h\bu,\bv),\label{eqn: ur_erreqn1}\\
\bb(\be_h,q)&=-\bb(\bm{\chi}_h,q),\label{eqn: ur_erreqn2}
\end{alignat}
\end{subequations}
where $l_1(\bu,\bv)$ and $\mathbf{s}(\Pi_h\bu,\bv)$ are defined in Lemma~\ref{lemma: eg_erreqn}, and the other supplemental bilinear forms are defined as follows:
\begin{align*}
    &l_3(\bu,\bv):=\nu(\Delta\bu, \cR\bv-\bv)_{\Th},\\
    &l_4(\bu,\bv):=(\cR\Pi_h\bu-\bu,\cR\bv)_{\Th}.
\end{align*}
\end{lemma}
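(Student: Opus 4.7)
The plan is to mirror the derivation of Lemma~\ref{lemma: eg_erreqn}, but starting from the continuous momentum equation tested against $\cR\bv$ in place of $\bv$. Fixing $\bv\in\Vh$ and pairing \eqref{eqn: scaled_Brinkman1} with $\cR\bv$ yields
\[
-\nu(\Delta\bu,\cR\bv)_{\Th}+(\bu,\cR\bv)_{\Th}+(\nabla p,\cR\bv)_{\Th}=(\bbf,\cR\bv),
\]
which I would treat term by term and then compare against \eqref{eqn: robust-eg1}.

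For the viscous term, I would split $-\nu(\Delta\bu,\cR\bv)_{\Th}=-\nu(\Delta\bu,\bv)_{\Th}+\nu(\Delta\bu,\bv-\cR\bv)_{\Th}$. The identity $-(\Delta\bu,\bv)_{\Th}=\ba(\bu,\bv)$ borrowed from \cite{YiEtAl22Stokes}, combined with the decomposition $\bu=\Pi_h\bu-(\Pi_h\bu-\bu)$, recasts the first piece as $\nu\ba(\Pi_h\bu,\bv)-l_1(\bu,\bv)$, while the second piece is exactly $-l_3(\bu,\bv)$. For the Darcy term, inserting $\cR\Pi_h\bu$ gives $(\bu,\cR\bv)_{\Th}=(\cR\Pi_h\bu,\cR\bv)_{\Th}-(\cR\Pi_h\bu-\bu,\cR\bv)_{\Th}$, and the definition of $\tilde{\mathbf{c}}$ in \eqref{eqn: robust-bic} rewrites the first summand as $\tilde{\mathbf{c}}(\Pi_h\bu,\bv)-\mathbf{s}(\Pi_h\bu,\bv)$, leaving $-l_4(\bu,\bv)$.

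The pressure term is the main obstacle, and the key is to exploit the defining moments of $\cR$. My plan is to integrate by parts element-wise: the boundary moments in \eqref{sys: BDM} force $\cR\bv\cdot\bn_e=0$ on $\Ehb$ (since $\cR\bv\cdot\bn_e|_e\in P_1(e)$ is orthogonal to the whole of $P_1(e)$), and the global $H(\text{div},\Omega)$-conformity of $\mathcal{B}DM_1(\Th)$ guarantees continuity of the normal trace across interior edges. Paired with the continuity of $p\in H^1(\Omega)$, the interior boundary contributions cancel and the boundary-edge contribution vanishes, leaving $(\nabla p,\cR\bv)_{\Th}=-(p,\nabla\cdot\cR\bv)_{\Th}$. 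Since $\nabla\cdot\cR\bv|_T\in P_0(T)$, the mean-zero property of $p-\Pz p$ allows me to replace $p$ by $\Pz p$; an edge-by-edge computation using the interior moments \eqref{sys: BDM} with $p_1\equiv 1$ together with \eqref{eqn: bib} then verifies the identity $(\nabla\cdot\cR\bv,q)_{\Th}=\bb(\bv,q)$ for every $q\in Q_h$. Applied with $q=\Pz p$, this produces $(\nabla p,\cR\bv)_{\Th}=-\bb(\bv,\Pz p)$.

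Collecting the three computations, the tested equation becomes
\[
\nu\ba(\Pi_h\bu,\bv)+\tilde{\mathbf{c}}(\Pi_h\bu,\bv)-\bb(\bv,\Pz p)=(\bbf,\cR\bv)+l_1(\bu,\bv)+l_3(\bu,\bv)+l_4(\bu,\bv)+\mathbf{s}(\Pi_h\bu,\bv);
\]
subtracting \eqref{eqn: robust-eg1} cancels $(\bbf,\cR\bv)$ and delivers \eqref{eqn: ur_erreqn1} by the definitions of $\be_h$ and $\epsilon_h$. The divergence relation \eqref{eqn: ur_erreqn2} then follows at once: continuity of $\bu$ and $\nabla\cdot\bu=0$ give $\bb(\bu,q)=(\nabla\cdot\bu,q)_{\Th}=0$, so subtracting \eqref{eqn: robust-eg2} yields $\bb(\bm{\chi}_h+\be_h,q)=0$, i.e., $\bb(\be_h,q)=-\bb(\bm{\chi}_h,q)$.
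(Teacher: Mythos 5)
Your proposal is correct and follows essentially the same route as the paper: split the viscous term via $-(\Delta\bu,\bv)_{\Th}=\ba(\bu,\bv)$ to isolate $l_1$ and $l_3$, insert $\cR\Pi_h\bu$ into the Darcy term to isolate $l_4$ and $\mathbf{s}$, integrate the pressure term by parts using the $H(\mathrm{div})$-conformity of $\cR\bv$ and the piecewise-constant divergence to obtain $(\nabla p,\cR\bv)_{\Th}=-\bb(\bv,\Pz p)$, and compare with the scheme. Your edge-by-edge verification of $(\nabla\cdot\cR\bv,q)_{\Th}=\bb(\bv,q)$ for $q\in Q_h$ simply makes explicit a step the paper states without detail, and it checks out.
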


\begin{proof}
Since $-(\Delta\bu,\bv)
_{\Th}=\ba(\bu,\bv)$ for any $\bv\in\Vh$, we have
\begin{align*}
    -\nu(\Delta\bu,\cR\bv)_{\Th}&=-\nu(\Delta\bu,\bv)_{\Th}-\nu(\Delta\bu,\cR\bv-\bv)_{\Th}\\
    &=\nu\ba(\bu,\bv)-\nu(\Delta\bu,\cR\bv-\bv)_{\Th}\\
    &=\nu\ba(\Pi_h\bu,\bv)-\nu\ba(\Pi_h\bu-\bu,\bv)-\nu(\Delta\bu,\cR\bv-\bv)_{\Th}.
\end{align*}
By the definition of $\tilde{\bc}(\cdot,\cdot)$, we also have
\begin{align*}
    (\bu,\cR\bv)_{\Th}&=(\cR\Pi_h\bu,\cR\bv)_{\Th}-(\cR\Pi_h\bu-\bu,\cR\bv)_{\Th}\\
    &=\tilde{\bc}(\Pi_h\bu,\bv)-(\cR\Pi_h\bu-\bu,\cR\bv)_{\Th}-\rho_2\langle h_e\jump{\Pi_h\bu},\jump{\bv}\rangle_{\Eh}.
\end{align*}
Since $\cR\bv\cdot\bn$ is continuous on $\partial T$ and $\nabla\cdot\cR\bv$ is constant in $T$, integration by parts implies
\begin{equation*}
    (\nabla p,\cR\bv)_{\Th} = -\bb(\bv,\Pz p).
\end{equation*}
Hence, we obtain the following equation from \eqref{eqn: governing1},
\begin{equation*}
    \nu\ba(\Pi_h\bu,\bv)+\tilde{\bc}(\Pi_h\bu,\bv)-\bb(\bv,\Pz p)=(\bbf,\cR\bv)+l_1(\bu,\bv)+l_3(\bu,\bv)+l_4(\bu,\bv)+\mathbf{s}(\Pi_h\bu,\bv).
\end{equation*}
If we compare this equation with \eqref{eqn: robust-eg1} in the \texttt{PR-EG} method, then we arrive at
\begin{equation*}
    \nu\ba(\be_h,\bv)+\tilde{\bc}(\be_h,\bv)-\bb(\bv,\epsilon_h)=l_1(\bu,\bv)+l_3(\bu,\bv)+l_4(\bu,\bv)+\mathbf{s}(\Pi_h\bu,\bv).
\end{equation*}
For the second equation \eqref{eqn: ur_erreqn2}, the continuity of $\bu$ and \eqref{eqn: robust-eg2} in the \texttt{PR-EG} method lead us to
\begin{equation*}
    (\nabla\cdot\bu,q)_{\Th}=\bb(\bu,q)=0=\bb(\bu_h,q).
\end{equation*}

\end{proof}

We present estimates for the supplementary bilinear forms used in Lemma~\ref{lemma: ur_erreqn}.

\begin{lemma}\label{lemma: ur_supplement estimates}
Assume that $\bw\in[H^2(\Omega)]^d$ and $\bv\in \Vh$. Then, we have
\begin{subequations}\label{sys: ur_suppest}
\begin{alignat}{2}
& \left|l_1(\bw,\bv)\right|\leq C\sqrt{\nu}h\norm{\bw}_2\trinorm{\bv}_\mathcal{R},\label{eqn: ur_suppest1} \\
& \left|l_3(\bw,\bv)\right|\leq C\sqrt{\nu}h\norm{\bw}_2\trinorm{\bv}_\mathcal{R},\label{eqn: ur_suppest2} \\
& \left|l_4(\bw,\bv)\right|\leq C h\norm{\bw}_2\trinorm{\bv}_\mathcal{R},\label{eqn: ur_suppest3}\\
& \left|\mathbf{s}(\Pi_h\bw,\bv)\right|\leq C h^2\norm{\bw}_2\trinorm{\bv}_\mathcal{R},\label{eqn: ur_suppest4}
\end{alignat}
\end{subequations}
where $C$ is a generic positive constant independent of $\nu$ and $h$ and may vary in each case.
\end{lemma}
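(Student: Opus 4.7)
The proof is parallel in structure to Lemma~\ref{lemma: eg_supp_est}, so the plan is to handle each of the four estimates in turn, reusing the continuity of $\ba(\cdot,\cdot)$, the interpolation bounds \eqref{sys: Pih}, the trace inequality \eqref{eqn: trace}, and the reconstruction bound \eqref{eqn: cR-err}. The only real new ingredient compared to the standard case is the norm equivalence \eqref{eqn: ur_norm_equiv}, which replaces \eqref{eqn: norm_equiv}, together with the need to control the reconstruction error $\cR\bv-\bv$ and the composite error $\cR\Pi_h\bw-\bw$.

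For \eqref{eqn: ur_suppest1}, I would proceed exactly as in the proof of \eqref{eqn: eg_suppest1}: apply the continuity \eqref{eqn: ba_conti} and the interpolation estimate \eqref{eqn: Pih_energy_err} to get $|l_1(\bw,\bv)|\leq C\nu h\norm{\bw}_2\enorm{\bv}$, then convert $\enorm{\bv}$ to $\trinorm{\bv}_{\cR}$ via the lower bound $\sqrt{\nu}\enorm{\bv}\leq \trinorm{\bv}_{\cR}$ from \eqref{eqn: ur_norm_equiv}, which absorbs one factor of $\sqrt{\nu}$. For \eqref{eqn: ur_suppest4}, the proof is essentially identical to that of \eqref{eqn: eg_suppest4}: note $\jump{\bw}=0$ on interior edges since $\bw\in H^2$, apply Cauchy--Schwarz in the edge inner product, and combine the trace inequality with \eqref{eqn: Pih_err}; the factor $\trinorm{\bv}_{\cR}$ appears since $\rho_2\norm{h_e^{1/2}\jump{\bv}}_{0,\Eh}^2$ is part of the new energy norm.

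The two genuinely new terms are $l_3$ and $l_4$. For \eqref{eqn: ur_suppest2}, I would apply Cauchy--Schwarz to get $|l_3(\bw,\bv)|\leq \nu\norm{\Delta\bw}_0\norm{\cR\bv-\bv}_0$, then invoke the reconstruction error estimate \eqref{eqn: cR-err} from Lemma~\ref{lemma: cR-err} to obtain $\norm{\cR\bv-\bv}_0\leq Ch\enorm{\bv}$, and again trade one $\sqrt{\nu}$ for $\trinorm{\bv}_{\cR}$ through \eqref{eqn: ur_norm_equiv}. For \eqref{eqn: ur_suppest3}, which is where I expect the main obstacle, I would apply Cauchy--Schwarz to get $|l_4(\bw,\bv)|\leq \norm{\cR\Pi_h\bw-\bw}_0\norm{\cR\bv}_0$ and then split via the triangle inequality:
\begin{equation*}
\norm{\cR\Pi_h\bw-\bw}_0\leq \norm{\cR\Pi_h\bw-\Pi_h\bw}_0+\norm{\Pi_h\bw-\bw}_0.
\end{equation*}
The second piece is $O(h^2)\norm{\bw}_2$ by \eqref{eqn: Pih_err}. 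For the first piece, Lemma~\ref{lemma: cR-err} yields $\norm{\cR\Pi_h\bw-\Pi_h\bw}_0\leq Ch\enorm{\Pi_h\bw}$, after which the stability \eqref{eqn: Pih_stability} gives $\enorm{\Pi_h\bw}\leq C|\bw|_1\leq C\norm{\bw}_2$; so this piece is $O(h)\norm{\bw}_2$ and dominates. Finally $\norm{\cR\bv}_0\leq \trinorm{\bv}_{\cR}$ by definition, yielding the claimed bound.

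The only subtle point worth double-checking is that the $l_4$ estimate is only first order in $h$, while $l_1$, $l_3$ carry an extra $\sqrt{\nu}$; this is exactly what makes the robust method's error bound pressure-independent, and it is the $l_4$ term whose order drives the final convergence rate. No other steps require careful balancing: the remaining arithmetic is routine Cauchy--Schwarz together with the interpolation and trace estimates already stated.
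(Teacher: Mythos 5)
Your proposal is correct and follows essentially the same route as the paper: Cauchy--Schwarz with \eqref{eqn: cR-err} and \eqref{eqn: ur_norm_equiv} for $l_3$, the triangle-inequality split of $\cR\Pi_h\bw-\bw$ combined with \eqref{eqn: Pih_stability} and \eqref{eqn: Pih_err} for $l_4$, and the earlier Lemma~\ref{lemma: eg_supp_est} bounds carried over to $\trinorm{\cdot}_\mathcal{R}$ for $l_1$ and $\mathbf{s}$ (the paper routes this last step through the equivalence \eqref{eqn: norm_eq_tri} rather than directly through \eqref{eqn: ur_norm_equiv}, but the two are interchangeable here).
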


\begin{proof}
The estimates \eqref{eqn: ur_suppest1} and \eqref{eqn: ur_suppest4} are proved by the estimate in Lemma~\ref{lemma: eg_supp_est} and the norm equivalence \eqref{eqn: norm_eq_tri}.
On the other hand, the Cauchy-Schwarz inequality, \eqref{eqn: cR-err}, and \eqref{eqn: ur_norm_equiv} lead to
\begin{align*}
    \left|l_3(\bw,\bv)\right|&=\left|\nu(\Delta\bw, \cR\bv-\bv)_{\Th}\right|\\
    &\leq \nu\norm{\bw}_2\norm{\cR\bv-\bv}_0\\
    &\leq C\nu h\norm{\bw}_2\enorm{\bv}\\
    &\leq C\sqrt{\nu}h\norm{\bw}_2\trinorm{\bv}_\mathcal{R}.
\end{align*}
Using the Cauchy-Schwarz inequality, \eqref{eqn: cR-err}, \eqref{eqn: Pih_stability}, and \eqref{eqn: Pih_err},
we get the following upper bounds,
\begin{align*}
    \left|l_4(\bw,\bv)\right|&=\left|(\cR\Pi_h\bw-\bw,\cR\bv)_{\Th}\right|\\
    &\leq \left|(\cR\Pi_h\bw-\Pi_h\bw,\cR\bv)_{\Th}\right|+\left|(\Pi_h\bw-\bw,\cR\bv)_{\Th}\right|\\
    &\leq Ch\enorm{\Pi_h\bw}\norm{\cR\bv}_0+\norm{\Pi_h\bw-\bw}_0\norm{\cR\bv}_0\\
    &\leq Ch|\bw|_1\trinorm{\bv}_\mathcal{R}.
\end{align*}
\end{proof}

Hence, we prove error estimates of the \texttt{PR-EG} method in Algorithm~\ref{alg:UREG}.

\begin{theorem}\label{thm: ur_error_est1}
Let $(\bu,p)\in [H_0^1(\Omega)\cap H^2(\Omega)]^d\times [L_0^2(\Omega)\cap H^1(\Omega)]$ be the solution to \eqref{eqn: governing1}-\eqref{eqn: governing3}, and $(\buh,p_h)\in \Vh\times Q_h$ be the discrete solution from the \texttt{PR-EG} method. Then, we have the following pressure-robust error estimates
 \begin{align*}
      &\trinorm{\Pi_h\bu-\bu_h}_\mathcal{R}\leq Ch(\sqrt{\nu}+1)\norm{\bu}_2,\\
      &\norm{\mathcal{P}_0p-p_h}_0\leq C h(\nu+\sqrt{\nu})\norm{\bu}_2 + Ch^2\norm{\bu}_2.
  \end{align*}
\end{theorem}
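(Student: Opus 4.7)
The plan is to follow the strategy used for Theorem~\ref{thm: eg_error_est1}, but with the crucial simplification that the error equation \eqref{sys: ur_erreqn} for \texttt{PR-EG} contains no term of the form $\bb(\bv,\xi_h)$. This absence is exactly what makes the method pressure-robust: when $\nabla p$ is tested against $\cR\bv$, integration by parts produces $\bb(\bv,\Pz p)$ directly, and the consistency error that previously coupled $\norm{p}_1$ into the velocity bound is eliminated. Consequently, all right-hand side contributions in the error equation come from the interpolation errors of $\bu$ through $l_1,l_3,l_4,\mathbf{s}$, which is precisely why only $\norm{\bu}_2$ appears in the target estimates.

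First I would bound $\norm{\epsilon_h}_0$ in terms of $\trinorm{\be_h}_{\cR}$ and data. Rearranging \eqref{eqn: ur_erreqn1} to isolate $\bb(\bv,\epsilon_h)$, applying the continuity \eqref{eqn: ur_ac_conti} to the $\nu\ba + \tilde{\bc}$ part and the supplementary estimates in Lemma~\ref{lemma: ur_supplement estimates} to $l_1,l_3,l_4,\mathbf{s}$, one obtains
\begin{equation*}
|\bb(\bv,\epsilon_h)| \leq C\bigl(\trinorm{\be_h}_{\cR} + (\sqrt{\nu}+1)h\norm{\bu}_2 + h^2\norm{\bu}_2\bigr)\trinorm{\bv}_{\cR}.
\end{equation*}
Invoking the inf-sup condition \eqref{eqn: ur_inf-sup} together with $C_1^{-1}\leq C(\sqrt{\nu}+h)$ (from \eqref{eqn: bound_cnb}) then yields a bound of the form
\begin{equation*}
\norm{\epsilon_h}_0 \leq C(\sqrt{\nu}+h)\bigl(\trinorm{\be_h}_{\cR} + (\sqrt{\nu}+1)h\norm{\bu}_2 + h^2\norm{\bu}_2\bigr).
\end{equation*}

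Next I would set $\bv=\be_h$ in \eqref{eqn: ur_erreqn1} and $q=\epsilon_h$ in \eqref{eqn: ur_erreqn2}, add them, and use \eqref{eqn: ur_erreqn2} to replace $\bb(\be_h,\epsilon_h)$ by $-\bb(\bm{\chi}_h,\epsilon_h)$. Coercivity \eqref{eqn: ur_ac_coer} gives $K_1\trinorm{\be_h}_{\cR}^2$ on the left, while on the right I use \eqref{eqn: supp_conti2} to control $|\bb(\bm{\chi}_h,\epsilon_h)|\leq Ch\norm{\bu}_2\norm{\epsilon_h}_0$, and Lemma~\ref{lemma: ur_supplement estimates} for the remaining terms. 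Substituting the bound on $\norm{\epsilon_h}_0$ from the previous step into $h\norm{\bu}_2\norm{\epsilon_h}_0$ produces a term $C(\sqrt{\nu}+h)h\norm{\bu}_2\trinorm{\be_h}_{\cR}$ plus purely data-dependent terms $Ch^2(\nu+1)\norm{\bu}_2^2$ (after dropping higher-order contributions). A Young's-inequality absorption of each $\trinorm{\be_h}_{\cR}$ factor then gives
\begin{equation*}
\trinorm{\be_h}_{\cR}^2 \leq C(\sqrt{\nu}+1)^2 h^2\norm{\bu}_2^2,
\end{equation*}
which is the desired velocity estimate after taking square roots.

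Finally, to get the pressure estimate I would plug the newly established velocity bound back into the intermediate estimate for $\norm{\epsilon_h}_0$: the factor $(\sqrt{\nu}+h)\trinorm{\be_h}_{\cR}$ becomes $C(\sqrt{\nu}+h)(\sqrt{\nu}+1)h\norm{\bu}_2$, and expanding $(\sqrt{\nu}+h)(\sqrt{\nu}+1)=\nu+\sqrt{\nu}+h+h\sqrt{\nu}$ and keeping only the leading-order contributions yields $Ch(\nu+\sqrt{\nu})\norm{\bu}_2 + Ch^2\norm{\bu}_2$, matching the statement. The main subtlety, and the only place a careful reader needs to slow down, is the bookkeeping in the Young's-inequality absorption: one must split the $C(\sqrt{\nu}+h)h\norm{\bu}_2\trinorm{\be_h}_{\cR}$ term properly so that the constants on $\norm{\bu}_2^2$ remain bounded independent of $\nu$ and $h$, and so that no spurious $\nu^{-1}$ or $h^{-1}$ factors (which would destroy pressure robustness) sneak in through the inf-sup constant $C_1^{-1}$.
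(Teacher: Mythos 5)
Your proposal is correct and follows essentially the same route as the paper's proof: isolate $\bb(\bv,\epsilon_h)$ in \eqref{eqn: ur_erreqn1} and use the inf-sup condition to bound $\norm{\epsilon_h}_0$ by $\trinorm{\be_h}_{\cR}$ plus data, then test with $\bv=\be_h$, $q=\epsilon_h$, substitute $\bb(\be_h,\epsilon_h)=-\bb(\bm{\chi}_h,\epsilon_h)$, and close with Young's inequality before feeding the velocity bound back into the pressure estimate. Your identification of the disappearance of the $\bb(\bv,\xi_h)$ term (via $(\nabla p,\cR\bv)_{\Th}=-\bb(\bv,\Pz p)$) as the source of pressure-robustness matches the paper's Lemma~\ref{lemma: ur_erreqn} exactly.
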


\begin{proof}
 We start with the error equation \eqref{eqn: ur_erreqn1},
 \begin{equation*}
     \bb(\bv,\epsilon_h)=\nu\ba(\be_h,\bv)+\tilde{\bc}(\be_h,\bv)-l_1(\bu,\bv)-l_3(\bu,\bv)-l_4(\bu,\bv)-\mathbf{s}(\Pi_h\bu,\bv).
 \end{equation*}
Then, it follows from \eqref{eqn: ur_ac_conti} and \eqref{sys: ur_suppest} that 
\begin{equation*}
    \bb(\bv,\epsilon_h)\leq C\left(\trinorm{\be_h} _\mathcal{R}+\sqrt{\nu}h\norm{\bu}_2+h\norm{\bu}_2+h^2\norm{\bu}_2\right)\trinorm{\bv}_\mathcal{R}.
\end{equation*}
 From the inf-sup condition \eqref{eqn: ur_inf-sup} with \eqref{eqn: bound_cnb}, we obtain
 \begin{equation}
     \norm{\epsilon_h}_0\leq C(\sqrt{\nu}+h)\left(\trinorm{\be_h}_\mathcal{R}+\sqrt{\nu}h\norm{\bu}_2+h\norm{\bu}_2+h^2\norm{\bu}_2\right).\label{eqn: inter_result1}
 \end{equation}
 We also choose $\bv=\be_h$ and $q=\epsilon_h$ in \eqref{sys: ur_erreqn} and substitute \eqref{eqn: ur_erreqn2} into \eqref{eqn: ur_erreqn1} to get
 \begin{equation*}
     \nu\ba(\be_h,\be_h)+\tilde{\bc}(\be_h,\be_h)=-\bb(\bm{\chi}_h,\epsilon_h)+l_1(\bu,\be_h)+l_3(\bu,\be_h)+l_4(\bu,\be_h)+\mathbf{s}(\Pi_h\bu,\be_h).
 \end{equation*}
Here, it follows from \eqref{eqn: supp_conti2} that
\begin{equation}
    |\bb(\bm{\chi}_h,\epsilon_h)|\leq Ch\norm{\bu}_2\norm{\epsilon_h}_0.\label{eqn: inter_result2}
\end{equation}
Therefore, from \eqref{eqn: ur_ac_coer}, \eqref{sys: ur_suppest}, and \eqref{eqn: inter_result2}, we have
\begin{equation*}
    \trinorm{\be_h}_\mathcal{R}^2\leq C\left( h\norm{\bu}_2\norm{\epsilon_h}_0+\sqrt{\nu}h\norm{\bu}_2\trinorm{\be_h}_\mathcal{R}+h\norm{\bu}_2\trinorm{\be_h}_\mathcal{R}\right),
\end{equation*}
while omitting $h^2$-terms.
We also replace $\norm{\epsilon_h}_0$ by its upper bound in \eqref{eqn: inter_result1} omitting high-order terms,
\begin{equation*}
    \trinorm{\be_h}^2_\mathcal{R}\leq C\left(\sqrt{\nu}h\norm{\bu}_2\trinorm{\be_h}_\mathcal{R}+h\norm{\bu}_2\trinorm{\be_h}_\mathcal{R}\right).
\end{equation*}
In this case, the Young's inequality gives
\begin{equation*}
    \sqrt{\nu}h\norm{\bu}_2\trinorm{\be_h}_\mathcal{R}\leq\frac{\nu h^2}{2\alpha}\norm{\bu}_2^2+\frac{\alpha}{2}\trinorm{\be_h}^2_\mathcal{R},\quad
    h\norm{\bu}_2\trinorm{\be_h}_\mathcal{R}\leq\frac{h^2}{2\alpha}\norm{\bu}_2^2+\frac{\alpha}{2}\trinorm{\be_h}^2_\mathcal{R}.
\end{equation*}
Therefore, it follows from choosing a proper $\alpha$ that
\begin{equation*}
    \trinorm{\be_h}^2_\mathcal{R}\leq Ch^2(\nu+1)\norm{\bu}_2^2,
\end{equation*}
which implies that
\begin{equation*}
    \trinorm{\be_h}_\mathcal{R}\leq Ch(\sqrt{\nu}+1)\norm{\bu}_2.
\end{equation*}
If we apply this estimate to \eqref{eqn: inter_result1}, then we obtain
\begin{equation*}
    \norm{\epsilon_h}_0\leq Ch(\nu+\sqrt{\nu})\norm{\bu}_2+Ch^2\norm{\bu}_2.
\end{equation*}
\end{proof}

\begin{remark}
    We emphasize that the error bounds in Theorem~\ref{thm: ur_error_est1} are pressure-robust and have no detrimental effect from small $\nu$.
With $\nu\rightarrow0$, the \texttt{PR-EG} method's velocity errors decrease in the optimal order, and pressure errors do in the second order (superconvergence is expected).
This result implies that the \texttt{PR-EG} method produces stable and accurate solutions to the Brinkman equations in the Darcy regime.
\end{remark}

In addition, we prove total error estimates showing the optimal orders of convergence in velocity and pressure.
\begin{theorem}
Under the same assumption of Theorem~\ref{thm: ur_error_est1}, we have the following error estimates
\begin{align*}
    &\trinorm{\bu-\bu_h}_\mathcal{R}\leq Ch(\sqrt{\nu}+1)\norm{\bu}_2,\\
    &\norm{p-p_h}_0\leq Ch\left((\nu+\sqrt{\nu})\norm{\bu}_2+\norm{p}_1\right).
\end{align*}
\end{theorem}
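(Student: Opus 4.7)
The plan is to apply the triangle inequality through the interpolants $\Pi_h\bu$ and $\mathcal{P}_0 p$, splitting the total errors into pure interpolation errors plus the Galerkin-style errors already controlled in Theorem~\ref{thm: ur_error_est1}. The pressure case is immediate: $\norm{p-p_h}_0 \leq \norm{p-\mathcal{P}_0 p}_0 + \norm{\mathcal{P}_0 p - p_h}_0$, and plugging in \eqref{eqn: P_err} together with the pressure estimate of Theorem~\ref{thm: ur_error_est1} yields $Ch\norm{p}_1 + Ch(\nu+\sqrt{\nu})\norm{\bu}_2 + Ch^2\norm{\bu}_2$. Absorbing the trailing $h^2$-term as a higher-order contribution produces exactly the stated bound.

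For the velocity I write $\trinorm{\bu-\bu_h}_{\mathcal{R}} \leq \trinorm{\bu-\Pi_h\bu}_{\mathcal{R}} + \trinorm{\Pi_h\bu - \bu_h}_{\mathcal{R}}$, where the second term is directly bounded by $Ch(\sqrt{\nu}+1)\norm{\bu}_2$ via Theorem~\ref{thm: ur_error_est1}. For the first term I expand the definition of $\trinorm{\cdot}_{\mathcal{R}}$ and estimate the three pieces separately. The viscous piece $\sqrt{\nu}\enorm{\bu-\Pi_h\bu}$ is controlled by \eqref{eqn: Pih_energy_err} giving $C\sqrt{\nu}h\norm{\bu}_2$. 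The jump penalty $\sqrt{\rho_2}\norm{h_e^{1/2}\jump{\bu-\Pi_h\bu}}_{0,\Eh}$ is handled by using continuity of $\bu$ so that $\jump{\bu-\Pi_h\bu} = -\jump{\Pi_h\bu}$, and then the trace inequality \eqref{eqn: trace} together with \eqref{eqn: Pih_err} and \eqref{eqn: Pih_energy_err} yields an $O(h^2\norm{\bu}_2)$ bound. The third piece $\norm{\cR(\bu-\Pi_h\bu)}_0$ I estimate by the triangle inequality $\norm{\cR(\bu-\Pi_h\bu)}_0 \leq \norm{\bu-\Pi_h\bu}_0 + \norm{(\bu-\Pi_h\bu) - \cR(\bu-\Pi_h\bu)}_0$, where the first term is $O(h^2\norm{\bu}_2)$ from \eqref{eqn: Pih_err} and the second is split further as $(\bu - \cR\bu) - (\Pi_h\bu - \cR\Pi_h\bu)$ so that Lemma~\ref{lemma: cR-err} applies to the $\Vh$-piece $\Pi_h\bu$. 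All three contributions are of order $h(\sqrt{\nu}+1)\norm{\bu}_2$ or smaller, so the total velocity bound follows.

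The main obstacle is making sense of $\cR$ acting on $\bu - \Pi_h\bu$, since Lemma~\ref{lemma: cR-err} was stated for $\Vh$ but the smooth piece $\bu$ is not in $\Vh$. I resolve this by interpreting $\cR$ through its defining edge moments in \eqref{sys: BDM}, which extend naturally to any $[H^1(\Omega)]^d$ function; then $\cR\bu$ coincides with the standard $\mathcal{B}DM_1$ interpolant of the smooth field $\bu$ and inherits the usual $O(h^2\norm{\bu}_2)$ approximation estimate. Under this extension, the continuous/discrete split above keeps Lemma~\ref{lemma: cR-err} in force on $\Pi_h\bu \in \Vh$ while handing the smooth remainder to standard BDM interpolation theory, closing the estimate. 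Combining the two triangle-inequality pieces then completes the proof of the pressure-robust total error bounds.
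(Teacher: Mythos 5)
Your proposal is correct and follows essentially the same route as the paper: triangle inequality through $\Pi_h\bu$ and $\mathcal{P}_0 p$, the bounds of Theorem~\ref{thm: ur_error_est1} for the discrete parts, and a term-by-term estimate of $\trinorm{\bu-\Pi_h\bu}_{\mathcal{R}}$ using \eqref{eqn: Pih_energy_err}, \eqref{eqn: trace}, \eqref{eqn: Pih_err}, and \eqref{eqn: cR-err}. Your extra care in extending $\cR$ to $\bu-\Pi_h\bu$ via the edge moments in \eqref{sys: BDM} addresses a point the paper glosses over (it applies Lemma~\ref{lemma: cR-err} to $\bm{\chi}_h\notin\Vh$ without comment), and your dropping of the residual $h^2\norm{\bu}_2$ term in the pressure bound matches the paper's own convention of omitting higher-order terms.
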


\begin{proof}
For the velocity error estimate, we show
\begin{equation*}
    \trinorm{\bu-\Pi_h\bu}_\mathcal{R}\leq C\sqrt{\nu}h\norm{\bu}_2.
\end{equation*}
More precisely, we recall $\bm{\chi}_h=\bu-\Pi_h\bu$ and observe the energy norm,
\begin{equation*}
\trinorm{\bm{\chi}_h}^2_\mathcal{R}=\nu\enorm{\bm{\chi}_h}^2+\norm{\cR\bm{\chi}_h}_0^2+\rho_2\norm{h_e^{1/2}\jump{\bm{\chi}_h}}_{0,\Eh}^2.
\end{equation*}
Then, it follows from \eqref{eqn: cR-err}, \eqref{eqn: Pih_energy_err}, and \eqref{eqn: Pih_err} that
\begin{equation*}
    \norm{\cR\bm{\chi}_h}_0\leq \norm{\cR\bm{\chi}_h-\bm{\chi}_h}_0+\norm{\bm{\chi}_h}_0\leq Ch\enorm{\bm{\chi}_h}+\norm{\bm{\chi}_h}_0\leq Ch^2\norm{\bu}_2.
\end{equation*}
Also, from \eqref{eqn: trace} and \eqref{eqn: Pih_err}, we obtain
\begin{equation*}
    \norm{h_e^{1/2}\jump{\bm{\chi}_h}}_{0,\Eh}\leq C\left(\norm{\bm{\chi}_h}_{0}^2+h^2\norm{\nabla\bm{\chi}_h}_{0,\Th}^2\right)^{1/2}\leq Ch^2\norm{\bu}_2.
\end{equation*}
Hence, since $\enorm{\bm{\chi}_h}\leq Ch\norm{\bu}_2$, the error bound is
\begin{equation*}
    \trinorm{\bm{\chi}_h}_\mathcal{R}\leq C\left(\sqrt{\nu}h+h^2\right)\norm{\bu}_2.
\end{equation*}
Furthermore, the pressure error estimate is readily proved by the triangle inequality and interpolation error estimate \eqref{eqn: P_err}.
\end{proof}

In conclusion, the proposed \texttt{PR-EG} method solves the Brinkman equations in both the Stokes and Darcy regimes, having the optimal order of convergence for both velocity and pressure.

%%%%%%%%%%%%%%%%%%%%%%%%%%%%%%%%%%
% 6. Numerical Experiments
%%%%%%%%%%%%%%%%%%%%%%%%%%%%%%%%%%

\section{Numerical Experiments}
\label{sec:num_exp}

This section shows numerical experiments validating our theoretical results with two- and three-dimensional examples.
The numerical methods in this paper and their discrete solutions are denoted as follows:
\begin{itemize}
    \item $(\bu_h^{\texttt{ST}},p_h^{\texttt{ST}})$: Solution by the \texttt{ST-EG} method in Algorithm~\ref{alg:EG}.
    \item $(\bu_h^{\texttt{PR}},p_h^{\texttt{PR}})$: Solution by the \texttt{PR-EG} method in Algorithm~\ref{alg:UREG}.
\end{itemize}
While considering the scaled Brinkman equations \eqref{sys: scaled_Brinkman} with the parameter $\nu$, we recall the error estimates for the \texttt{ST-EG} method in Theorem~\ref{thm: eg_error_est1},
\begin{subequations}\label{sys: sum_err_esti_st}
\begin{alignat}{2}
& \trinorm{\Pi_h\bu-\bu^{\texttt{ST}}_h}\lesssim(\sqrt{\nu}+1)h\norm{\bu}_2 + \left( h+\frac{h}{\sqrt{\nu+c_1 h^2}}\right)\norm{p}_1, \label{eqn: sum_err_esti_st_u}\\
& \norm{\Pz p-p_h^{\texttt{ST}}}_0\lesssim (\nu+\sqrt{\nu})h\norm{\bu}_2 + (\sqrt{\nu}+1)h\norm{p}_1,\label{eqn: sum_err_esti_st_p}
\end{alignat}
\end{subequations}
and the error estimates for the \texttt{PR-EG} method from Theorem~\ref{thm: ur_error_est1}
\begin{subequations}\label{sys: sum_err_esti_ur}
\begin{alignat}{2}
& \trinorm{\Pi_h\bu-\bu_h^{\texttt{PR}}}\lesssim (\sqrt{\nu}+1)h\norm{\bu}_2, \label{eqn: sum_err_esti_ur_u}\\
& \norm{\Pz p-p_h^{\texttt{PR}}}_0\lesssim (\nu+\sqrt{\nu})h\norm{\bu}_2+h^2\norm{\bu}_2.\label{eqn: sum_err_esti_ur_p}
\end{alignat}
\end{subequations}
We mainly check the error estimates \eqref{sys: sum_err_esti_st} and \eqref{sys: sum_err_esti_ur} by showing various numerical experiments with $\nu$ and $h$.
We also display the difference between the numerical solutions for \texttt{ST-EG} and \texttt{PR-EG} in the Darcy regime, which shows that the \texttt{PR-EG} method is needed to obtain stable and accurate velocity solutions.
Moreover, we present permeability tests considering the Brinkman equations \eqref{sys:governing} with viscosity $\mu$ and permeability $K$ and applying both EG methods.
The permeability tests enhance the motivation of using the \texttt{PR-EG} method for the case of extreme $\mu$ or $K$.

We implement the numerical experiments using the authors' MATLAB codes developed based on iFEM \cite{CHE09}.
The penalty parameters are $\rho_1=\rho_2=3$ for all the numerical experiments.

\subsection{Two dimensional tests}\label{subsec: 2d_tests}

Let the computational domain be $\Omega=(0,1)\times (0,1)$. The velocity field and pressure are chosen as
\begin{equation*}
    \bu
    = \left(\begin{array}{c}
    10x^2(x-1)^2y(y-1)(2y-1) \\
    -10x(x-1)(2x-1)y^2(y-1)^2
    \end{array}\right),
    \quad
    p = 10(2x-1)(2y-1).
\end{equation*}
Then, the body force $\bbf$ and the Dirichlet boundary condition are obtained from \eqref{sys: scaled_Brinkman} using the exact solutions.

\subsubsection{Robustness and accuracy test}

We compare the \texttt{ST-EG} and \texttt{PR-EG} methods to see robustness and check their accuracy based on the error estimates \eqref{sys: sum_err_esti_st} and \eqref{sys: sum_err_esti_ur}.
First, we interpret the \texttt{ST-EG} method's velocity error estimate \eqref{eqn: sum_err_esti_st_u} depending on the relation between coefficient $\nu$ and mesh size $h$.
The first-order convergence of the energy norm with $h$ is guaranteed when $\nu\gg h^2$, but it is hard to tell any order of convergence when $\nu$ is smaller than $h^2$ due to the term $h/\sqrt{\nu+c_1h^2}$.
On the other hand, the velocity error estimate for the \texttt{PR-EG} method \eqref{eqn: sum_err_esti_ur_u} means the first-order convergence in $h$ regardless of $\nu$.
\begin{figure}[!htb]
\centering
\begin{subfigure}{0.4\linewidth}
    \centering
\includegraphics[width=1\linewidth]{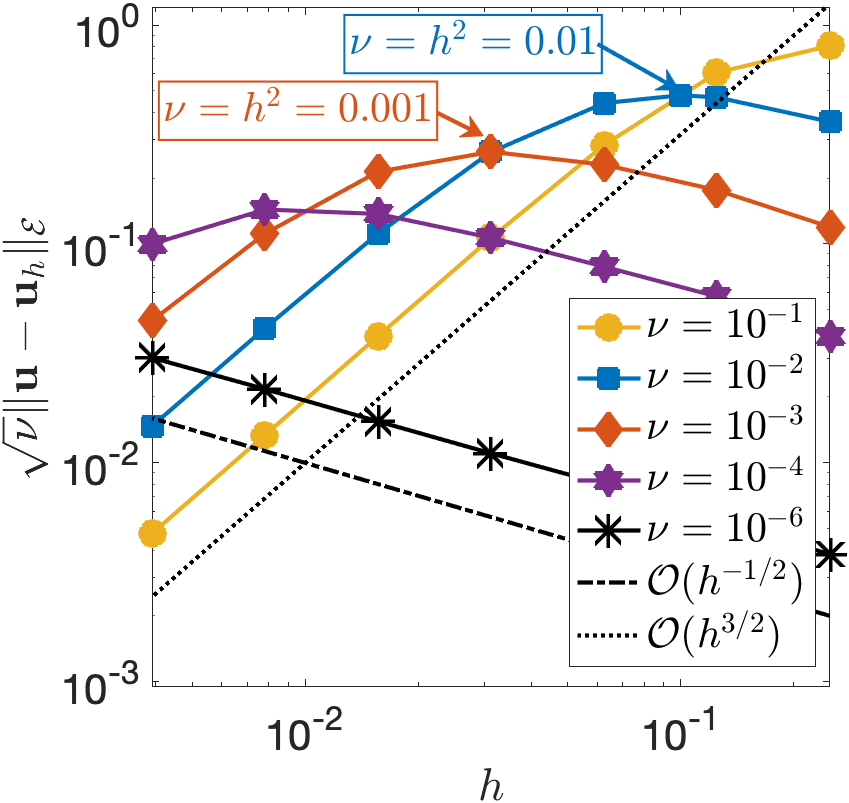}
    \caption{\texttt{ST-EG}: Velocity errors with $\nu$ and $h$}
    \end{subfigure}
    \hskip 20pt
    \begin{subfigure}{0.4\linewidth}
    \centering
\includegraphics[width=1\linewidth]{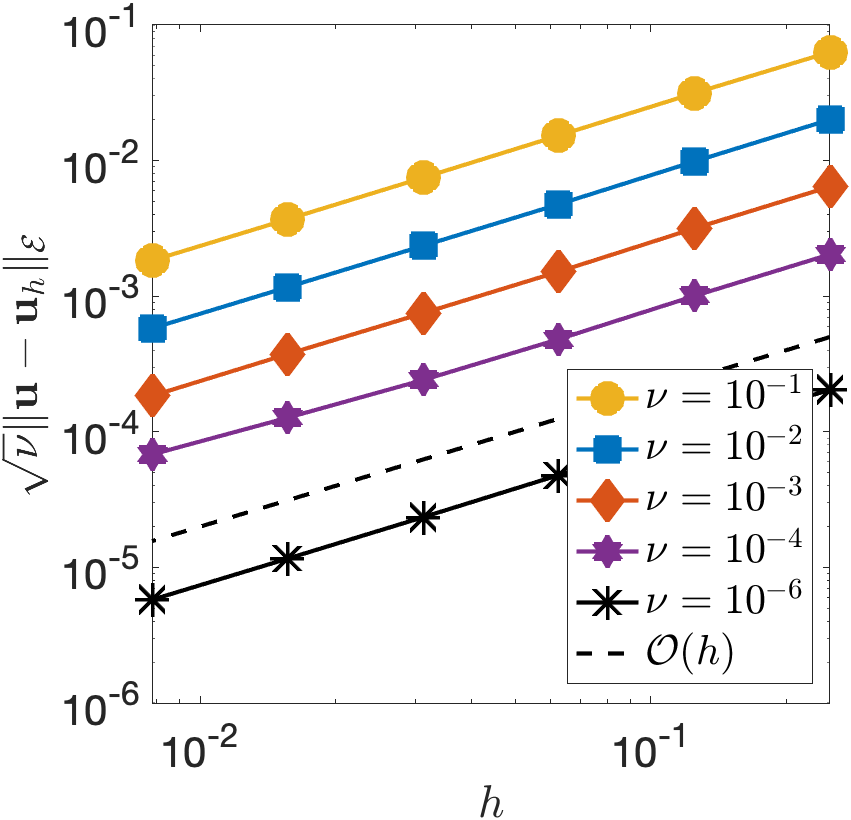}
        \caption{\texttt{PR-EG}: Velocity errors with $\nu$ and $h$}
    \end{subfigure}
    \caption{Convergence history with $h$ when different $\nu$ is given.}
    \label{figure: robust_test}
\end{figure}
In Figure~\ref{figure: robust_test}, we check the discrete $H^1$-error for the velocity scaled by $\nu$, $\sqrt{\nu}\enorm{\bu-\bu_h}$. It is a component of the energy norm $\trinorm{\bu-\bu_h}$.
The \texttt{ST-EG} method tends to produce errors increasing with $\mathcal{O}(h^{-1/2})$ when $h>\sqrt{\nu}$, while the errors decrease with $\mathcal{O}(h^{3/2})$ when $h<\sqrt{\nu}$.
This result supports the error estimates \eqref{eqn: sum_err_esti_st_u} (superconvergence may happen because we solve the problem on structured meshes) and means that a tiny mesh size is needed for accurate solutions with small $\nu$.
However, the \texttt{PR-EG} method's errors uniformly show the first-order convergence, $\mathcal{O}(h)$, regardless of $\nu$.
This result supports the error estimates \eqref{eqn: sum_err_esti_ur_u}, so the \texttt{PR-EG} method guarantees stable and accurate solutions in both the Stokes and Darcy regimes.

\begin{table}[ht]
    \centering
    \begin{tabular}{|c||c|c|c|c|c|c|c|c|}
    \hline
        &  \multicolumn{6}{c|}{\texttt{ST-EG}
        } \\
    \cline{2-7}   
      $h$  & {$\trinorm{\bu-\bu_h^{\texttt{ST}}}$} & {\small Order} & { $\sqrt{\nu}\|\mathbf{u}-\mathbf{u}_h^{\texttt{ST}}\|_\mathcal{E}$} & {\small Order} & {$\|\bu-\bu_h^{\texttt{ST}}\|_0$} & {\small Order} \\ 
      \hline
      $1/4$ & 9.695e-1 & -  & 4.437e-3 & - & 1.763e-1 & - \\
      \hline
      $1/8$  & 7.130e-1 & 0.44   & 6.645e-3 & -0.58 & 1.619e-1 & 0.12  \\
      \hline
      $1/16$  & 4.939e-1 & 0.53   & 9.015e-3 & -0.44 & 9.999e-2 & 0.70 \\
      \hline
      $1/32$  & 3.430e-1 & 0.53 & 1.234e-2 & -0.45 & 6.154e-2 & 0.70 \\
      \hline
      $1/64$  & 2.402e-1 & 0.51  &  1.715e-2 & -0.48 & 4.065e-2 & 0.60 \\
      \hline
       &  \multicolumn{6}{c|}{\texttt{PR-EG}
        } \\
    \cline{2-7}   
      $h$  & {$\trinorm{\bu-\bu_h^{\texttt{PR}}}$} & {\small Order} & { $\sqrt{\nu}\|\mathbf{u}-\mathbf{u}_h^{\texttt{PR}}\|_\mathcal{E}$} & {\small Order} & {$\|\bu-\bu_h^{\texttt{PR}}\|_0$} & {\small Order} \\ 
      \hline
      $1/4$ & 2.479e-2 & -  & 2.045e-4 & - & 1.844e-2 & - \\
      \hline
      $1/8$  & 4.774e-3 & 2.38   & 1.003e-4 & 1.03 & 2.727e-3 & 2.76  \\
      \hline
      $1/16$  & 8.126e-4 & 2.55   & 4.797e-5 & 1.06 & 5.257e-4 & 2.38 \\
      \hline
      $1/32$  & 1.565e-4 & 2.38 & 2.346e-5 & 1.03 & 1.180e-4 & 2.16 \\
      \hline
      $1/64$  & 3.464e-5 & 2.18  &  1.160e-5 & 1.02 & 2.792e-5 & 2.08 \\
      \hline
    \end{tabular}
    \caption{A mesh refinement study for the velocity errors of the \texttt{ST-EG} and \texttt{PR-EG} with $h$ when $\nu=10^{-6}$.}
    \label{table: refine_study_2d_u}
\end{table}
We fix $\nu=10^{-6}$ and compare the velocity errors and solutions of the \texttt{ST-EG} and \texttt{PR-EG} methods.
Table~\ref{table: refine_study_2d_u} displays the energy errors and their major components, the discrete $H^1$-errors scaled by $\nu$ and $L^2$-errors.
For the \texttt{ST-EG} method, the energy errors decrease in the half-order convergence because the $L^2$-errors are dominant and decrease in the same order.
However, the $H^1$-errors keep increasing unless $h<\sqrt{\nu}=10^{-3}$, so the $H^1$-errors will become dominant and deteriorate the order of convergence of the energy errors.
On the other hand, using the \texttt{PR-EG} method, we expect from \eqref{eqn: sum_err_esti_ur_u} that the energy errors and major components converge in at least the first order of $h$.
Indeed, Table~\ref{table: refine_study_2d_u} shows that the $H^1$-errors decrease in the first order with $h$, while the $L^2$-errors reduce in the second order.
Since the energy error involve both $H^1$- and $L^2$-errors, the energy errors decrease in the second order because of the dominant $L^2$-errors but eventually converge in the first order coming from the $H^1$-errors.
\begin{figure}[!htb]
\begin{subfigure}{\linewidth}
\centering
    \includegraphics[width=0.3\linewidth]{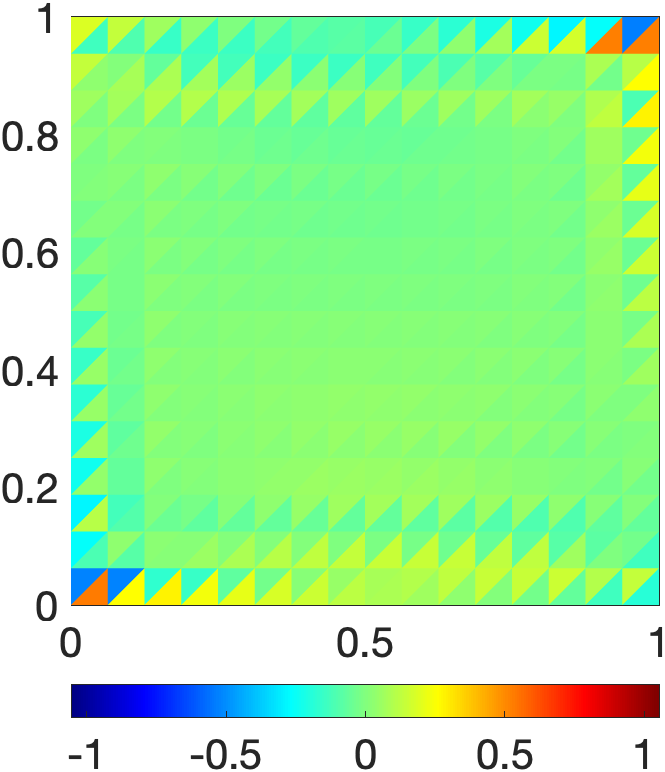}
    \hskip 15pt
    \includegraphics[width=0.3\linewidth]{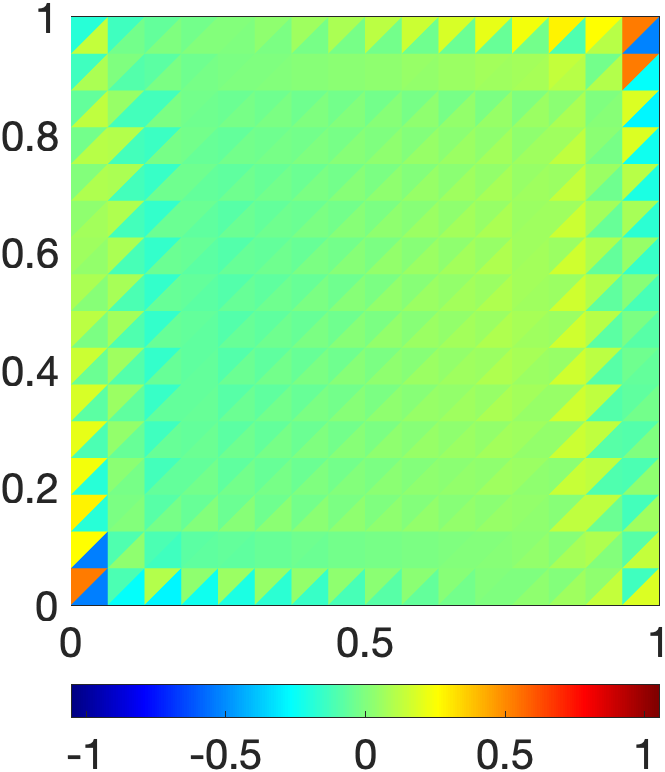}
    \hskip 15pt
    \includegraphics[width=0.3\linewidth]{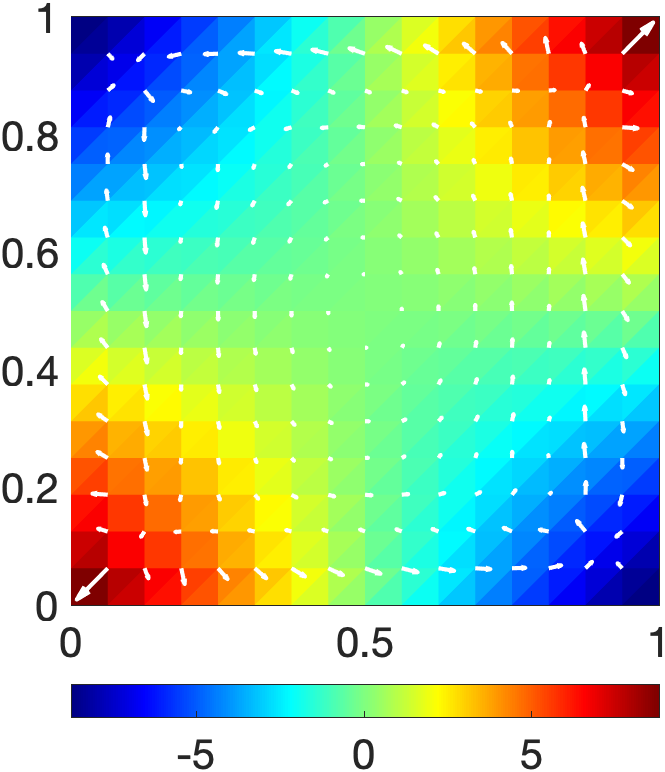}
    \caption{\texttt{ST-EG}: $u_1$, $u_2$, and $p$ with the velocity vector fields, from left to right}
\end{subfigure}
\vskip 20pt
\begin{subfigure}{\linewidth}
    \centering
    \includegraphics[width=0.3\linewidth]{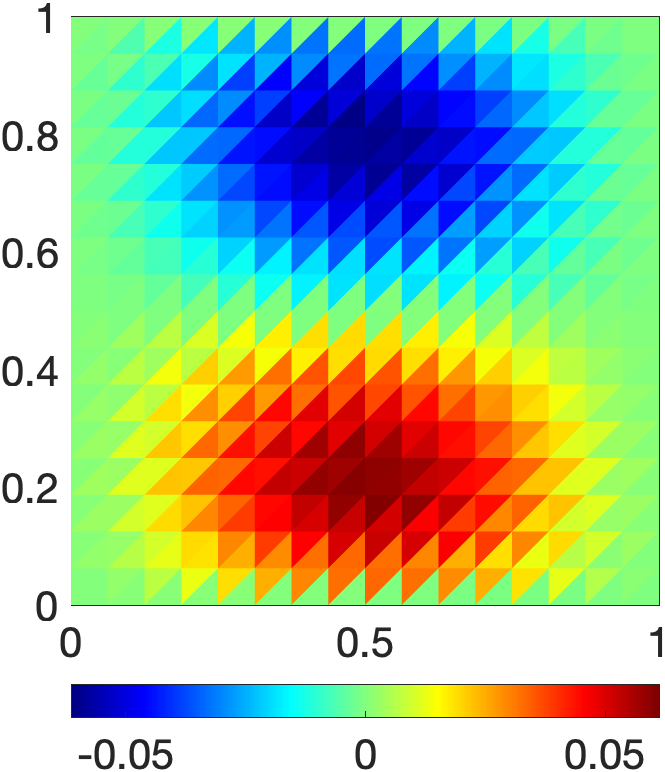}
    \hskip 15pt
    \includegraphics[width=0.3\linewidth]{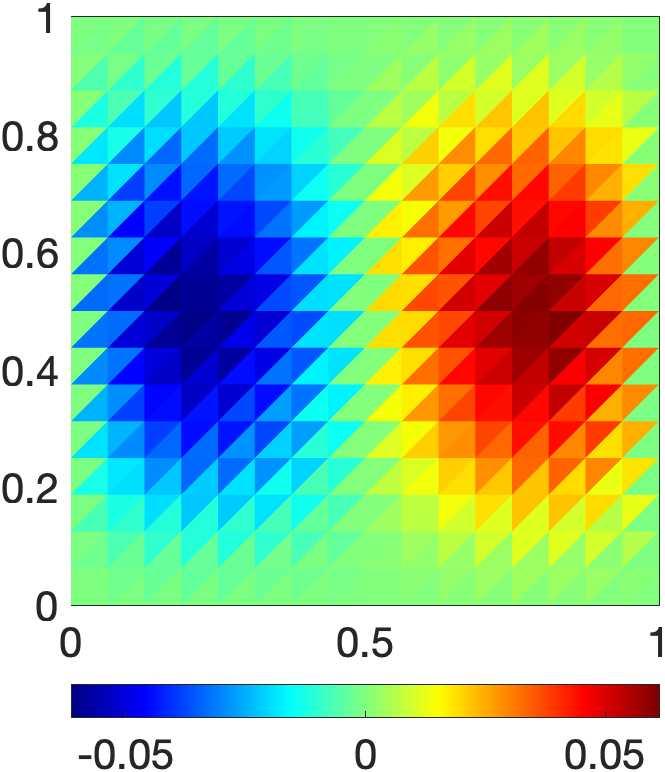}
    \hskip 15pt
    \includegraphics[width=0.3\linewidth]{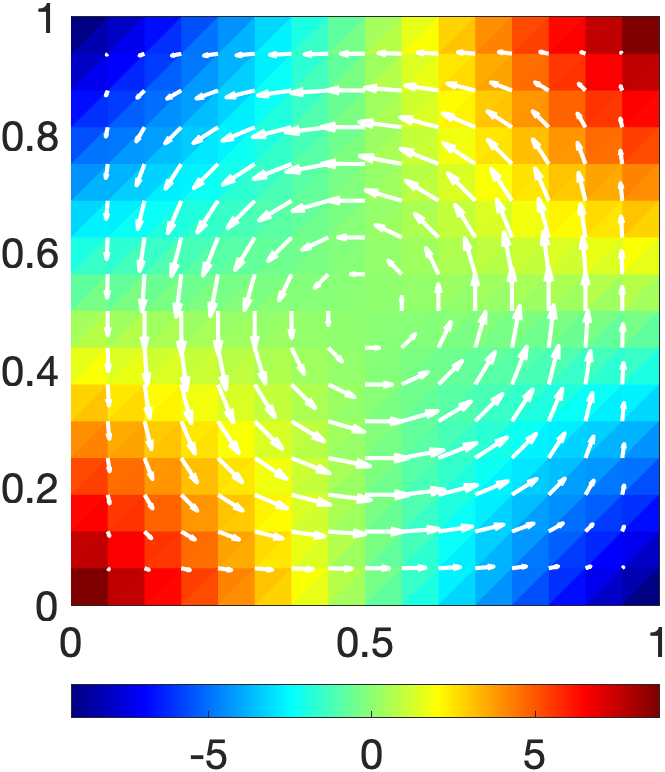}
    \caption{\texttt{PR-EG}: $u_1$, $u_2$, and $p$ with the velocity vector fields, from left to right}
\end{subfigure}
    \caption{Numerical solutions of \texttt{ST-EG} and \texttt{PR-EG} when $\nu=10^{-6}$ and $h=1/16$.}
    \label{figure: nume_sol_2d}
\end{figure}
In Figure~\ref{figure: nume_sol_2d}, the \texttt{PR-EG} method produces accurate velocity solutions clearly showing a vortex flow pattern when $\nu=10^{-6}$ and $h=1/16$. In contrast, the numerical velocity from the \texttt{ST-EG} method includes significant oscillations around the boundary of the domain.

\begin{table}[!htb]
    \centering
    \begin{tabular}{|c||c|c|c|c|c|c|c|c|}
    \hline
        &  \multicolumn{4}{c|}{\texttt{ST-EG}
        } &  \multicolumn{4}{c|}{\texttt{PR-EG}
        } \\
    \cline{2-9}   
      $h$  & {\small $\|\Pz p-p_h^{\texttt{ST}}\|_0$} & {\small Order} & {$\|p-p_h^{\texttt{ST}}\|_0$} & {\small Order} & {\small $\|\Pz p-p_h^{\texttt{PR}}\|_0$} & {\small Order} & {$\|p-p_h^{\texttt{PR}}\|_0$} & {\small Order} \\ 
      \hline
      $1/4$ & 5.783e-1 & -  & 1.116e+0 & - & 1.110e-2 & - & 9.548e-1 & -\\
      \hline
      $1/8$  & 1.682e-1 & 1.78   & 5.088e-1 & 1.13 & 7.762e-4 & 3.84 & 4.802e-1 & 0.99\\
      \hline
      $1/16$  & 5.455e-2 & 1.62   & 2.466e-1 & 1.04 & 3.756e-5 & 4.37 & 2.404e-1 & 1.00\\
      \hline
      $1/32$  & 1.917e-2 & 1.51 & 1.218e-1 & 1.02 & 2.408e-6 & 3.96 & 1.203e-1 & 1.00\\
      \hline
      $1/64$  & 7.271e-3 & 1.40  &  6.058e-2 & 1.01 & 2.089e-7 & 3.53 & 6.014e-2 & 1.00\\
      \hline
    \end{tabular}
    \caption{A mesh refinement study for the pressure errors of the \texttt{ST-EG} and \texttt{PR-EG} with $h$ when $\nu=10^{-6}$.}
    \label{table: refine_study_2d_p}
\end{table}
Moreover, the pressure error estimates \eqref{eqn: sum_err_esti_st_p} and \eqref{eqn: sum_err_esti_ur_p} tell us that the convergence order for the pressure errors is at least $\mathcal{O}(h)$ in both methods. However, the \texttt{PR-EG} method can produce superconvergent pressure errors because the term $h^2\norm{p}_1$ is dominant when $\nu$ is small.
In Table~\ref{table: refine_study_2d_p}, the pressure errors of the \texttt{PR-EG} method, $\norm{\Pz p-p_h^{\texttt{PR}}}_0$, decrease in at least $\mathcal{O}(h^3)$, which means superconvergence compared to the interpolation error estimate \eqref{eqn: P_err}.
On the other hand, the \texttt{ST-EG} method still yields pressure errors converging in the first order with $h$.
Since the interpolation error is dominant in the total pressure errors $\norm{p-p_h}_0$, the errors in Table~\ref{table: refine_study_2d_p} have the first-order convergence with $h$ in both methods.
Therefore, the numerical results support the pressure error estimates \eqref{eqn: sum_err_esti_st_p} and \eqref{eqn: sum_err_esti_ur_p}.

\subsubsection{Error profiles with respect to $\nu$}

We shall confirm the error estimates \eqref{sys: sum_err_esti_st} and \eqref{sys: sum_err_esti_ur} in terms of the parameter $\nu$ by checking error profiles depending on $\nu$.
We define the following error profile functions of $\nu$ based on the error estimates and show that these functions explain the behavior of the velocity and pressure errors with $\nu$:
\begin{itemize}
    \item $\displaystyle E_{\bu,2}^\texttt{ST}(\nu):=0.1h\sqrt{\nu}+\frac{0.3h}{\sqrt{\nu+3h^2}}+0.4h=\frac{0.1}{32}\sqrt{\nu}+\frac{0.3}{\sqrt{32^2\nu+3}}+\frac{0.4}{32}$ from \eqref{eqn: sum_err_esti_st_u},
    \item $\displaystyle E_{\bu,2}^\texttt{PR}(\nu):=0.8h\sqrt{\nu}+0.05h=\frac{0.8}{32}\sqrt{\nu}+\frac{0.05}{32}$ from \eqref{eqn: sum_err_esti_ur_u},
    \item $\displaystyle E_{p,2}^\texttt{ST}(\nu):=2h\nu+3h\sqrt{\nu}+0.3h=\frac{2}{32}\nu+\frac{3}{32}\sqrt{\nu}+\frac{0.3}{32}$ from \eqref{eqn: sum_err_esti_st_p},
    \item $\displaystyle E_{p,2}^\texttt{PR}(\nu):=0.5h\nu+0.01h\sqrt{\nu}+0.01h^2=\frac{0.5}{32}\nu+\frac{0.01}{32}\sqrt{\nu}+\frac{0.01}{32^2}$ from \eqref{eqn: sum_err_esti_ur_p},
\end{itemize}
where $h=1/32$.
\begin{figure}[!htb]
\centering
\begin{subfigure}{0.4\linewidth}
    \centering
    \includegraphics[width=1\textwidth]{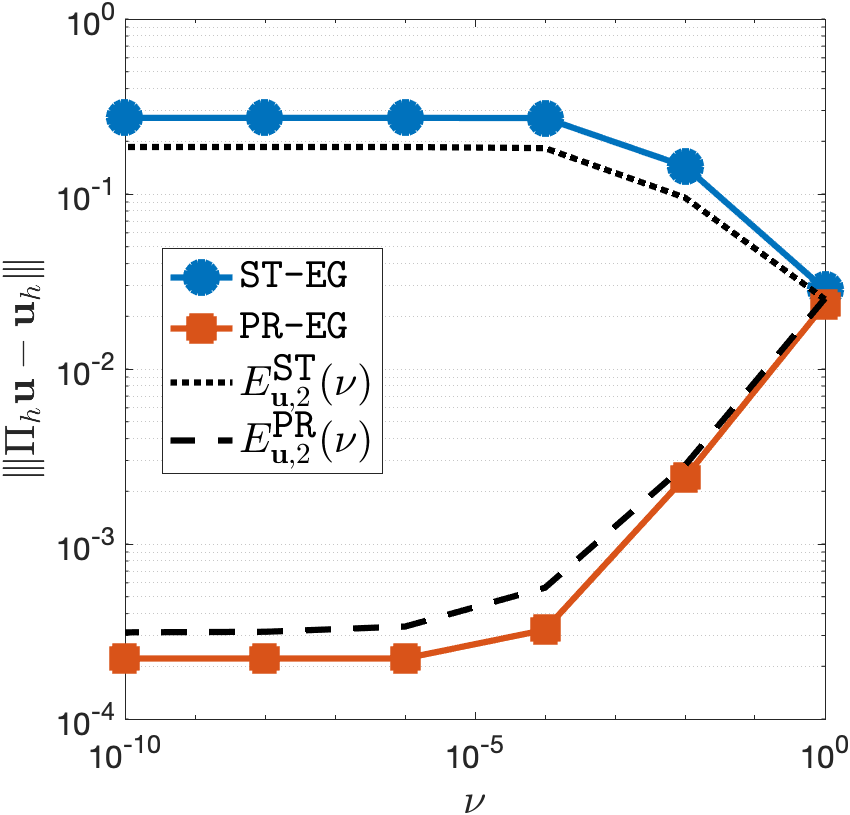}
    \caption{Velocity errors vs. $\nu$}
    \end{subfigure}
    \hskip 20pt
    \begin{subfigure}{0.4\linewidth}
    \centering
    \includegraphics[width=1\textwidth]{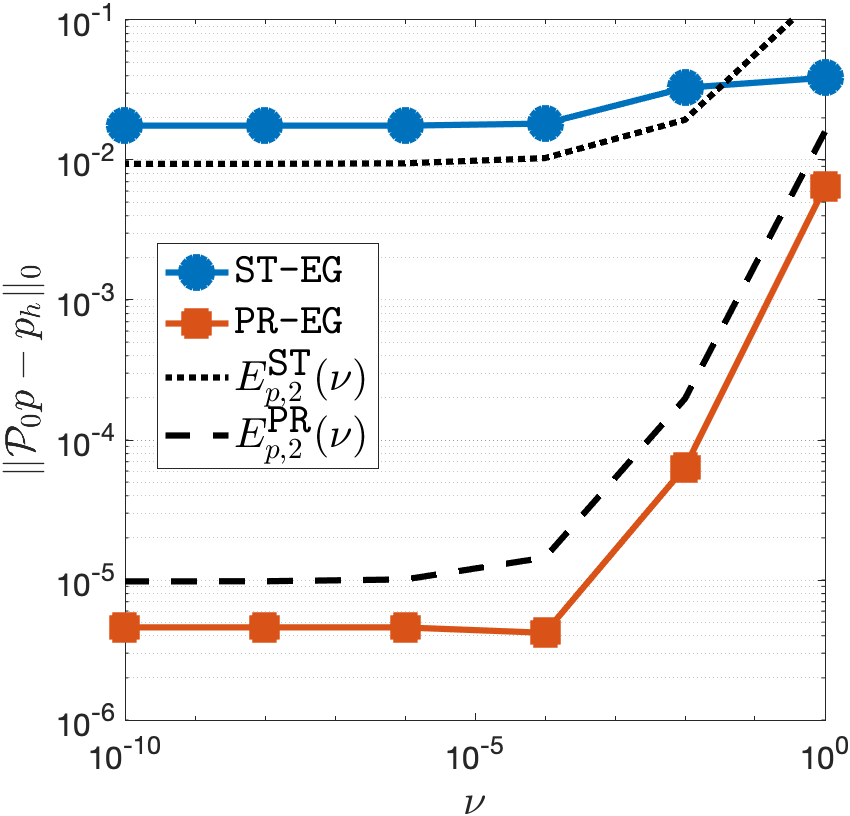}
        \caption{Pressure errors vs. $\nu$}
    \end{subfigure}
    \caption{Error profiles of the \texttt{ST-EG} and \texttt{PR-EG} methods with varying $\nu$ and a fixed mesh size $h=1/32$.}
    \label{figure: errors_prof_2d}
\end{figure}
Figure~\ref{figure: errors_prof_2d} shows the velocity and pressure errors and the graphs of the above error profile functions when $\nu$ decreases from $1$ to $0$ and $h=1/32$.
As shown in Figure~\ref{figure: errors_prof_2d}, the velocity errors for the \texttt{ST-EG} method increase when $\nu$ is between 1 to $10^{-4}$ and tend to remain constant when $\nu$ is smaller.
The \texttt{ST-EG} method's pressure errors decrease slightly and stay the same as $\nu\rightarrow0$.
On the other hand, the velocity and pressure errors for the \texttt{PR-EG} method significantly reduce and remain the same after $\nu=10^{-4}$.
This error behavior can be explained by the graphs of the error profile functions guided by the error estimates \eqref{sys: sum_err_esti_st} and \eqref{sys: sum_err_esti_ur}, so this result supports the estimates concerning $\nu$.
In addition, the velocity and pressure errors for the \texttt{PR-EG} method are almost 1000 times smaller than the \texttt{ST-EG} method in Figure~\ref{figure: errors_prof_2d}.
Therefore, we confirm that the \texttt{PR-EG} method guarantees more accurate solutions for velocity and pressure when $\nu$ is small.

\subsubsection{Permeability test}
In this test, we consider the Brinkman equations \eqref{sys:governing} with viscosity $\mu=10^{-6}$ and permeability given as the permeability map in Figure~\ref{figure: permeability_t1}.
\begin{figure}[!htb]
\centering
\includegraphics[width=0.3\linewidth]{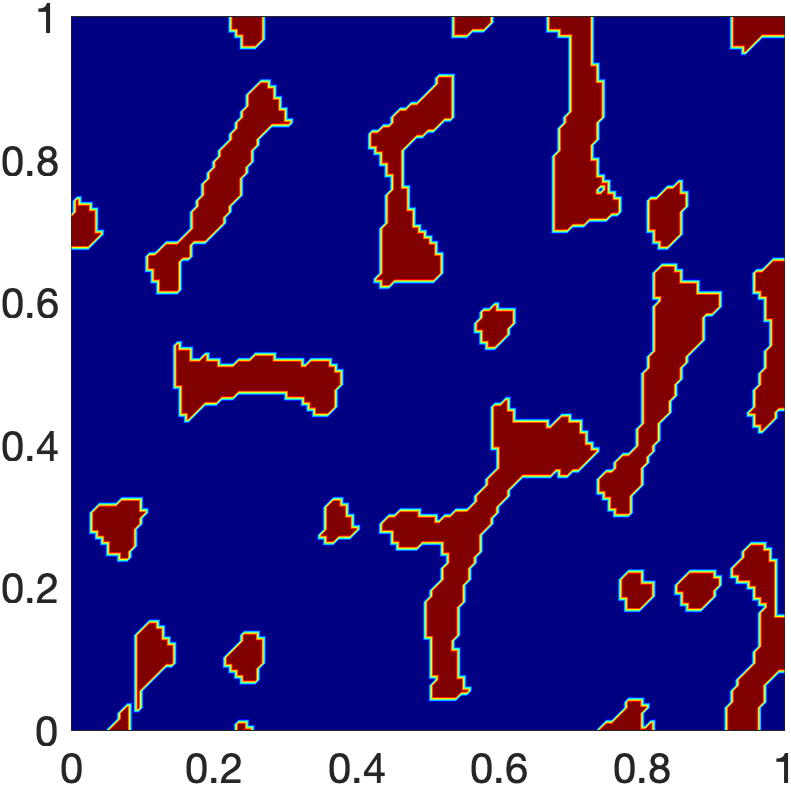}
    \caption{Permeability map; red regions mean $K^{-1}=10^6$ and blue regions mean $K^{-1}=1$.}
    \label{figure: permeability_t1}
\end{figure}
The permeability map indicates that fluid tends to flow following the blue regions, so the magnitude of numerical velocity will be more significant in the blue areas than in the red parts.
We set the velocity on the boundary of the domain as $\bu=\langle 1,0\rangle$ and body force as $\bbf = \langle 1, 1\rangle$.
\begin{figure}[!htb]
\centering
\begin{subfigure}{0.45\linewidth}
    \centering
    \includegraphics[width=1\linewidth]{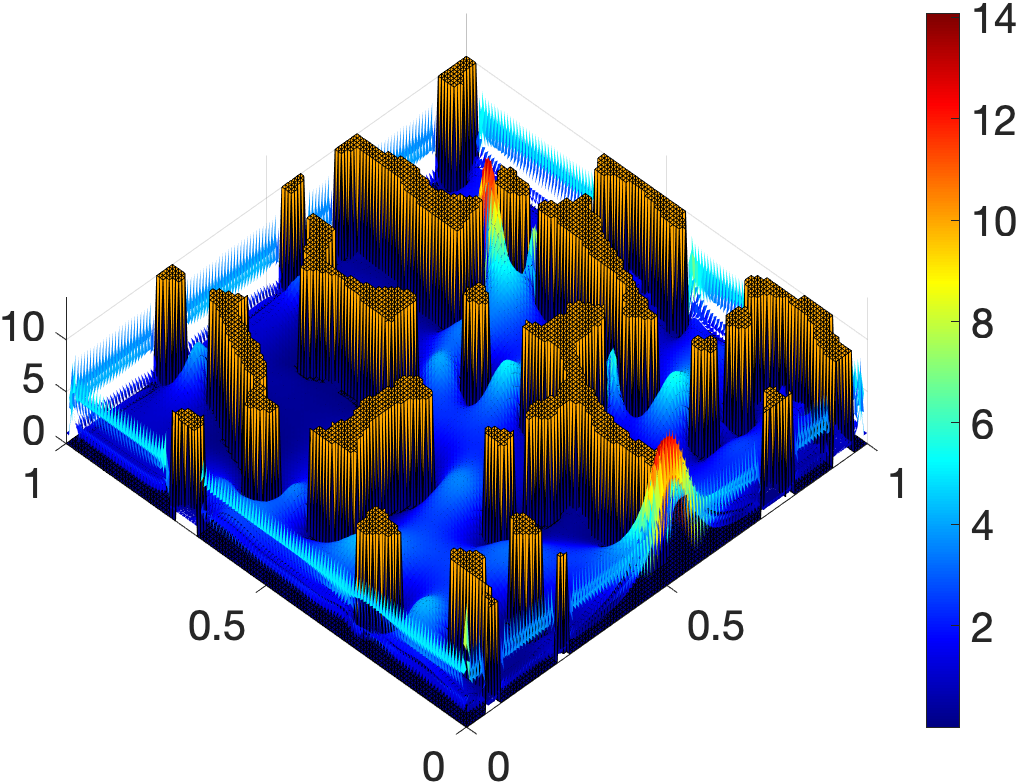}
    \caption{\texttt{ST-EG}: Magnitude of numerical velocity}
    \end{subfigure}
    \begin{subfigure}{0.45\linewidth}
    \centering
    \includegraphics[width=1\linewidth]{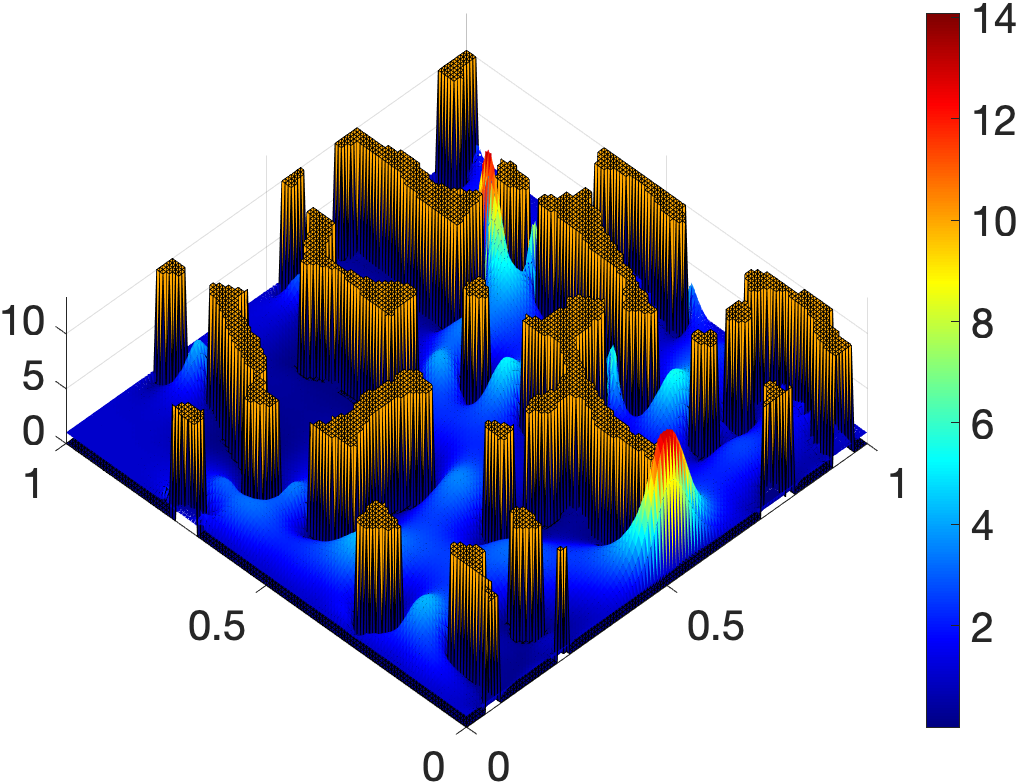}
        \caption{\texttt{PR-EG}: Magnitude of numerical velocity}
    \end{subfigure}
    \caption{Numerical velocity solutions of \texttt{ST-EG} and \texttt{PR-EG} on the permeability map.}
    \label{figure: mag_vel_2d}
\end{figure}
We mainly compare the magnitude of the numerical velocity obtained from the two methods in Figure~\ref{figure: mag_vel_2d}.
We clearly see that the \texttt{PR-EG} method's velocity is more stable than the \texttt{ST-EG} method's velocity containing nonnegligible noises (or oscillations) around the boundary.
This result tells that the \texttt{PR-EG} method is necessary for stable and accurate velocity solutions to the Brinkman equations with extreme viscosity and permeability.

\subsection{Three dimensional tests}

    We consider a three-dimensional flow in a unit cube $\Omega=(0,1)^3$. The velocity field and pressure are chosen as
\begin{equation*}
    \bu 
    = \left(\begin{array}{c}
    \sin(\pi x)\cos(\pi y) - \sin(\pi x)\cos(\pi z) \\
    \sin(\pi y)\cos(\pi z) - \sin(\pi y)\cos(\pi x) \\
    \sin(\pi z)\cos(\pi x) - \sin(\pi z)\cos(\pi y)
    \end{array}\right),\quad
    p = \pi^3\sin(\pi x)\sin(\pi y)\sin(\pi z)-1.
\end{equation*}
The body force $\bbf$ and the Dirichlet boundary condition are given in the same manner as the two-dimensional example.

\subsubsection{Robustness and accuracy test}

In the two-dimensional tests, we checked that the condition $h<\sqrt{\nu}$ was required to guarantee the optimal order of convergence for the \texttt{ST-EG} method, while the \texttt{PR-EG} method showed a uniform performance in convergence independent of $\nu$.
We obtained the same result as in Figure~\ref{figure: robust_test} from this three-dimensional test.
\begin{table}[ht]
    \centering
    \begin{tabular}{|c||c|c|c|c|c|c|c|c|}
    \hline
        &  \multicolumn{6}{c|}{\texttt{ST-EG}
        } \\
    \cline{2-7}   
      $h$  & {$\trinorm{\bu-\bu_h^{\texttt{ST}}}$} & {\small Order} & { $\sqrt{\nu}\|\mathbf{u}-\mathbf{u}_h^{\texttt{ST}}\|_\mathcal{E}$} & {\small Order} & {$\|\bu-\bu_h^{\texttt{ST}}\|_0$} & {\small Order} \\ 
      \hline
      $1/4$ & 2.105e+0 & -  & 1.379e-2 & - & 4.534e-1 & - \\
      \hline
      $1/8$  & 1.627e+0 & 0.37 & 2.112e-2 & -0.62 & 3.829e-1 & 0.24  \\
      \hline
      $1/16$  & 1.172e+0 & 0.47 & 3.018e-2 & -0.52 & 2.800e-1 & 0.45 \\
      \hline
      $1/32$  & 8.219e-1 & 0.51 & 4.214e-2 & -0.48 & 1.852e-1 & 0.60 \\
      \hline
       &  \multicolumn{6}{c|}{\texttt{PR-EG}
        } \\
    \cline{2-7}   
      $h$  & {$\trinorm{\bu-\bu_h^{\texttt{PR}}}$} & {\small Order} & { $\sqrt{\nu}\|\mathbf{u}-\mathbf{u}_h^{\texttt{PR}}\|_\mathcal{E}$} & {\small Order} & {$\|\bu-\bu_h^{\texttt{PR}}\|_0$} & {\small Order} \\ 
      \hline
      $1/4$ & 3.738e-1 & -  & 2.684e-3 & - & 1.828e-1 & - \\
      \hline
      $1/8$  & 8.797e-2 & 2.09 & 1.346e-3 & 1.00 & 3.026e-2 & 2.59 \\
      \hline
      $1/16$  & 2.079e-2 &  2.08 & 6.600e-4 & 1.03 & 6.203e-3 & 2.29 \\
      \hline
      $1/32$  & 5.101e-3 & 2.03 & 3.256e-4 & 1.02 & 1.441e-3 & 2.11 \\
      \hline
    \end{tabular}
    \caption{A mesh refinement study for the velocity errors of the \texttt{ST-EG} and \texttt{PR-EG} with $h$ when $\nu=10^{-6}$.}
    \label{table: refine_study_3d_u}
\end{table}
Table~\ref{table: refine_study_3d_u} displays the velocity solutions' energy errors and influential components, comparing the \texttt{PR-EG} method with \texttt{ST-EG} when $\nu=10^{-6}$.
The \texttt{ST-EG} method's energy errors tend to decrease because the dominant $L^2$-errors decrease, but the $H^1$-errors scaled by $\nu$ increase.
These $H^1$-errors may make the energy errors nondecreasing until $h<\sqrt{\nu}=10^{-3}$. 
However, the \texttt{PR-EG} methods guarantee at least first-order convergence for all the velocity errors, showing much smaller errors than the \texttt{ST-EG} method.
This numerical result supports the velocity error estimates in \eqref{eqn: sum_err_esti_st_u} and \eqref{eqn: sum_err_esti_ur_u}, and we expect more accurate solutions from the \texttt{PR-EG} method when $\nu$ is small.
\begin{figure}[!htb]
\centering
\begin{subfigure}{0.4\linewidth}
    \centering
    \includegraphics[width=1\linewidth]{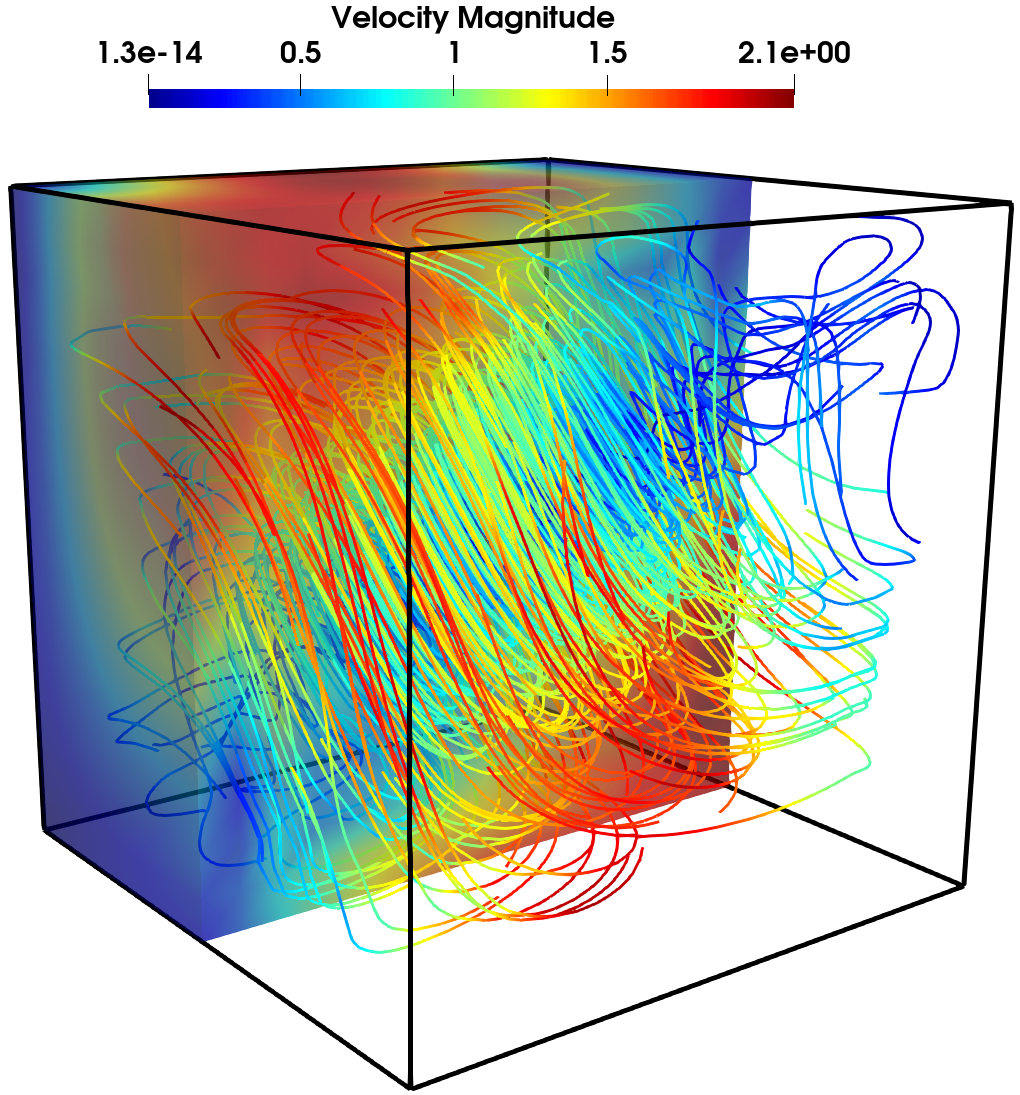}
    \caption{\texttt{ST-EG}: Streamlines and magnitude}
    \end{subfigure}
    \hskip 20pt
    \begin{subfigure}{0.4\linewidth}
    \centering
    \includegraphics[width=1\linewidth]{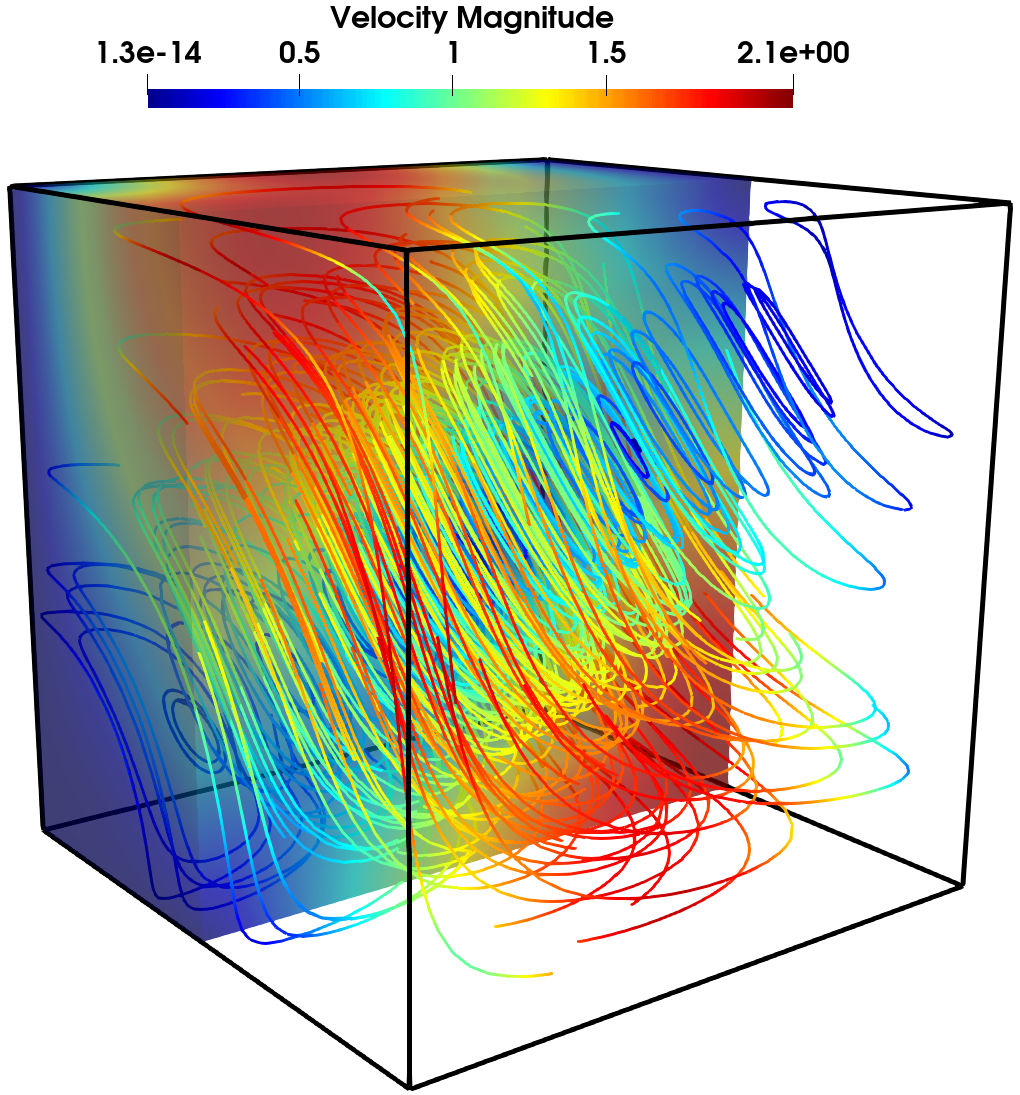}
        \caption{\texttt{PR-EG}: Streamlines and magnitude}
    \end{subfigure}
    \caption{Numerical velocity solutions of \texttt{ST-EG} and \texttt{PR-EG} when $\nu=10^{-6}$ and $h=1/16$.}
    \label{figure: nume_sol_3d}
\end{figure}
In addition, we compare numerical velocity solutions of the \texttt{ST-EG} and \texttt{PR-EG} methods when $\nu=10^{-6}$ and $h=1/16$ in Figure~\ref{figure: nume_sol_3d}.
The velocity solutions of both methods seem to capture a three-dimensional vortex flow expected from the exact velocity.
However, the velocity of the \texttt{ST-EG} method contains noises around the right-top and left-bottom corners, where the streamlines do not form a circular motion.

\begin{table}[ht]
    \centering
    \begin{tabular}{|c||c|c|c|c|c|c|c|c|}
    \hline
        &  \multicolumn{4}{c|}{\texttt{ST-EG}
        } &  \multicolumn{4}{c|}{\texttt{PR-EG}
        } \\
    \cline{2-9}   
      $h$  & {\small $\|\Pz p-p_h^{\texttt{ST}}\|_0$} & {\small Order} & {$\|p-p_h^{\texttt{ST}}\|_0$} & {\small Order} & {\small $\|\Pz p-p_h^{\texttt{PR}}\|_0$} & {\small Order} & {$\|p-p_h^{\texttt{PR}}\|_0$} & {\small Order} \\ 
      \hline
      $1/4$ & 1.346e+0 & -  & 3.262e+0 & - & 1.109e-1 & - & 2.973e+0 & -\\
      \hline
      $1/8$  & 4.983e-1 & 1.43  & 1.593e+0 & 1.03 & 1.241e-2 & 3.16 & 1.513e+0 & 0.98  \\
      \hline
      $1/16$  & 1.805e-1 & 1.47 & 7.810e-1 & 1.03 & 1.344e-3 & 3.21 & 7.598e-1 & 0.99 \\
      \hline
      $1/32$  & 6.216e-2 & 1.54 & 3.854e-1 & 1.02 & 1.609e-4 & 3.06 & 3.804e-1 & 1.00 \\
      \hline
    \end{tabular}
    \caption{A mesh refinement study for the pressure errors of the \texttt{ST-EG} and \texttt{PR-EG} with $h$ when $\nu=10^{-6}$.}
    \label{table: refine_study_3d_p}
\end{table}
In Table~\ref{table: refine_study_3d_p}, 
as expected in \eqref{eqn: sum_err_esti_st_p}, the \texttt{ST-EG} method's pressure errors decrease in at least first-order.
On the other hand, the \texttt{PR-EG} method's pressure errors, $\norm{\Pz p -p_h^{\mathtt{UR}}}_0$, decrease much faster, showing superconvergence.
This phenomenon is expected by the pressure estimate \eqref{eqn: sum_err_esti_ur_p} when $\nu$ is small.
Moreover, the orders of convergence of the total pressure errors, $\norm{p-p_h}_0$, 
for both methods are approximately one due to the interpolation error.

\subsubsection{Error profiles with respect to $\nu$}

We define error profile functions suitable for the three-dimensional test by determining constants in the estimates \eqref{sys: sum_err_esti_st} and \eqref{sys: sum_err_esti_ur}:

\begin{itemize}
    \item $\displaystyle E_{\bu,3}^\texttt{ST}(\nu):=0.1h\sqrt{\nu}+\frac{h}{\sqrt{\nu+3h^2}}+9h=\frac{0.1}{16}\sqrt{\nu}+\frac{1}{\sqrt{16^2\nu+3}}+\frac{9}{16}$ from \eqref{eqn: sum_err_esti_st_u}
    \item $\displaystyle E_{\bu,3}^\texttt{PR}(\nu):=6h\sqrt{\nu}+0.25h=\frac{6}{16}\sqrt{\nu}+\frac{0.25}{16}$ from \eqref{eqn: sum_err_esti_ur_u},
    \item $\displaystyle E_{p,3}^\texttt{ST}(\nu):=1.5h\nu+h\sqrt{\nu}+2.5h=\frac{1.5}{16}\nu+\frac{1}{16}\sqrt{\nu}+\frac{2.5}{16}$ from \eqref{eqn: sum_err_esti_st_p},
    \item $\displaystyle E_{p,3}^\texttt{PR}(\nu):=2h\nu+0.02h\sqrt{\nu}+0.2h^2 = \frac{2}{16}\nu+\frac{0.02}{16}\sqrt{\nu}+\frac{0.2}{16^2}$ from \eqref{eqn: sum_err_esti_ur_p},
\end{itemize}
where $h=1/16$.
\begin{figure}[!htb]
\centering
\begin{subfigure}{0.4\linewidth}
    \centering
    \includegraphics[width=1\textwidth]{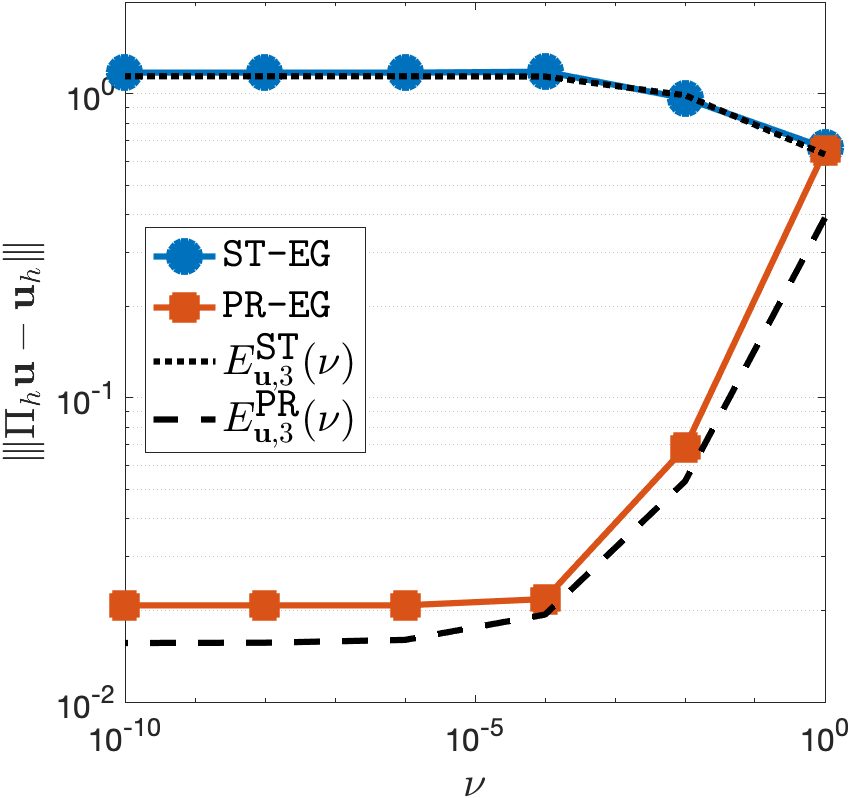}
    \caption{Velocity errors vs. $\nu$}
    \end{subfigure}
    \hskip 20pt
    \begin{subfigure}{0.4\linewidth}
    \centering
    \includegraphics[width=1\textwidth]{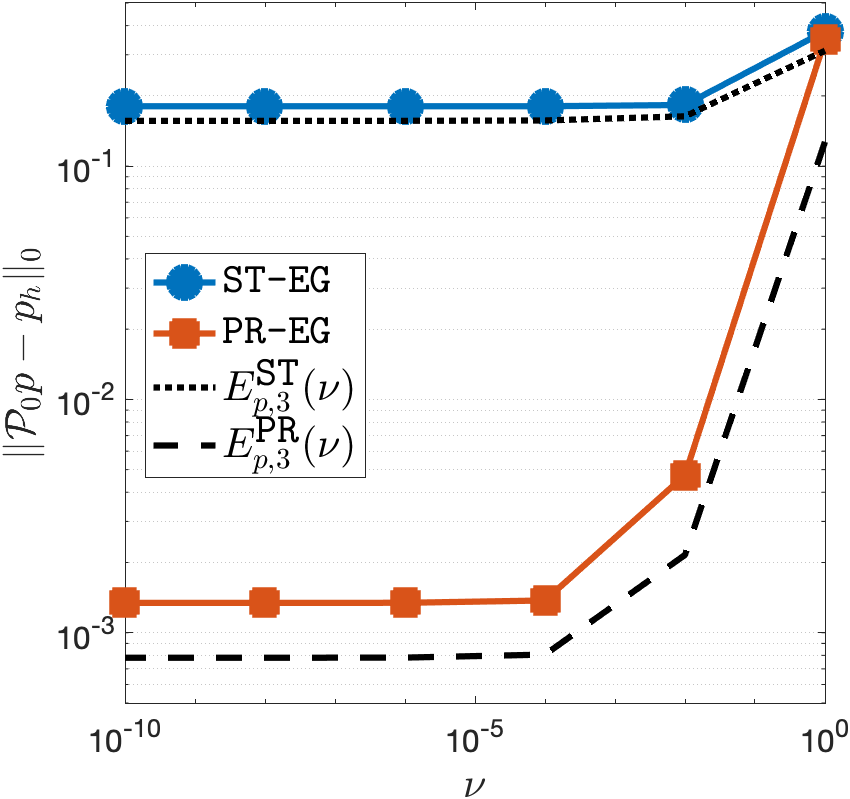}
        \caption{Pressure errors vs. $\nu$}
    \end{subfigure}
    \caption{Error profiles of the \texttt{ST-EG} and \texttt{PR-EG} methods with varying $\nu$ and a fixed mesh size $h=1/16$.}
    \label{figure: errors_prof_3d}
\end{figure}
In Figure~\ref{figure: errors_prof_3d}, the \texttt{PR-EG} method's velocity and pressure errors decrease when $\nu$ changes from 1 to $10^{-4}$ and remain the same when $\nu$ gets smaller.
However, the errors for the \texttt{ST-EG} method slightly increase or decrease when $10^{-4}\leq \nu\leq 1$, and they stay the same as $\nu\rightarrow0$.
Thus, the errors of the \texttt{PR-EG} method are almost 100 times smaller than the \texttt{ST-EG} method when $\nu\leq 10^{-4}$, which means the \texttt{PR-EG} method solves the Brinkman equations with small $\nu$ more accurately.
The error profile functions show similar error behaviors in Figure~\ref{figure: errors_prof_3d}, supporting error estimates \eqref{sys: sum_err_esti_st} and \eqref{sys: sum_err_esti_ur}.

\subsubsection{Permeability test}

We apply piecewise constant permeability to the Brinkman equations \eqref{sys:governing} in the cube domain $\Omega=(0,1)^3$,
\begin{equation*}
    K(\bx) = \left\{\begin{array}{cl}
        10^{-6} & \text{if}\ |\bx|\leq (0.25)^2, \\
        1 & \text{otherwise}.
    \end{array}\right.
\end{equation*}
The other conditions are given as; viscosity $\mu=10^{-6}$, boundary condition $\bu=\langle 1,0,0\rangle$, and body force $\bbf=\langle 1, 1,1\rangle$.
\begin{figure}[!htb]
\centering
\begin{subfigure}{0.4\linewidth}
    \centering
    \includegraphics[width=1\linewidth]{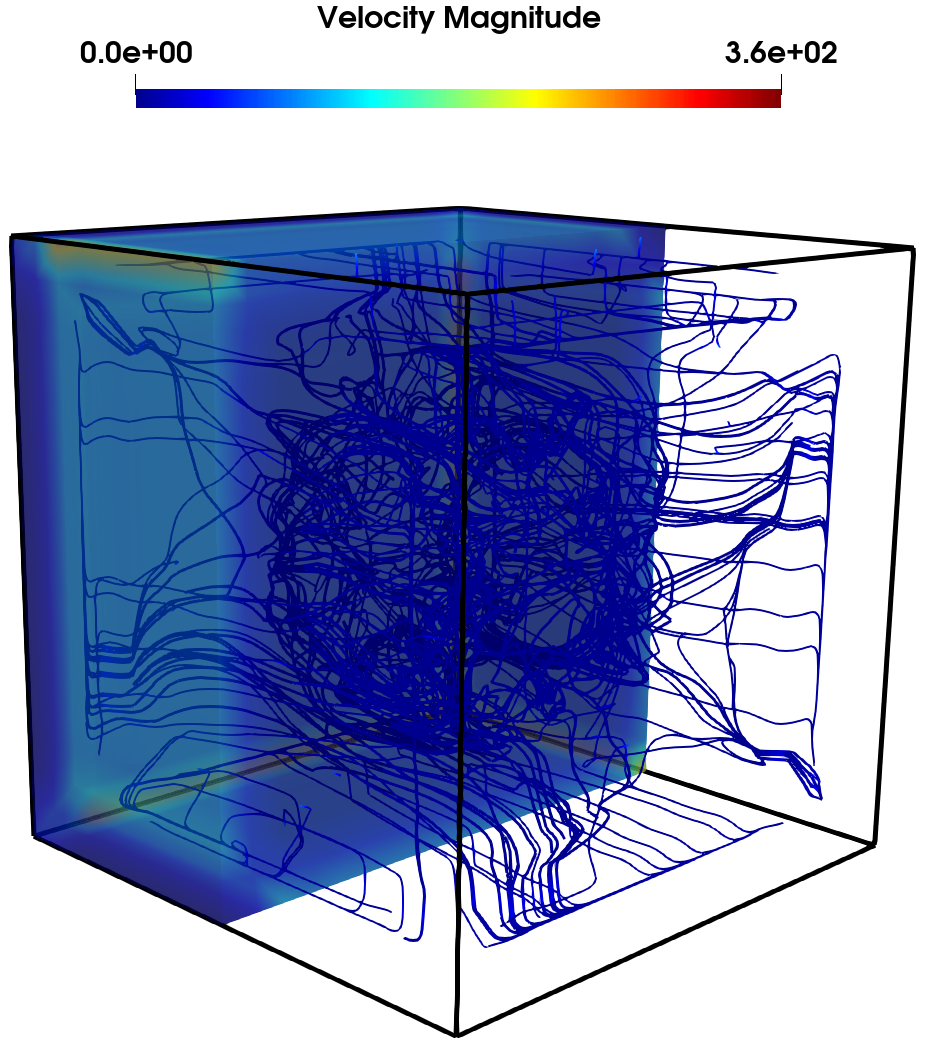}
    \caption{\texttt{ST-EG}: Streamlines and magnitude}
    \end{subfigure}
    \hskip 20pt
    \begin{subfigure}{0.4\linewidth}
    \centering
    \includegraphics[width=1\linewidth]{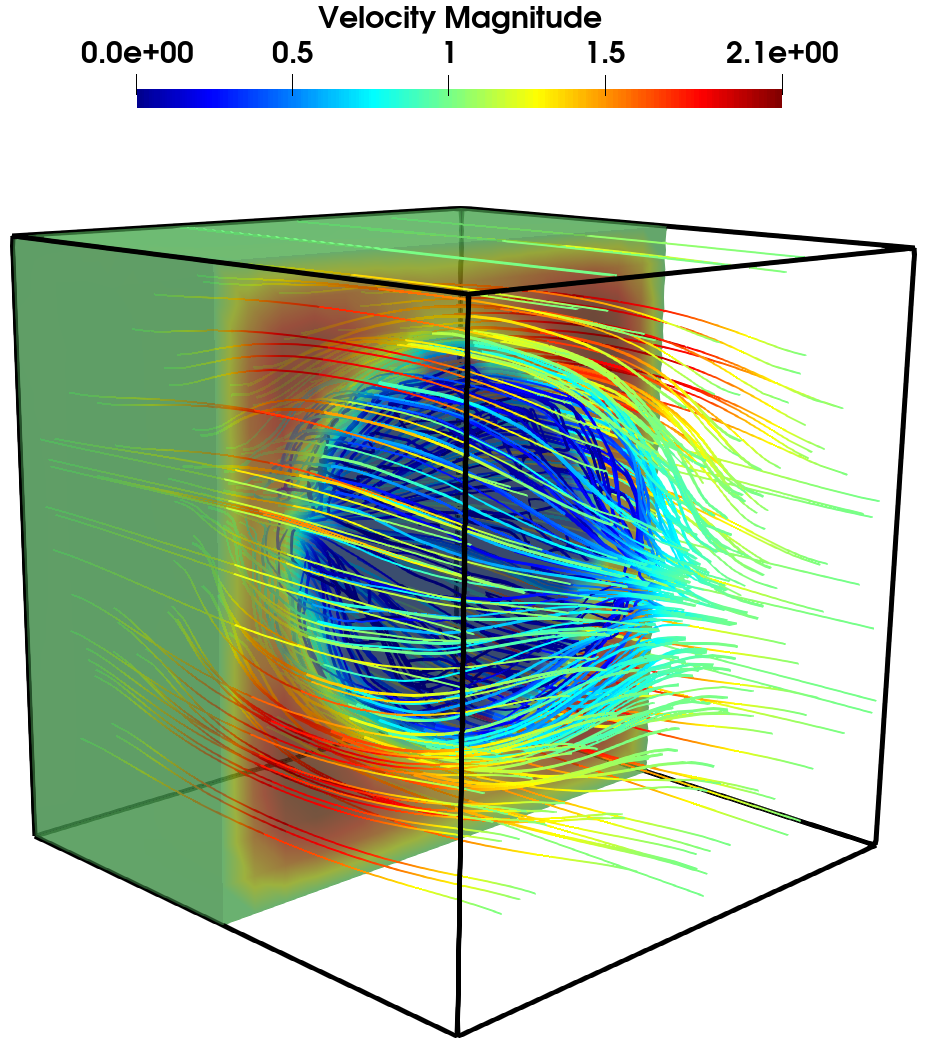}
        \caption{\texttt{PR-EG}: Streamlines and magnitude}
    \end{subfigure}
    \caption{Numerical velocity solutions of \texttt{ST-EG} and \texttt{PR-EG} when $h=1/16$.}
    \label{figure: permeability_3d}
\end{figure}
We expect the fluid flow to be faster out of the ball with small permeability, and it tends to avoid the ball and be affected by the boundary velocity.
The streamlines and colored magnitude of the \texttt{PR-EG} method's velocity in Figure~\ref{figure: permeability_3d} exactly show such an expectation on the fluid flow, while the \texttt{ST-EG} method fails to provide a reliable velocity solution.

%%%%%%%%%%%%%%%%%%%%%%%%%%%%%%%%%%
% 7. Conclusion
%%%%%%%%%%%%%%%%%%%%%%%%%%%%%%%%%%

\section{Conclusion}
\label{sec:conclusion}

In this paper, we proposed a pressure-robust numerical method for the Brinkman equations with minimal degrees of freedom based on the EG piecewise linear velocity and constant pressure spaces \cite{YiEtAl22Stokes}.
To derive the robust method, we used the velocity reconstruction operator \cite{HuLeeMuYi} mapping the EG velocity to the first-order Brezzi-Douglas-Marini space.
Then, we replaced the EG velocity in the Darcy term and the test function on the right-hand side with the reconstructed velocity. With this simple modification, the robust EG method showed uniform performance in both the Stokes and Darcy regimes compared to the standard EG method requiring the
mesh restriction $h<\sqrt{\nu}$ that is impractical in the Darcy regime.
We also validated the error estimates and performance of the standard and robust EG methods through several numerical tests with two- and three-dimensional examples.

Our efficient and robust EG method for the Brinkman equations can be extended to various Stokes-Darcy modeling problems, such as coupled models with an interface and time-dependent models. Also, 
the proposed EG method can be extended for nonlinear models, such as nonlinear Brinkman models for non-Newtonian fluid and unsteady Brinkman-Forchheimer models.

	\bibliographystyle{plain}
%	\newpage
\bibliography{Brinkman}
	
\end{document}